\numberwithin{equation}{section}
\definecolor{purple}{rgb}{0.9,0,0.8}
\definecolor{gray}{rgb}{0.7,0.7,0.7}
\newtheorem{thm}{Theorem}[section]
\newtheorem{cor}[thm]{Corollary}
\newtheorem{lem}[thm]{Lemma}
\newtheorem{prop}[thm]{Proposition}
\newtheorem{dfn}[thm]{Definition}
\theoremstyle{definition}
\newtheorem{rmk}[thm]{Remark}
\newtheorem*{rmk*}{Remark}
\newcommand{\beq}{\begin{equation}}
	\newcommand{\eeq}{\end{equation}}
\newcommand{\N}{\mathbb{N}}
\renewcommand{\setminus}{\backslash}
\newcommand{\eqn}[1]{\begin{equation}#1\end{equation}}
\newcommand{\eqan}[1]{\begin{align}#1\end{align}}
\newcommand{\sss}{\scriptscriptstyle}
\newcommand{\betanV}{\beta_{n,{\sss V}}}
\newcommand{\betanS}{\beta_{n,{\sss S}}}
\newcommand{\betanT}{\beta_{n,{\sss T}}}
\newcommand{\constant}{a}
\newcommand{\ellalt}{\ell}
\newcommand{\TG}{T_{\sss G}}
\newcommand{\psin}{\psi_n}
\newcommand{\nn}{\nonumber}
\newcommand{\dist}{{\rm dist}}
\newcommand{\Gstar}{\mathscr{G}^\star}
\begin{document}

\title{Sparse random graphs with many triangles}

\author[S.\ Chakraborty]{Suman Chakraborty}
\address{Department of Mathematics and Computer Science,  Eindhoven University of Technology, Eindhoven, Netherlands}
\email{contact@sumanc.com}

\author[R.\ van der Hofstad]{Remco van der Hofstad}
\address{Department of Mathematics and Computer Science,  Eindhoven University of Technology, Eindhoven, Netherlands}
\email{rhofstad@win.tue.nl}

\author[F. \ den Hollander]{Frank den Hollander}
\address{Mathematical Institute,  Leiden University, Leiden, Netherlands}
\email{denholla@math.leidenuniv.nl}

\date{\today}
\subjclass[1991]{Primary: 
05C80, 
60F10,  
Secondary:
82B43.  
}

\keywords{Random graphs, Nonlinear large deviations, Exponential random graphs}

\begin{abstract}
In this paper we consider the Erd\H{o}s-R\'enyi random graph in the sparse regime in the limit as the number of vertices $n$ tends to infinity. We are interested in what this graph looks like when it contains many triangles, in two settings. First, we derive asymptotically sharp bounds on the probability that the graph contains a large number of triangles. We show that, conditionally on this event, with high probability the graph contains an almost complete subgraph, i.e., the triangles form a near-clique, and has the {\em same} local limit as the original Erd\H{o}s-R\'enyi random graph. Second, we derive asymptotically sharp bounds on the probability that the graph contains a large number of vertices that are part of a triangle. If order $n$ vertices are in triangles, then the local limit (provided it exists) is {\em different} from that of the Erd\H{o}s-R\'enyi random graph. Our results shed light on the challenges that arise in the description of real-world networks, which often are {\em sparse, yet highly clustered}, and on exponential random graphs, which often are used to model such networks.  
\end{abstract}

\maketitle


\section{Introduction and main results}
\label{S1}


\subsection{Background and motivation}
\label{back}

Let $G_{n,p_n}$ be the Erd\H{o}s-R\'{e}nyi random graph on $[n]:=\{1,\ldots,n\}$ vertices, where each edge is present with probability $p_n$ independently of each other. In this article we consider the sparse regime: $p_n=\lambda/n$ for some fixed $\lambda>0$. (We call a graph {\em sparse} when the expected number of edges is of the same order as the number of vertices in the graph.)

\medskip
\paragraph{\bf Triangles.}
It is well-known that a typical outcome of $G_{n,p_n}$ with $p_n=\lambda/n$ has roughly $\lambda n/2$ many edges and is locally tree-like, i.e., for each fixed $r\in \N$, the proportion of vertices whose $r$-neighbourhood contains a cycle vanishes with high probability (see, for example, Bollob\'as \cite{bollobas2001random}, Frieze and Karo\'nski \cite{frieze2016introduction}, Janson, Ruci{\'n}ski and  Luczak \cite{janson2011random}, van der Hofstad \cite{van2017random}). Here `with high probability' means with probability tending to $1$ as $n\to\infty$. On the other hand, many networks observed in practice are far from locally tree-like (see Newman \cite{newman2009random} and the references therein). In particular, they often contain a large number of triangles. This motivates our first question: \emph{How unlikely is it to observe a large number of triangles in a typical outcome of $G_{n,p_n}$, and what does the graph looks like if we condition $G_{n,p_n}$ to have a large number of triangles?}

In this article we derive asymptotically sharp upper and lower bounds on the logarithmic probability that $G_{n,p_n}$ contains a large number of triangles. Furthermore, we show that if we condition $G_{n,p_n}$ to have a large number of triangles, then with high probability it contains a small `almost-complete' subgraph, while a typical outcome is still locally tree-like. This is rather disappointing, as in practice a substantial number of vertices of the graph are part of some triangle (Newman \cite{newman2009random}, Rapoport \cite{rapoport1948cycle}, Serrano and Bogu{\~n}a \cite{serrano2006clustering}, Watts and Strogatz \cite{watts1998collective}). This motivates our second question: \emph{What is the probability that a typical outcome of $G_{n,p_n}$ contains a large number of vertices that are part of some triangle?} We derive sharp upper and lower bounds on the logarithmic upper tail probability of this event as well. Although we have not been able to prove structural results for this setting, our proof of upper and lower bounds is based on the Ansatz that if $S$ is the subset of vertices of $G_{n,p_n}$ that are part of some triangle, then $S$ contains a subgraph that has approximately $|S|/3$ vertex-disjoint triangles.    

\medskip
\paragraph{\bf Subgraphs.}
For a small fixed graph $H$, let $N(H,G)$ be the number of copies of $H$ in $G$. For various values of $p_n$, the distribution of $N(H,G_{n,p_n})$ has been studied extensively. The first results appeared in the seminal paper by Erd\H{o}s and R\'{e}nyi \cite{erdos1960evolution}. For $H$ a small complete graph, the probability of $\{N(H,G_{n,p_n})>0\}$ was studied by Sch{\"u}rger \cite{schurger1979limit}. This work was later extended to other subgraphs by Bollob\'{a}s in \cite{bollobas1981threshold}. Ruci\'{n}ski \cite{rucinski1988small} obtained a necessary and sufficient condition for {\em asymptotic normality} of $N(H,G_{n,p_n})$. 

Sharp estimates of probabilities of the form $\mathbb{P}\left(N(H,G_{n,p_n}) \leq (1-\varepsilon)\mathbb{E}[N(H,G_{n,p_n})]\right)$ were obtained in Janson, Ruci{\'n}ski, and Luczak \cite{janson2011random}. For a long time, no sharp estimate was available on `upper tail' probabilities of the form $\mathbb{P}\left(N(H,G_{n,p_n}) \geq (1+\varepsilon)\mathbb{E}[N(H,G_{n,p_n})]\right)$. When $p \in (0,1)$ is independent of $n$, asymptotically sharp estimates of $\log{\mathbb{P}\left(N(H,G_{n,p}) \geq (1+\varepsilon)\mathbb{E}[N(H,G_{n,p})]\right)}$ were obtained by Chatterjee and Varadhan \cite{chatterjee2011large} using the theory of dense graph limits \cite{lovasz2012large}, and Szemer\'{e}di’s regularity lemma \cite{szemeredi1975regular}. Chatterjee and Varadhan \cite{chatterjee2011large}, and Lubetzky and Zhao \cite{lubetzky2015replica} studied the structure of $G_{n,p}$ conditioned on large subgraph counts as well. Due to the poor quantitative estimates in Szemer\'{e}di’s regularity lemma, the arguments in \cite{chatterjee2011large} did not extend easily when $p=p_n$ tends to zero as $n\to \infty$ faster than a negative power of $\log{n}$ (even extending the arguments in \cite{chatterjee2011large} to the regime when $p_n$ is a negative power of $\log{n}$ required a weaker version of the Szemer\'{e}di’s regularity lemma; see \cite{lubetzky2017variational}). 
  
In the regime where $(\log n)/n \ll p_n \ll1$, initial attempts using standard concentration inequalities (such as Hoeffding \cite{hoeffding1994probability}, Talagrand \cite{talagrand1995concentration}, Kim and Vu \cite{kim2000concentration}) yielded estimates that were far from optimal. The estimates obtained with these approaches appeared in the article appropriately titled `The infamous upper tail' by Janson and Ruci{\'n}ski \cite{janson2002infamous}. In Janson, Oleszkiewicz, and Ruci{\'n}ski \cite{janson2004upper}, an upper and a lower bound on $\log{\mathbb{P}\left(N(H,G_{n,p_n}) \geq (1+\varepsilon)\mathbb{E}[N(H,G_{n,p_n})]\right)}$ were obtained that differed by a multiplicative factor $\log(1/p_n)$.

When $H=K_r$ is an $r$-clique (complete subgraph on $r$ vertices), the mis-matching factor $\log(1/p_n)$ was removed by Chatterjee \cite{chatterjee2012missing} and by DeMarco and Kahn \cite{demarco2012tight}, who proved the correct order of magnitude of the logarithmic upper tail probability. Finally, after a series of improvements (Augeri \cite{augeri2018nonlinear}, Chatterjee and Dembo \cite{chatterjee2016nonlinear}, Cook and Dembo \cite{cook2020large}, Eldan \cite{eldan2018gaussian}), Harel, Mousset, and Samotij \cite{harel2019upper} approximated $\log{\mathbb{P}\left(N(K_r,G_{n,p_n})\right)}$ in terms of a variational problem for $(\log n)/n \ll p_n \ll1$. The variational problem was independently solved by Lubetzky and Zhao \cite{lubetzky2017variational} yielding the estimate
	\begin{equation}
	\label{eqn:239pm09sep21}
	\log{\mathbb{P}\left(N(K_r,G_{n,p_n}) \geq (1+\varepsilon)\mathbb{E}[N(K_r,G_{n,p_n})]\right)} \approx -C(\varepsilon)n^2p_n^2 \log(1/p_n),
	\end{equation}
for $(\log n)/n \ll p_n \ll1$, where $C(\varepsilon)>0$ is a constant. 

In their attempt to prove estimates of the form \eqref{eqn:239pm09sep21}, Chatterjee and Dembo \cite{chatterjee2016nonlinear} initiated the study of {\em non-linear large deviations}, i.e., large deviations of non-linear functions of i.i.d.\ Bernoulli random variables. Since then the study of non-linear large deviations has received vast attention (see Augeri \cite{augeri2018nonlinear}, Basak and Basu \cite{basak2019upper}, Chatterjee and Dembo \cite{chatterjee2016nonlinear}, Cook and Dembo \cite{cook2020large}, Eldan \cite{eldan2018gaussian}, Harel, Mousset, and Samotij \cite{harel2019upper}). These works also consider graphs other than $K_r$, and the related variational problem was solved in Bhattacharya, Ganguly, Lubetzky, and Zhao \cite{bhattacharya2017upper}.  

To our knowledge, none of the above works considers the sparse regime $p=p_n=\lambda/ n$, which is the most challenging and is treated in the present paper.

\medskip
\paragraph{\bf Back to triangles.}
It is well-known that, in the sparse regime, in $G_{n,p_n}$ the number of triangles $T=N(K_3,G_{n,p_n})$ converges in law to a Poisson random variable with mean $\lambda^3/6$. In our first main result, we derive sharp upper and lower bounds on the probability of rare events of the form $\{T\geq k_n\}$, where $(k_n)_{n\in\N}$ is a sequence of positive real numbers that tend to infinity. Roughly, we show that, as long as $k_n$ is growing fast enough,
	\begin{equation}
	\label{eqn:407pm09sep21}
	\log\mathbb{P}(T \geq k_n)\approx - \tfrac12[6k_n]^{2/3} \log(1/p_n).
	\end{equation} 
The {\em lower bound} on $\mathbb{P}(T \geq k_n)$ expressed by \eqref{eqn:407pm09sep21} comes from the observation that if $[(6k_n)^{1/3}]$ forms a clique, then we must have $T\geq k_n$. The probability that $[(6k_n)^{1/3}]$ forms a clique is roughly $\exp{\left(- \tfrac12[6k_n]^{2/3} \log(1/p_n)\right)}$. The matching {\em upper bound} expressed by \eqref{eqn:407pm09sep21} is more difficult, and to derive it we adapt the theory of non-linear large deviations developed in Harel, Mousset, and Samotij \cite{harel2019upper}. As mentioned before, the study of small subgraphs is a classical topic in random graph theory. In particular, the estimate \eqref{eqn:407pm09sep21} is a new contribution towards the understanding of the rare event that `an unusually large number of triangles appears in a sparse random graph'.

In this regard, Collet and Eckmann \cite{collet2002number} considered a closely related problem. We need some notation to discuss their result. For $\alpha, \beta>0$, let  $\mathcal{G}(n,\alpha, \beta)$ be the set of graphs on $n$ vertices, $\alpha n$ edges, and $\beta n$ triangles, and let $\mathcal{G}(n,\alpha)$ be the set of graphs on $n$ vertices, and $\alpha n$ edges. Collet and Eckmann \cite{collet2002number} showed that $\log{|\mathcal{G}(n,\alpha, \beta)|}\approx \log{|\mathcal{G}(n,\alpha)|}$. Note that when $\alpha$ is independent of $n$, $\log{|\mathcal{G}(n,\alpha)|}$ is of order $n\log{n}$. This suggests that it might be possible to show that $-\log\mathbb{P}(T \geq \beta n) = o(n \log{n})$ but we were unable to improve this further using the techniques in \cite{collet2002number}, whereas, using \eqref{eqn:407pm09sep21} we can obtain much sharper estimate, that is, 
$$
\log\mathbb{P}(T \geq \beta n) \approx - \tfrac12[6\beta n]^{2/3} \log(n).
$$ 
Motivated by their results the authors in \cite{collet2002number} suggested that ``the triangles seem to cluster even at low density". As far as we know the present paper is the first one to provide rigorous, and quantitative, evidence to this claim in the sparse regime. We describe these in the following paragraph.       

Equation \eqref{eqn:407pm09sep21} provides a rough answer to the question how unlikely it is that a sparse graph with a large number of triangles is actually an outcome of a sparse Erd\H{o}s-R\'{e}nyi graph. We also provide a description of the random graph {\em conditionally} on the event $\{T\geq k_n\}$. Roughly, this asserts that with high probability $G_{n,p_n}$ contains a subgraph $G$ with approximately $(6k_n)^{2/3}/2$ edges with minimum degree approximately $(6k_n)^{1/3}$, which suggests that $G$ is an `almost complete' subgraph. Although this provides structural information of $G_{n,p_n}$ conditionally on $\{T\geq k_n\}$, we also show that this does not affect the {\em local structure} of the graph. Indeed, conditionally on $\{T\geq k_n\}$, $G_{n,p_n}$ still converges locally in probability to the same Galton-Watson tree as the original Erd\H{o}s-R\'{e}nyi random graph. Unfortunately, this is generally not the case in applications (for example, Newman \cite{newman2009random} proposed a random graph model where each vertex $i \in [n]$ participates in $t_i$-triangles and $s_i$-edges that are not part of any triangle). 

The above failure motivates us to study the number of vertices that are part of some triangle in $G_{n,p_n}$, denoted  by $V_{\sss T}(G_{n,p_n})$. We show that, for appropriately growing $k_n$,
	\begin{equation}
	\label{eqn:415pm10sep21}
	{\log \mathbb{P}(V_{\sss T}(G_{n,p_n})\geq k_n)} \approx-\frac{1}{3}k_n \log k_n.
	\end{equation}   
As far as we know, this estimate is the first of its kind. To see where it comes from, note that if, in a graph $G$, there is a set $S \subseteq [n]$ with $|S|=k_n$ such that $S$ is an union of vertex-disjoint triangles, no other edge is present in $[S]$, and $V\setminus S$ is triangle free, then clearly $V_{\sss T}(G) \geq k_n$, and it is relatively straightforward to obtain a lower bound on $\mathbb{P}(V_{\sss T}(G_{n,p_n})\geq k_n)$. However, $V_{\sss T}(G)$ does not seem to be a bounded-degree polynomial function of the entries of the adjacency matrix of $G$. Therefore, unlike $T$, the theory of non-linear large deviations in Harel, Mousset, and Samotij \cite{harel2019upper} is not readily applicable in this case.  To derive an upper bound we instead develop some new combinatorial tools, in particular, a {\em graph decomposition lemma} that we believe to be of independent interest. Shortly after we posted this paper on arXiv, Ganguly, Hiesmayr, and Nam \cite{ganguly2024upper} published a paper addressing the same problem. In particular, while our paper estimates the probability $\mathbb{P}(V_{\sss T}(G_{n,p_n}) \geq k_n)$ in the regime $k_n \geq (\log n)^3$, their paper estimates this probability in the regime $k_n \leq n^{1/10-\delta}$ for some $\delta > 0$. As noted in \cite{ganguly2024upper}, our work and theirs together settle the long-standing ``upper tail problem'' for the number of triangles in the case where the average degree is constant.

\medskip
\paragraph{\bf Real-world networks.} 
Large-number-of-triangles events have practical importance. Many networks observed in practice are sparse, while at the same time they exhibit transitivity (also called clustering), in the sense that two neighbors of the same vertex are more likely to also be neighbors of one another. (See Newman \cite{newman2009random}, Rapoport \cite{rapoport1948cycle}, Serrano and Bogu{\~n}a \cite{serrano2006clustering}, Watts and Strogatz \cite{watts1998collective} for more on this topic.) As a result, these graphs contain a large number of triangles, and many vertices in them lie in triangles. This is another motivation behind \eqref{eqn:407pm09sep21} and \eqref{eqn:415pm10sep21}. Large-deviation-type estimates of the form \eqref{eqn:407pm09sep21} and \eqref{eqn:415pm10sep21} are sometimes helpful to study the corresponding exponential random graph models. In dense settings, this connection has been investigated in Chatterjee and Diaconis \cite{chatterjee2013estimating}, Chatterjee and Dembo \cite{chatterjee2016nonlinear}, Bhamidi, Chakraborty, Cranmer and Desmarais \cite{bhamidi2018weighted}. Some potential implications of our results regarding exponential random graph models are discussed in Section \ref{sec:dis125pm19oct21}. In particular, we propose a new exponential random graph model, evaluate its normalising constant, and show that this model undergoes an intriguing phase transition. 

\medskip
\paragraph{\bf Organisation of this section.} Section~\ref{back} has provided us with the background and the motivation for the paper. Section~\ref{S2} introduces notation. In Section \ref{sec-many-triangles}, we estimate the probability for the Erd\H{o}s-R\'{e}nyi random graph to have many triangles. In Section \ref{sec-structure-many-triangles}, we study the structure of the graph conditionally on this event. In Section \ref{sec-many-vertices-in-triangles}, we estimate the probability for the Erd\H{o}s-R\'{e}nyi random graph to have many vertices in triangles. Sections \ref{sec-many-triangles}--\ref{sec-many-vertices-in-triangles} contain quick previews of the structure of the proofs of the main results presented there, which are written out in Sections \ref{S3}--\ref{S8}. We close in Section \ref{sec:dis125pm19oct21} with a discussion, formulating relevant extensions and open problems, addressing the relevance of our results for real-world networks, and giving a brief outline of the remainder of the paper.


\subsection{Introductory notation}
\label{S2}

A graph $G=(V,E)$ consists of a set of vertices $V=V(G)$ and a set of edges $E=E(G)$ connecting pairs of vertices. Abbreviate
$e_{\sss G} = |E(G)|$. 

In the complete graph on $n$ vertices, written $K_n$, all possible edges are present. In the Erd\H{o}s-R\'{e}nyi random graph on $n$ vertices with retention probability $p \in [0,1]$, written $G_{n,p}$, edges are present with probability $p$ and absent with probability $1-p$, independently for different edges. The law of $G_{n,p}$ is written $\mathbb{P}$. We denote the law of $G_{n,p}$ conditionally on the event that it contains a given graph $G$ with $V(G)\subseteq [n]$ by
	\eqn{
	\label{law-induced}
	\mathbb{P}_{\sss G}(\cdot) = \mathbb{P}(\,\cdot \mid G_{n,p} \supseteq G).
	}
Further, we let $\mathbb{E}_{\sss G}$ denote the (conditional) expectation w.r.t.\ $\mathbb{P}_{\sss G}$.

\begin{rmk}{\bf [Induced subgraph]}
\label{rem-induced}
	For a graph $G$ with $V(G)\subseteq [n]$, $G_{n,p} \supseteq G$ in \eqref{law-induced} simply means that the induced subgraph of $G_{n,p}$ on the vertex set $V(G)$ contains $G$ as a subgraph.\hfill$\blacksquare$
\end{rmk}
For a graph $G=(V,E)$, and $U\subseteq V$, $G[U]$ will denote the subgraph induced in $G$ by $U$. For two sequences of non-negative real numbers $(a_n)_{n\in\N}$ and $(b_n)_{n\in\N}$, $a_n=o(b_n)$ will mean $\lim_{n\rightarrow \infty}a_n/b_n=0$. For any two sequences $(a_n)_{n\in\N}$ and $(b_n)_{n\in\N}$, $a_n\approx b_n$ means $\lim_{n\rightarrow \infty}a_n/b_n=1$.


\subsection{Probability to have many triangles}
\label{sec-many-triangles}

Let $T$ be the number of triangles in $G_{n,p}$. In what follows, we fix $\lambda>0$, and put
	\eqn{
	\label{pn-vepn-def}
	p_n = \frac{\lambda}{n}, \qquad \varepsilon_n = k_n^{-2/3},
	}
where $(k_n)_{n\in\N}$ is a sequence of positive reals that can be chosen freely such that $\lim_{n\to\infty} k_n=\infty$. In order to state our main results on the large deviations for the number of triangles, we state the relevant {\em variational problem.} For $n\in\N$, $p \in (0,1)$ and $\constant,k>0$, define
\begin{equation}
\label{eqn:1054pm10mar21}
\Phi_{n,p,k}(\constant) := \min\{e_{\sss G}\log(1/p)\colon\, G \subseteq K_n,\,\mathbb{E}_{\sss G}(T)\geq \constant k\}.
\end{equation}
In words, $\mathrm{e}^{-\Phi_{n,p,k}(\constant)}$ is the probability that the expected number of triangles is at least $\constant k$ conditional on $G$ being present in $G_{n,p}$.

The following theorem provide asymptotically sharp upper and lower bounds on the probability of the event that $G_{n,p_n}$ contains at least $\constant k_n$ triangles:

\begin{thm}{\bf [Large deviations for the number of triangles]}
\label{thm-number-triangles}
Suppose that $\lim_{n\to\infty} \varepsilon_n = 0$. Fix a sequence of positive reals $(w_n)_{n\in\N}$ such that $\lim_{n\to\infty} w_n = 0$, and such that $w_nk_n \geq {n \choose 3} (p_n)^C$ for some constant $C>0$, $\lim_{n\to\infty} (\log{n})/(w_n^2 k_n^{1/3}) = 0$ and $\lim_{n\to\infty}(\log w_n)/\log(1/p_n)=0$. Then, for $n$ large enough,
	\eqn{
	\label{LD-many-triangles}
	- (1+\varepsilon_n)\Phi_{n,p_n,k_n}(\constant +w_n)\leq \log \mathbb{P}(T\geq \constant k_n) 
	\leq - (1-\psi_n)\Phi_{n,p_n,k_n}(\constant -w_n),
	}
for a sequence $\psi_n = o(1)$, where, for $\lim_{n\to\infty}$ $w_nk_n^{1/3}=\infty$,
	\eqn{
	\label{Phi-asymptotics}
	\tfrac12[6\constant(1-w_n)k_n]^{2/3} \log(1/p_n) \leq \Phi_{n,p_n,k_n}(\constant) 
	\leq \tfrac12[6\constant(1+w_n)k_n]^{2/3} \log (1/p_n).
	}
\end{thm}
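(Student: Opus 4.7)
I would prove the probability bounds \eqref{LD-many-triangles} and the variational asymptotics \eqref{Phi-asymptotics} separately, since the first reduces the probabilistic problem to the combinatorial one and the second evaluates the latter. For the \textbf{lower bound} in \eqref{LD-many-triangles}, I use a planting argument. Let $G^\star$ (nearly) attain the minimum in $\Phi_{n,p_n,k_n}(\constant+w_n)$, so $\mathbb{E}_{G^\star}(T)\geq (\constant+w_n)k_n$ and $e_{G^\star}\log(1/p_n)\approx \Phi_{n,p_n,k_n}(\constant+w_n)$. Then
\[
\mathbb{P}(T\geq \constant k_n) \;\geq\; \mathbb{P}(G_{n,p_n}\supseteq G^\star)\,\mathbb{P}_{G^\star}(T\geq \constant k_n) \;=\; p_n^{e_{G^\star}}\,\mathbb{P}_{G^\star}(T\geq \constant k_n),
\]
whose first factor equals $\exp(-\Phi_{n,p_n,k_n}(\constant+w_n))$; hence it suffices to show $\mathbb{P}_{G^\star}(T\geq \constant k_n)=1-o(1)$. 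Under $\mathbb{P}_{G^\star}$, the mean of $T$ exceeds the threshold by at least $w_n k_n$, and a Chebyshev bound---whose variance is dominated by pairs of triangles sharing an edge---closes the argument, provided $w_n k_n\geq\binom{n}{3}p_n^C$ keeps the standard deviation below $w_n k_n$; this is precisely what that hypothesis guarantees.

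The \textbf{upper bound} in \eqref{LD-many-triangles} is the main difficulty. I would invoke the nonlinear large-deviations framework of Harel, Mousset and Samotij \cite{harel2019upper}, adapted to the sparse regime $p_n=\lambda/n$. Since $T$ is a polynomial of bounded degree in the independent edge indicators, that theory approximates $-\log\mathbb{P}(T\geq \constant k_n)$ by the infimum of an entropy-type functional which, after a seed-extraction step, becomes $\Phi_{n,p_n,k_n}(\constant-w_n)$; the $-w_n$ shift and the $\psi_n$ error absorb the quantitative losses. The chief obstacle is that HMS was originally formulated for $(\log n)/n\ll p_n\ll 1$ and must be re-examined at $p_n=\lambda/n$; the hypotheses $(\log n)/(w_n^2 k_n^{1/3})\to 0$ and $w_n k_n\geq\binom{n}{3}p_n^C$ are calibrated precisely to provide the slack needed to carry the HMS scheme through the sparse regime.

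For the variational asymptotics \eqref{Phi-asymptotics}, the upper bound on $\Phi_{n,p_n,k_n}(\constant)$ is witnessed by $G=K_m$ with $m=\lceil (6\constant(1+w_n)k_n)^{1/3}\rceil$: then $\mathbb{E}_G(T)\geq \binom{m}{3}\geq \constant(1+w_n)k_n$ and $e_G=\binom{m}{2}\leq \tfrac12[6\constant(1+w_n)k_n]^{2/3}(1+o(1))$, the rounding error being absorbed into $w_n$ precisely when $w_n k_n^{1/3}\to\infty$. For the matching lower bound, given admissible $G$ (which by the upper bound we may assume has $e_G\lesssim k_n^{2/3}$) I decompose
\[
\mathbb{E}_G(T)=T_3(G)+T_2(G)p_n+T_1(G)p_n^2+T_0(G)p_n^3,
\]
where $T_j(G)$ counts triangles in $K_n$ with exactly $j$ edges in $G$. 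The crude estimates $T_0(G)\leq\binom{n}{3}$, $T_1(G)\leq e_G(n-2)$ and $T_2(G)\leq\sum_v\binom{d_v}{2}$ show that the last three terms contribute only $o(\constant k_n)$ when $p_n=\lambda/n$, forcing $T_3(G)=T(G)\geq \constant(1-w_n)k_n$. The Kruskal--Katona theorem then yields $e_G\geq \tfrac12[6T(G)]^{2/3}(1-o(1))$, and multiplying by $\log(1/p_n)$ completes \eqref{Phi-asymptotics}.
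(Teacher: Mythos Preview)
Your high-level strategy matches the paper's: plant a near-optimal subgraph for the lower bound, invoke the Harel--Mousset--Samotij seed/core machinery for the upper bound, and bound $\Phi$ via a clique construction and an edge--triangle inequality. Two points deserve comment.

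\medskip
\textbf{Lower bound.} Your Chebyshev step is not how the paper proceeds, and your justification misreads the hypothesis. The condition $w_nk_n\geq\binom{n}{3}p_n^C$ is \emph{not} a variance control; it is there so that the crude first-moment bound
\[
\mathbb{E}_{G^\star}(T)\leq \constant k_n+\tbinom{n}{3}\mathbb{P}_{G^\star}(T\geq \constant k_n)
\quad\Longrightarrow\quad
\mathbb{P}_{G^\star}(T\geq \constant k_n)\geq \frac{w_nk_n}{\binom{n}{3}}\geq p_n^C
\]
yields a \emph{polynomial} (not $1-o(1)$) lower bound on the conditional probability. Taking logs, this contributes only $O(\log(1/p_n))$, which is absorbed into the $(1+\varepsilon_n)$ prefactor because separately one checks $e_{G^\star}\gtrsim k_n^{2/3}$ (via the same edge--triangle inequality) and recalls $\varepsilon_n=k_n^{-2/3}$. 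Your Chebyshev route may well succeed after a genuine variance computation under $\mathbb{P}_{G^\star}$, but the hypothesis you cite does not by itself bound the standard deviation by $w_nk_n$; you would need to supply that estimate.

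\medskip
\textbf{Variational lower bound.} The paper does not use Kruskal--Katona but the elementary Cauchy--Schwarz inequality $6T(G)\leq (2e_G)^{3/2}$ (Lemma~\ref{lem:1109pm24mar21}), which already gives the needed implication. Your appeal to Kruskal--Katona is a valid and essentially equivalent alternative. One small wrinkle: you write ``which by the upper bound we may assume has $e_G\lesssim k_n^{2/3}$''. For the lower bound on $\Phi$ there is nothing to assume; one argues by contradiction that $e_G<\tfrac12[6\constant(1-w_n)k_n]^{2/3}$ would force $\mathbb{E}_G(T)<\constant k_n$, using $N(K_{1,2},G)p_n\leq \lambda e_G$ and $T(G)\leq \tfrac16(2e_G)^{3/2}$.
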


\begin{rmk}{\bf [Structure of the proof of Theorem \ref{thm-number-triangles}]}
\label{rmk-proof-number-triangles}
The proof of Theorem \ref{thm-number-triangles} is given in three theorems stated separately below:\\ 
(a) The lower bound in \eqref{LD-many-triangles} in Theorem \ref{thm-number-triangles} is stated more generally in Theorem \ref{thm:437pm28apr21} in Section~\ref{S3}. In Theorem \ref{thm:437pm28apr21}, we estimate the probability $\mathbb{P}(T\geq \constant k_n)$ for a sequence of positive real numbers $k_n\to \infty$ as $n\to \infty$, and the estimate is expressed in terms of a perturbation of the variational formula given in \eqref{eqn:1054pm10mar21}. 
\\
(b) The upper bound in \eqref{LD-many-triangles} in Theorem \ref{thm-number-triangles} is stated more generally in Theorem \ref{thm:607pm22mar21} in Section \ref{S4}, and we refer to there for a more detailed discussion. For the upper bound we require the condition $\lim_{n\to\infty} (\log{n})/$ $(w_n^2 k_n^{1/3}) = 0$, which implies that we need $k_n$ to grow faster than $(\log n)^3$. On the other hand, the restriction on the perturbation term $w_n$ is somewhat more restrictive. We also need $\lim_{n\to\infty}(\log w_n)/$ $\log(1/p_n)=0$, which implies that $w_n$ can not converge to zero faster than $n^{-C}$ for any $C>0$. 
\\
(c) By monotonicity, the bounds imply the simpler statement that there exists a $\delta_n=o(1)$ such that
	\eqn{
	\label{LD-many-triangles-b}
	- (1+\delta_n)\Phi_{n,p_n,k_n}(\constant +\delta_n)\leq \log \mathbb{P}(T\geq \constant k_n) 
	\leq - (1-\delta_n)\Phi_{n,p_n,k_n}(\constant -\delta_n),
	}
which is obtained by taking $\delta_n=\min\{\varepsilon_n, w_n, \psi_n\}$. This weaker statement hides the dependencies between the various perturbations. Note that we can insert $\constant=1$ to obtain an even simpler estimate.
\\
(d) The asymptotics of the variational problem in \eqref{eqn:1054pm10mar21}, as described in \eqref{Phi-asymptotics}, is stated in Theorem \ref{thm:944pm28apr21} in Section \ref{S5}.\hfill$\blacksquare$
\end{rmk}

\begin{rmk}{\bf[Conditions of Theorem \ref{thm-number-triangles}]}
	The conditions in Theorem \ref{thm-number-triangles} ensure that the upper and lower bounds in \eqref{LD-many-triangles} are asymptotically equal up to the leading order term. For this we require that $\lim_{n \rightarrow \infty}\psi_n=0$, $\lim_{n \rightarrow \infty}\varepsilon_n=0$, and $\lim_{n \rightarrow \infty}w_n=0$. These give rise to the conditions in Theorem \ref{thm-number-triangles}. More precisely, \\
	(a) the term $\psi_n$ in the upper bound in \eqref{LD-many-triangles} converges to zero when $\lim_{n\to\infty} (\log{n})/(w_n^2 k_n^{1/3}) = 0$ and $\lim_{n\to\infty}(\log w_n)/\log(1/p_n)=0$. An explicit expression for $\psi_n$ is given in Theorem \ref{thm:607pm22mar21}. \\ 
	(b) the lower bound in \eqref{LD-many-triangles} holds even when $\varepsilon_n$ does not vanish as $n \to \infty$, but in order to get a lower bound that matches with the upper bound we need $\lim_{n \rightarrow \infty}\varepsilon_n = 0$. \\
	(c) since $T$ converges to a Poisson distribution with parameter $\tfrac{\lambda^3}{6}$ as $n\to \infty$, the upper bound in \eqref{LD-many-triangles} is not true when $\varepsilon_n$ does not vanish as $n \to \infty$.
\end{rmk}


\subsection{Graph structure conditionally on having many triangles}
\label{sec-structure-many-triangles}
We need the notion of a near-clique:

\begin{dfn}{\bf [Near-clique]}
\label{def-near-clique}
{\rm For $\delta>0$, $G$ is called an \emph{$(\delta,m)$-clique} if $G$ has minimum degree $(1-4\delta^{1/2})(2e_{\sss G})^{1/2}$, and $e_{\sss G}=m$.} \hfill$\spadesuit$ 
\end{dfn}

The following theorem shows that if we condition the random graph to have at least $\constant k_n$ triangles, then with high probability the graph contains an almost complete subgraph (see Section \ref{S6} for finer results), while at the same time it locally still looks like the unconditional Erd\H{o}s-R\'{e}nyi random graph.

\begin{thm}{\bf [Structure of the graph conditionally on having many triangles]}
\label{thm-structure-graph-many-triangles}
Under the conditions of Theorem \ref{thm-number-triangles}, for every $\delta>0$, there exists a constant $C>0$ and a sequence $\psi_n=o(1)$ such that, for $n$ large enough,
	\eqn{
	\label{existence-near-clique}
	\mathbb{P}\left(G_{n,p_n} \text{\rm contains no } \left(C(\psi_n+w_n),m_n\right)\text{\rm-clique} \mid T\geq \constant k_n\right) 
	\leq\delta, 
	}
where $m_n:= \tfrac12[6(\constant-2w_n)k_n(1-Ck_n^{-1/3})]^{2/3}$. At the same time, the Erd\H{o}s-R\'{e}nyi random graph, conditionally on $T\geq \constant k_n$, converges locally in probability to a $\text{Poi}(\lambda)$ branching process, just like the unconditional Erd\H{o}s-R\'{e}nyi random graph.
\end{thm}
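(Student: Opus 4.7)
The plan is to deduce both claims from the upper bound in Theorem~\ref{thm:607pm22mar21} combined with a structural analysis of the variational problem~\eqref{eqn:1054pm10mar21}. The proof of the upper bound, in the spirit of~\cite{harel2019upper}, decomposes the event $\{T\geq \constant k_n\}$ into contributions coming from small ``seed'' subgraphs $G\subseteq G_{n,p_n}$ satisfying $\mathbb{E}_G(T)\geq (\constant-w_n)k_n$. By Theorem~\ref{thm:944pm28apr21} the optimum of~\eqref{eqn:1054pm10mar21} equals $\tfrac12[6\constant k_n]^{2/3}\log(1/p_n)(1+o(1))$ and is attained to leading order only by cliques on approximately $[6\constant k_n]^{1/3}$ vertices, so the seed $G$ must itself be close to such a clique.

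Concretely, I would first upgrade the proof of Theorem~\ref{thm:607pm22mar21} to the statement that, with probability $1-o(1)$ conditionally on $\{T\geq \constant k_n\}$, $G_{n,p_n}$ contains a subgraph $G$ with $e_G\leq (1+\psi_n)\tfrac12[6\constant k_n]^{2/3}$ and $\mathbb{E}_G(T)\geq (\constant-2w_n)k_n$; seeds with substantially larger edge count contribute a negligible fraction of the probability by the upper-tail estimate itself. The next step is a stability lemma: any such $G$ contains a $(C(\psi_n+w_n),m_n)$-clique in the sense of Definition~\ref{def-near-clique}. Here the ``cherry'' contribution $p_n\sum_v\binom{\deg_G(v)}{2}\leq 2\lambda e_G=O(k_n^{2/3})$ is negligible compared to $k_n$, so essentially all of $\mathbb{E}_G(T)$ is accounted for by actual triangles in $G$. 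By Kruskal--Katona, the number of triangles in a graph with $e$ edges is at most $\tfrac16(2e)^{3/2}$, with equality only for cliques on $(2e)^{1/2}$ vertices; a quantitative stability version -- tracking how the triangle deficit controls the discrepancy of the degree sequence from the constant $(2e_G)^{1/2}$ -- converts the near-extremality of $(e_G,T(G))$ into the minimum-degree bound required.

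For the local-limit assertion, the key observation is that the near-clique $G$ guaranteed above has $|V(G)|=O(k_n^{1/3})=o(n)$ vertices under our hypotheses. Conditionally on a specific $G$ being planted, the remaining edges of $G_{n,p_n}$ are still independent $\mathrm{Ber}(p_n)$ by~\eqref{law-induced}, so the induced graph on $[n]\setminus V(G)$ is distributed as $G_{n-|V(G)|,p_n}$. For a uniformly chosen $v\in[n]$ and any fixed $r\in\mathbb{N}$, the probability that $v$ lies within graph-distance $r$ of $V(G)$ in $G_{n,p_n}$ is at most $O(|V(G)|\lambda^r/n)=o(1)$, so the $r$-neighborhood of $v$ has the same law as in an unconditional sparse Erd\H{o}s--R\'enyi graph, which converges locally in probability to a $\mathrm{Poi}(\lambda)$ Galton--Watson tree by the standard result. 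Writing $\mathbb{P}(\cdot\mid T\geq \constant k_n)$ as a mixture over the possible locations of the near-clique furnished by the first part of the theorem then delivers the local-limit claim.

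The main obstacle is the stability step for the variational problem: turning the qualitative fact that near-optimizers of~\eqref{eqn:1054pm10mar21} resemble cliques into the quantitative minimum-degree bound of Definition~\ref{def-near-clique} with explicit dependence on $\psi_n$ and $w_n$. A secondary technical difficulty is extracting the seed $G$ from the non-linear large deviations machinery as a genuine structural feature of $G_{n,p_n}$, so that~\eqref{existence-near-clique} follows rather than merely the weaker enumeration bound used in the upper-tail estimate. Both hurdles should be surmountable via Kruskal--Katona-type stability results combined with sparse-regime refinements of the Harel--Mousset--Samotij framework.
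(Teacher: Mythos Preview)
Your approach to the near-clique part is essentially what the paper does. The paper extracts a ``core'' (a minimal seed) from the Harel--Mousset--Samotij machinery, shows in Proposition~\ref{thm:259pm25mar21} that conditionally on $\{T\geq \constant k_n\}$ this core has at most $\tfrac12(6\constant(1+2\psi_n)k_n)^{2/3}$ edges with probability $\geq 1-\delta$, and then in Proposition~\ref{thm:804pm23jun21} invokes exactly a Kruskal--Katona stability result (quoted as Proposition~\ref{keevash_stab}, from~\cite{keevash2008shadows}) to convert near-extremality of the edge--triangle pair into the minimum-degree bound of Definition~\ref{def-near-clique}. Your identification of the cherry contribution as $O(k_n^{2/3})$ and of the stability step as the crux is accurate.

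For the local-limit part your argument has a genuine gap. The sentence ``Writing $\mathbb{P}(\cdot\mid T\geq \constant k_n)$ as a mixture over the possible locations of the near-clique'' is not justified: conditioning on $\{T\geq \constant k_n\}$ is \emph{not} the same as conditioning on $\{G^*\subseteq G_{n,p_n}\}$ for a random near-clique $G^*$. The event $\{T\geq \constant k_n\}$ depends on every edge of $G_{n,p_n}$, so even once a near-clique is identified inside $G_{n,p_n}$, the conditional law of the remaining edges is not i.i.d.\ $\mathrm{Ber}(p_n)$. Part~(a) of the theorem only tells you a near-clique \emph{exists} whp; it gives no control on how the residual conditioning distorts the graph outside it. Turning this into a rigorous mixture representation would require an additional decoupling argument that you have not supplied.

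The paper sidesteps this entirely. It never uses the near-clique for the local-limit claim. Instead it quotes the large deviation principle of Bordenave and Caputo~\cite[Theorem~1.8]{bordenave2015large} (Proposition~\ref{thm:341pm30jun21}) to get, for every $\varepsilon>0$, a bound $\mathbb{P}\big(U(G_{n,p_n})\notin B(\mathrm{UGW}(\mathrm{Poi}(\lambda)),\varepsilon)\big)\leq \exp(-C_\varepsilon n)$ directly for the \emph{unconditional} graph (Proposition~\ref{lem:433pm12sep21}). Since $\mathbb{P}(T\geq \constant k_n)\geq \exp\big(-O(k_n^{2/3}\log n)\big)$ by Theorem~\ref{thm:437pm28apr21} and $k_n^{2/3}\log n=o(n)$, the simple ratio bound
\[
\mathbb{P}\big(U(G_{n,p_n})\notin B(\mathrm{UGW}(\mathrm{Poi}(\lambda)),\varepsilon)\,\big|\,T\geq \constant k_n\big)
\leq \frac{\exp(-C_\varepsilon n)}{\mathbb{P}(T\geq \constant k_n)}\to 0
\]
finishes the proof. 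This is both shorter and avoids the decoupling issue altogether; the structural information about the near-clique plays no role.
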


\begin{rmk}{\bf [Structure of the proof of Theorem \ref{thm-structure-graph-many-triangles}]}
$\mbox{}$\\
(a) Theorem \ref{thm-structure-graph-many-triangles} shows that the triangles in the Erd\H{o}s-R\'{e}nyi random graph conditionally on having many triangles are highly localised: They are almost all in a near-clique, and thus the remainder of the graph is not affected by the conditioning.\\
(b) The proof of Theorem \ref{thm-structure-graph-many-triangles} is given in Section \ref{S6}, and is organised as follows. In Theorem \ref{thm:1000am24jun21} in Section \ref{sec-near-clique-many-triangles}, we prove a finer result on the existence of a near-clique. In Theorem \ref{thm:428pm11oct21} in Section \ref{sec-local-structure-many-triangles}, we identify the local limit of the Erd\H{o}s-R\'{e}nyi random graph conditionally on having many triangles. There we also define more precisely what local convergence in probability means. In this proof, we crucially rely on Bordenave and Caputo \cite[Theorem 1.8]{bordenave2015large}, which identifies the large deviation principle for the local limit of the Erd\H{o}s-R\'{e}nyi random graph. Both results are finer than the one stated in Theorem \ref{thm-structure-graph-many-triangles}.
\hfill$\blacksquare$
\end{rmk}


\subsection{Large deviations for the number of vertices in triangles}
\label{sec-many-vertices-in-triangles}

Let $V_{\sss T}(G)$ be the number of vertices in $G$ that are part of a triangle. The following theorem derives a large deviation estimate on the probability that the Erd\H{o}s-R\'enyi random graph has at least $k_n$ vertices that are part of a triangle:

\begin{thm}{\bf [Large deviations for the number of vertices in triangles]}
\label{thm-many-vertices-in-triangles}
Let $(k_n)_{n\in\N}$ be such that $\lim_{n \to \infty}k_n/\log n = \infty$. Then, for $n$ large enough,
	\eqn{
	-\tfrac13 k_n\log(\tfrac13k_n)-Ck_n\leq \log\mathbb{P}(V_{\sss T}(G_{n,p_n})\geq k_n) \leq  -\tfrac13 k_n\log(\tfrac13k_n)+Ck_n,
	}
for some constant $C>0$.
\end{thm}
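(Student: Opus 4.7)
For the lower bound, my plan is a direct planting argument exploiting disjointness. For each $k_n$-subset $S \subseteq [n]$ (assuming $3 \mid k_n$, with an $O(1)$ adjustment otherwise), introduce the event
\[
C_S = \{G_{n,p_n}[S] \text{ is exactly a matching of } k_n/3 \text{ vertex-disjoint triangles, and no triangle of } G_{n,p_n} \text{ meets } V\setminus S\}.
\]
The events $\{C_S\}_{|S|=k_n}$ are pairwise disjoint, since each $C_S$ pins down $V_{\sss T}(G_{n,p_n}) = S$ exactly, and $C_S \subseteq \{V_{\sss T}(G_{n,p_n}) \geq k_n\}$, so $\mathbb{P}(V_{\sss T} \geq k_n) \geq \sum_{|S|=k_n}\mathbb{P}(C_S)$. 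For fixed $S$, summing over the $k_n!/(6^{k_n/3}(k_n/3)!)$ possible triangle-matchings of $S$ and multiplying by $p_n^{k_n}(1-p_n)^{\binom{k_n}{2}-k_n}$ gives $\mathbb{P}(G[S]\text{ is some matching})$; a short first-moment check shows that the conditional probability of "no contaminating triangle outside $S$" is bounded below uniformly by a constant $c(\lambda)>0$ (the expected number of such triangles is dominated by the $\lambda^3/6$ coming from triangles entirely inside $V\setminus S$, and stays $O(1)$ for $k_n \leq n$). Multiplying by $\binom{n}{k_n}$, using $\binom{n}{k_n}k_n!\approx n^{k_n}$, applying Stirling to $(k_n/3)!$, and absorbing $(1-p_n)^{\binom{k_n}{2}}=\exp(O(k_n^2/n))\subseteq\exp(O(k_n))$ into the constant produces the target lower bound.

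For the upper bound, my plan is a canonical-witness union bound. The key extremal-structure observation is that any graph $H$ in which every vertex lies on a triangle of $H$ satisfies $|E(H)|\geq |V(H)|$ (every vertex has degree $\geq 2$), with equality exactly when $H$ is a disjoint union of triangles. Hence the edge-minimal witness for $\{V_{\sss T}\geq k_n\}$ is precisely a disjoint union of $k_n/3$ triangles. Restricting the union bound to this extremal type gives
\[
\sum_{H\,=\,(k_n/3)\text{-triangle-matching in }K_n} \mathbb{P}(G_{n,p_n} \supseteq H) = \frac{n(n-1)\cdots(n-k_n+1)}{6^{k_n/3}(k_n/3)!}\,p_n^{k_n} = \exp\!\left(-\tfrac{1}{3}k_n\log(k_n/3) + O(k_n)\right)
\]
by Stirling, matching the target.

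The main obstacle — and the promised "graph decomposition lemma" — is to justify restricting the union bound to the extremal type. A priori, graphs such as "books" of $k_n-2$ triangles sharing a common edge have $V_{\sss T}=k_n$ while every triangle-covering subgraph has $|E|\geq 2k_n-3$, and such exotic witnesses could inflate the sum. I would set up a canonical assignment $G \mapsto H(G)$ (for instance, via a greedy maximal matching of vertex-disjoint triangles, completed by one attachment triangle for each vertex of $V_{\sss T}(G)$ not yet covered), partition the triangles of $H(G)$ into components in the auxiliary graph whose vertices are triangles and whose edges connect vertex-overlapping pairs, and enumerate the possible component isomorphism-types. The sparse-regime exploit is that a component on $v$ vertices with $e$ edges contributes $\sim n^v p_n^e = \lambda^e n^{v-e}$, so a factor $n^{-(e-v)}$ is paid for every edge beyond the degree-$2$ benchmark; summing over compositions of $k_n$ into component sizes shows that any deviation from "all components are single triangles" is suppressed by $n^{-1}$ per excess edge, and the total contribution stays within $e^{O(k_n)}$ of the extremal benchmark. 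The hypothesis $k_n/\log n\to\infty$ is used to absorb subleading $O(\log n)$ enumeration corrections into the $O(k_n)$ error, and the rigorous execution of this classification is the technical heart of the argument.
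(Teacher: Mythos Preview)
Your proposal is correct and takes essentially the same approach as the paper: identical disjoint-triangle planting for the lower bound, and for the upper bound an edge-minimal-witness union bound organized around a greedy maximal matching of vertex-disjoint triangles, with the remaining vertices attached and the resulting excess edges suppressed by factors of $n^{-1}$. The paper's concrete ``graph decomposition lemma'' refines your attachment step by splitting the uncovered vertices into two further classes --- $V_2$, a matching of leftover edges whose endpoints share a co-neighbour in the matched part $V_1$, and an independent set $V_3$, each vertex of which is a co-neighbour of some existing edge --- which replaces your component-isomorphism enumeration with a global three-parameter count over configurations $(\ell_1,\ell_2,\ell_3)$ optimized by Lagrange multipliers, but the underlying mechanism is the one you identify.
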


\begin{rmk}{\bf [Structure of the proof of Theorem \ref{thm-many-vertices-in-triangles}]}
$\mbox{}$\\
(a) Comparing Theorem \ref{thm-many-vertices-in-triangles} to Theorem \ref{thm-number-triangles}, we see that the large deviations for the number of vertices in triangles and for the number of triangles are completely different: It is much more unlikely that there are many vertices in triangles than that there are many triangles. This is exemplified by Theorem \ref{thm-structure-graph-many-triangles}, which shows that the many triangles only involve few vertices.\\
(b) The proof of Theorem \ref{thm-many-vertices-in-triangles} is given in Section \ref{S7}, and is organised as follows. In Theorem \ref{thm:521pm10sep21} in Section \ref{S7.1}, we prove a finer upper bound on the number of vertices in triangles. Its proof is obtained by considering graphs in which all triangles are {\em disjoint}, and showing that the probability of such graphs is close to that of the upper bound. In Theorem \ref{thm:520pm10sep21} in Section \ref{S7.2}, we prove a finer upper bound on the probability of having many vertices in triangles. Its proof relies on a {\em decomposition} of the graph when there are many vertices in triangles, stated in Lemma \ref{lem:219pm23may21} in Section \ref{S7.2}, that is interesting in its own right. \hfill$\blacksquare$
\end{rmk}


\subsection{Discussion}
\label{sec:dis125pm19oct21}

In this section, we discuss our main results, mention some of their consequences, and list some open problems.

\medskip
\paragraph{\bf Large deviations for the number of triangles.} 
In Theorem \ref{thm-number-triangles} (see also Theorems \ref{thm:437pm28apr21} and \ref{thm:607pm22mar21}), we have obtained sharp upper and lower bounds on the probability $\log \mathbb{P}(T \geq \constant k_n)$ for a sequence of non-negative real numbers $(k_n)_{n\in \N}$. It is easy to see that, under the assumption $\lim_{n\rightarrow \infty}k_n^{-1/3} \log{n} =0$, Theorems \ref{thm:437pm28apr21} and \ref{thm:607pm22mar21} imply the weaker statement 
\begin{equation}
\label{eqn:315pm14sep21}
\lim_{n\rightarrow \infty}\frac{\log\mathbb{P}(T \geq k_n)}{k_n^{2/3} \log(1/p_n)}= - \tfrac12 6^{2/3}.
\end{equation}
Although the expression in \eqref{eqn:315pm14sep21} is much cleaner, the statements in Theorems \ref{thm:437pm28apr21} and \ref{thm:607pm22mar21} explicitly spell out the error terms. The proofs of these theorems are based on the theory of non-linear large deviations developed in Harel, Mousset, and Samotij \cite{harel2019upper} (which was partly based on Janson, Oleszkiewicz, and Ruci{\'n}ski \cite{janson2004upper}). 

\medskip
\paragraph{\bf Large deviations for the number of vertices in triangles.}
For $V_{\sss T}$, we can use Theorem \ref{thm-many-vertices-in-triangles} to write a limiting statement as well. For a sequence $k_n \to \infty$, such that $\lim_{n \to \infty}\frac{k_n}{\log n} = \infty$, and $k_n\leq n$,
\begin{equation}
	\lim_{n\rightarrow \infty}\frac{\log\mathbb{P}(V_{\sss T}(G_{n,p_n})\geq k_n) }{k_n\log(k_n)} = -\tfrac{1}{3}.
\end{equation}
Unfortunately, $V_{\sss T}$ (unlike $T$) does not appear to be a bounded-degree polynomial function of the entries of the adjacency matrix, and thus the existing methods are not readily applicable. More precisely, if $G$ is a simple graph on $n$ vertices with adjacency matrix $(a_{ij})_{i,j=1}^n$, then we can write 
\begin{equation}
V_{\sss T}(G)=\sum_{i=1}^n \left(1-\prod_{j=1}^n\prod_{k=1}^n(1-a_{ij}a_{ik}a_{jk})\right),
\end{equation} 
which is a polynomial function of the entries of the matrix $(a_{ij})_{i,j=1}^n$, and the degree of this polynomial depends on $n$, whereas the theory developed in Harel, Mousset, and Samotij \cite{harel2019upper} is only applicable to bounded-degree polynomials with non-negative coefficients. Also, it does not appear straightforward to apply the techniques in Chatterjee \cite{chatterjee2016nonlinear} (in particular, we were not able to verify the condition on $f$ in \cite[Theorem 1.1]{chatterjee2016nonlinear}). To get around this issue we have developed some novel {\em combinatorial} tools. Roughly, we have computed the number of graphs on $n$ vertices with $m$ edges such that exactly $q$ vertices are part of some triangle. In order to do so, we proved a {\em greedy decomposition lemma} (Lemma \ref{lem:219pm23may21} in Section \ref{S7.2}) for graphs whose $q$ vertices are part of some triangle and contain no `extra' edges. These techniques are a new contribution to the theory of non-linear large deviations, and we hope to develop them further in future work.

\medskip
\paragraph{\bf Extension: Local limit Erd\H{o}s-R\'{e}nyi conditioned on many vertices in triangles.}
It would be interesting to also investigate the {\em local limit} of the Erd\H{o}s-R\'{e}nyi random graph conditionally on having many vertices in triangles. It is not hard to show that this local limit does not change when $k_n$ is such that $k_n\log{n}=o(n)$. However, this rules out the interesting case where $k_n=\Theta(n),$ in which case this local limit should be {\em different}. We leave the proof of such a result to future work. Note, however, that it suffices to show that the local limit {\em exists}. Indeed, assume that the local limit exists along a subsequence, and that $V_{\sss T}(G_{n,p_n})\geq a n$ for some $a\in(0,1)$. Then, a uniform vertex will, with probability at least $a$, be part of a triangle. Since the Erd\H{o}s-R\'{e}nyi random graph is locally tree-like, this means that the limit is different. As such, a main ingredient would be to show that the Erd\H{o}s-R\'{e}nyi random graph conditionally on having at least $an$ vertices in triangles is {\em tight} in the local convergence topology.

\medskip
\paragraph{\bf Extension: Cliques for many triangles.}
In Theorem \ref{thm-structure-graph-many-triangles}, we show that, conditionally on there being many triangles, the graph whp contains a near-clique. We conjecture that this result can be considerably sharpened. Indeed, we expect that there is a clique of size $a_n$, where $a_n$ is the largest natural number such that $a_n(a_n-1)(a_n-2)/6\leq \constant k_n$. This is {\em much} sharper than the near-clique result in Theorem \ref{thm-structure-graph-many-triangles}.

\medskip
\paragraph{\bf Extension: Beyond triangles.}
We expect that some of our results can be relatively straightforwardly extended beyond triangles. Indeed, Theorem \ref{thm-number-triangles} should apply also when dealing with larger cliques, when the variational problem in \eqref{eqn:1054pm10mar21} is appropriately adapted. We expect that this is not quite the case for Theorem \ref{thm-many-vertices-in-triangles}. The proof of Theorem \ref{thm-many-vertices-in-triangles} crucially relies on a counting argument based on the decomposition Lemma \ref{lem:219pm23may21} in Section \ref{S7.2}. Although this lemma extends to higher-order cliques, the counting argument becomes much more difficult.

\medskip
\paragraph{\bf Extension: Other ranges of $p_n$.} We believe that Theorem \ref{thm-number-triangles} can be extended to larger values of $p_n$ by choosing a suitable sequence $k_n$, but Theorem \ref{thm-many-vertices-in-triangles} does not seem to extend easily. Indeed, in our proof (see Lemma \ref{lem:210pm23jun21} in Section \ref{S7.2}) we have used the fact that the probability for the graph to be triangle free is bounded below by a constant, and the proof of Lemma \ref{lem:540pm24may21} in Section \ref{S7.2} also suggests that if we let $\lambda$ grow like a power of $n$, then it will affect the upper bound in Theorem \ref{thm-many-vertices-in-triangles}. It is still possible to let $\lambda$ grow slowly, say, like $\log{n}$, but we do not pursue this in this paper.   

\medskip
\paragraph{\bf Highly-clustered real-world networks.}
In many real-world networks there are order $n$ triangles, as well as order $n$ vertices in triangles. Thus, from a practical perspective, the setting where $k_n$ is of order $n$ is the most relevant. Comparing Theorems \ref{thm-number-triangles} and \ref{thm-many-vertices-in-triangles}, we see that conditioning on many vertices in triangles is a {\em much more effective way to create a highly-clustered graph} than conditioning on having many triangles. Furthermore, Theorem \ref{thm-many-vertices-in-triangles} also explains that the logarithm of the probability for an Erd\H{o}s-R\'{e}nyi random graph to be highly clustered is of the order $n\log{n}$, a regime that has not yet attracted much attention. Indeed, Bordenave and Caputo \cite{bordenave2015large} show that the probability for the local limit of the Erd\H{o}s-R\'{e}nyi random graph to be unequal to a Poisson branching process is exponentially small, but {\em only when} the alternative local limit is a tree itself. It would be highly interesting to investigate the probability that the local limit is not a tree in more detail, and the exponential rate should be $n\log{n}$.

\medskip
\paragraph{\bf Exponential random graphs as real-world network models.}
Exponential random graphs models (ERGM) are popular for modelling real-world networks. Let $\mathcal{G}_n$ be the space of all simple graphs on the vertex set $[n]$, which has $2^{n\choose 2}$ elements. The ERGM can be represented by the following exponential form
	\begin{equation}
	\mathbb{P}_{\sss T}(G)=\frac{1}{Z_n}\exp{\left(T(G)\right)},\,\,G\in \mathcal{G}_n,
	\end{equation}	   
where $T(G)$ is a function on the space of graphs, and $Z_n$ is the normalizing constant (also called the partition function). Examples of $T$ include subgraph counts such as the number of edges, triangles, cycles, etc. Such models were first studied in Holland and Leinhardt \cite{holland1981exponential}, Frank and Strauss \cite{frank1986markov}. Several new sufficient statistics were introduced in Snijders, Pattison, Robins and Handcock \cite{snijders2006new}. The evaluation of $Z_n$ is one of the fundamental (and often difficult) problems. In the dense regime the first such result was obtained by Chatterjee and Diaconis \cite{chatterjee2013estimating}. While ERGMs are well-understood in the dense regime, there are hardly any results on sparse random graphs (a recent result in the sparse regime appeared in Mukherjee \cite{mukherjee2020degeneracy}, and a related model called the random triangle model was studied by Jonasson \cite{jonasson1999random}, and H{\"a}ggstr{\"o}m and Jonasson \cite{haggstrom1999phase}). Unfortunately, even dense exponential random graphs are problematic, as shown by Bhamidi, Bresler, and Sly \cite{BhaBreSly11}: either they locally look like dense Erd\H{o}s-R\'{e}nyi random graphs, or the mixing times of Glauber dynamics or Metropolis-Hasting dynamics on these graphs are exponentially large. 

\medskip
\paragraph{\bf Phase transitions for exponential random graphs.}
Our results have an interesting tale to tell about exponential random graphs. First, we could consider exponential random graphs given by the measure $\mathbb{P}_\beta$ defined in terms of its Radon-Nikodym derivative w.r.t.\ the law $\mathbb{P}$ of the Erd\H{o}s-R\'{e}nyi random graph with $p=\lambda/n$ as
	\eqn{
	\label{ERRG-vertices}
	\frac{d\mathbb{P}_\beta}{d\mathbb{P}}=\frac{1}{Z_n(\betanV)} \mathrm{e}^{\betanV V_{\sss T}},
	}
where $Z_n(\betanV)$ is a normalizing constant or {\em partition function} and $\betanV\geq 0$ is an appropriate sequence. Theorem \ref{thm-many-vertices-in-triangles} suggests that this measure is well-defined for $\betanV=\beta\log{n}$, and that it will indeed give rise to a linear number of vertices in triangles. Note that the $\beta \log{n}$ scaling in the exponential is crucial to make the measure well-defined in the large-graph limit. ERGMs often exhibit {\em phase transitions} (also called degeneracy) at some values of the parameters.(see Chatterjee and Diaconis \cite{chatterjee2013estimating}, Bhamidi, Chakraborty, Cranmer, and Desmarais \cite{bhamidi2018weighted}, Mukherjee \cite{mukherjee2020degeneracy}). Roughly, this means that a slight change in the values of some parameter has a drastic effect on a typical outcome of the model.

The above phenomenon is observed in various models in statistical physics (see Georgii \cite{georgii2011gibbs}), and is intimately related to the properties of the associated partition function. Here we investigate the scaling of the partition function, and show that the model proposed in \eqref{ERRG-vertices} undergoes a phase transition:

\begin{cor}{\bf [Exponential graph with the number of vertices in triangles]}
\label{cor-ERRG-vertices-in-triangles}
$\mbox{}$\\
Consider the exponential random graph model defined in \eqref{ERRG-vertices} with $\betanV=\beta\log{n}$. Then
	\eqn{
	\lim_{n\rightarrow \infty} \frac{1}{n\log{n}} \log{Z_n(\beta\log n)}=(\beta-\tfrac{1}{3})_+,
	}
where $(x)_+=\max\{x,0\}$. Moreover, when $\beta<\tfrac13$, $V_{\sss T}/n$ converges in probability to $0$, and when $\beta>\tfrac13$, $V_{\sss T}/n$ converges in probability to $1$ as $n\to\infty$.
\end{cor}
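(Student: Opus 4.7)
The strategy is to evaluate the partition function $Z_n(\beta\log n)=\mathbb{E}[n^{\beta V_{\sss T}}]$ by matching it against Theorem~\ref{thm-many-vertices-in-triangles}, and then to deduce the concentration of $V_{\sss T}/n$ via a standard tilted-moment argument. Writing
\[
Z_n(\beta\log n)=\sum_{k=0}^{n} n^{\beta k}\,\mathbb{P}(V_{\sss T}=k),
\]
the lower bound is the easy half: when $\beta>\tfrac13$, retain only the term at $k=n$ and invoke the lower bound of Theorem~\ref{thm-many-vertices-in-triangles} to obtain $\log Z_n(\beta\log n)\geq \beta n\log n-\tfrac13 n\log(n/3)-Cn=(\beta-\tfrac13)n\log n+O(n)$; when $\beta\leq\tfrac13$, the trivial bound $Z_n\geq \mathbb{P}(V_{\sss T}=0)$ gives $\log Z_n\geq -C$, which is enough since $(\beta-\tfrac13)_+=0$.

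For the matching upper bound I would split the sum into three ranges. For $k\leq K_0$ with $K_0$ fixed, the contribution is at most $n^{\beta K_0}=e^{O(\log n)}$. For $K_0<k\leq c_0\log n$, the crude bound $\mathbb{P}(V_{\sss T}\geq k)\leq 1$ gives $c_0(\log n)\,n^{\beta c_0\log n}=e^{O((\log n)^2)}$. For the main range $c_0\log n<k\leq n$, Theorem~\ref{thm-many-vertices-in-triangles} applies and the exponent of each term is controlled by $f(k):=\beta k\log n-\tfrac13 k\log(k/3)+Ck$. Differentiating, the critical point is $k^{\ast}\sim n^{3\beta}$, so: when $\beta<\tfrac13$, $k^{\ast}<n$ and $\max_k f(k)=O(n^{3\beta})=o(n\log n)$; when $\beta>\tfrac13$, $f$ is strictly increasing on the range and $\max_k f(k)=f(n)=(\beta-\tfrac13)n\log n+O(n)$. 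Summing the three ranges yields $\log Z_n(\beta\log n)=(\beta-\tfrac13)_+ n\log n+o(n\log n)$.

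For the convergence in probability of $V_{\sss T}/n$, fix $\epsilon>0$. If $\beta<\tfrac13$, apply the same estimates to the restricted sum $\sum_{k\geq\epsilon n}n^{\beta k}\mathbb{P}(V_{\sss T}\geq k)$: on this range the constraint $k\geq\epsilon n\gg k^{\ast}$ forces $f(k)\leq -\epsilon(\tfrac13-\beta)n\log n+O(n)$, so
\[
\mathbb{P}_\beta(V_{\sss T}\geq\epsilon n)\leq Z_n^{-1}\cdot n\,e^{-\epsilon(\frac13-\beta)n\log n+O(n)}\to 0,
\]
using $Z_n\geq 1$. If $\beta>\tfrac13$, the analogous argument on the event $\{V_{\sss T}\leq(1-\epsilon)n\}$ bounds the numerator by $\exp((1-\epsilon)(\beta-\tfrac13)n\log n+O(n))$, while the denominator is at least $\exp((\beta-\tfrac13)n\log n+O(n))$, so the ratio decays like $\exp(-\epsilon(\beta-\tfrac13)n\log n)\to 0$.

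The one point requiring care is the range $K_0<k\leq c_0\log n$, where Theorem~\ref{thm-many-vertices-in-triangles} is not applicable (it requires $k_n/\log n\to\infty$); however this range contributes only $e^{O((\log n)^2)}=o(n\log n)$ on the logarithmic scale and is therefore swamped by the main term. No other step requires ingredients beyond what is already developed in the paper, and the bookkeeping between the $n\log n$ leading order, the $O(n)$ correction from Theorem~\ref{thm-many-vertices-in-triangles}, and the $O(n^{3\beta})$ contribution of the subcritical maximiser when $\beta<\tfrac13$ is routine.
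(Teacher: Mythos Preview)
Your proposal is correct and follows essentially the same route as the paper: write $Z_n$ as $\sum_k n^{\beta k}\mathbb{P}(V_{\sss T}=k)$, bound the large-$k$ terms via Theorem~\ref{thm-many-vertices-in-triangles}, locate the dominant term, and deduce concentration by the same tilted-sum argument restricted to $\{V_{\sss T}\geq \varepsilon n\}$ or $\{V_{\sss T}\leq (1-\varepsilon)n\}$. Two cosmetic differences: the paper obtains the lower bound for $\beta\leq\tfrac13$ via Jensen ($\log Z_n\geq \betanV\,\mathbb{E}[V_{\sss T}]=o(n\log n)$) rather than $Z_n\geq \mathbb{P}(V_{\sss T}=0)$, and it handles the small-$k$ range more tersely by simply bounding $\mathbb{P}(V_{\sss T}\geq k)\leq 1$ for $k=o(n)$ without your explicit three-range split; your version is in fact a bit more careful about the regime where Theorem~\ref{thm-many-vertices-in-triangles} does not literally apply.
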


Note that Corollary \ref{cor-ERRG-vertices-in-triangles} not only determines the scaling of the partition function, it also shows that the model undergoes a phase transition: when $\beta<\tfrac13$, in a typical realisation from $\mathbb{P}_{\beta\log{n}}$ very few vertices are part of some triangle, while when $\beta>\tfrac13$, almost all vertices are part of some triangle. The proof of Corollary \ref{cor-ERRG-vertices-in-triangles} is given in Section \ref{S8}. Finally, Corollary \ref{cor-ERRG-vertices-in-triangles} and Theorem \ref{thm-many-vertices-in-triangles} suggest that interesting behaviour arises for $\betanV=\frac{1}{3}\log{n}+\beta$. For this we would need to study $\log\mathbb{P}(V_{\sss T}=k_n)$ up to order $n$, which is beyond the scope of the present paper.

Alternatively, we could consider the measure
	\eqn{
	\label{ERRG-triangles}
	\frac{d\mathbb{P}_{\betanT, \betanS}}{d\mathbb{P}}=\frac{1}{Z_n(\betanT, \betanS)} \mathrm{e}^{\betanT  T -\betanS S},
	}
where $S$ is the number of subgraphs $K_4\setminus e$, the complete subgraph on $4$ vertices minus an edge. These subgraphs are sometimes called {\em diamonds}. Again, we take $\betanT=\beta_{\sss T}\log{n}$ and $\betanS=\beta_{\sss S}\log{n}$ with $\beta_{\sss T},\beta_{\sss S}\geq 0$, as suggested by Theorem \ref{thm-many-vertices-in-triangles}, when we wish to have order $n$ vertices in triangles. When $\betanS=0$, Theorem \ref{thm-number-triangles} suggests that the graph will be complete whp, as this gives a contribution $n(n-1)(n-2)/6$ in the exponent, which is so large that it cannot be beaten by the entropy of graphs that miss a substantial number of edges. However, when $\betanS=\beta_{\sss S}\log{n}$ with $\beta_{\sss S}>0$, the $-\betanS S$ term strongly penalises graph configurations with large cliques, and thus `tames' the triangle count. As a result, we are tempted to believe that the measure $\mathbb{P}_{\betanT, \betanS}$ is concentrated on graph configurations with a linear number of triangles in a linear number of vertices. This would lead to sparse exponential random graphs that are {\em not locally tree-like}. It would be of great interest to see whether such results can be proved. They would be extremely relevant for the study of sparse exponential random graphs. Unfortunately, we do not know how to prove a result like Corollary \ref{cor-ERRG-vertices-in-triangles} for $\betanT=\beta_{\sss T}\log{n}$ and $\betanS=\beta_{\sss S}\log{n}$ in this model.

\medskip
\paragraph{\bf Outline.}
Sections~\ref{S3}--\ref{S8} provide the proofs of our main results. Sections~\ref{S3}--\ref{S5} focus on Theorem~\ref{thm-number-triangles}, Section~\ref{S6} on Theorem~\ref{thm-structure-graph-many-triangles}, Section~\ref{S7} on Theorem~\ref{thm-many-vertices-in-triangles}, and Section \ref{S8} on Corollary~\ref{cor-ERRG-vertices-in-triangles}. Along the way, we define refined upper and lower bounds that are of interest in themselves.


\section{Lower bound on the large deviations for the number of triangles}
\label{S3}

In this section, we prove the lower bound in Theorem \ref{thm-number-triangles}. The following theorem describes a more precise result:

\begin{thm}{\bf [Large deviation lower bound]}
\label{thm:437pm28apr21}
Suppose that the sequence of positive reals $(\varepsilon_n)_{n\in\N}$ is such that $\lim_{n\to\infty} \varepsilon_n = 0$ and $\liminf_{n\rightarrow \infty}k_n^{2/3}\varepsilon_n>0$. Then, for every sequence of positive reals $(w_n)_{n\in\N}$ such that $\lim_{n\to\infty}$ $ w_n = 0$, and such that $w_nk_n \geq {n \choose 3} (p_n)^C$ for some constant $C>0$, for $n$ large enough,
\begin{equation}
\log \mathbb{P}(T\geq \constant k_n) \geq -(1+\varepsilon_n)\Phi_{n,p_n,k_n}(\constant +w_n).
\end{equation}
\end{thm}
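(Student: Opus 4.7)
The approach is a planting / first-moment lower bound. Let $G^\star \subseteq K_n$ be a minimizer of the variational problem defining $\Phi_{n,p_n,k_n}(a+w_n)$, so that $\mathbb{E}_{G^\star}(T) \geq (a+w_n)k_n$ and $e_{G^\star}\log(1/p_n) = \Phi_{n,p_n,k_n}(a+w_n)$. Since the edges of $G_{n,p_n}$ are independent, planting gives
\[
\mathbb{P}(T \geq a k_n) \;\geq\; \mathbb{P}(G_{n,p_n} \supseteq G^\star)\, \mathbb{P}_{G^\star}(T \geq a k_n) \;=\; p_n^{e_{G^\star}}\, \mathbb{P}_{G^\star}(T \geq a k_n),
\]
so on the log scale the theorem reduces to proving
\[
\log \mathbb{P}_{G^\star}(T \geq a k_n) \;\geq\; -\varepsilon_n\, \Phi_{n,p_n,k_n}(a+w_n).
\]

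To control the conditional probability I would apply Chebyshev's inequality under $\mathbb{P}_{G^\star}$. Since $\mathbb{E}_{G^\star}(T) \geq (a+w_n)k_n$, the event $\{T < ak_n\}$ is contained in $\{|T - \mathbb{E}_{G^\star}(T)| \geq w_n k_n\}$, so
\[
\mathbb{P}_{G^\star}(T < a k_n) \;\leq\; \frac{\Var_{G^\star}(T)}{(w_n k_n)^2}.
\]
Under $\mathbb{P}_{G^\star}$ the edges outside $G^\star$ are still independent Bernoulli$(p_n)$ with $p_n = \lambda/n$, so the variance decomposes over individual triangles and over pairs of distinct triangles sharing a single edge not in $G^\star$ (pairs sharing no vertex, sharing only a vertex, or sharing an edge of $G^\star$ contribute zero covariance). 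A direct computation bounds $\Var_{G^\star}(T)$ by a polynomial expression of the form $C\binom{n}{3}(p_n)^{C}$, so under the hypothesis $w_n k_n \geq \binom{n}{3}(p_n)^{C}$ the Chebyshev bound forces $\mathbb{P}_{G^\star}(T \geq ak_n) \to 1$. Because $\Phi_{n,p_n,k_n}(a+w_n)\to\infty$ (by the asymptotics of Theorem \ref{thm:944pm28apr21}) while $\varepsilon_n\to 0$, the resulting estimate $\log \mathbb{P}_{G^\star}(T \geq a k_n) = o(1)$ is absorbed into $-\varepsilon_n \Phi_{n,p_n,k_n}(a+w_n)$ for all $n$ large enough, completing the argument.

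The principal obstacle is the variance bound, which must be uniform over the \emph{unknown} minimizer $G^\star$: the only information available is $e_{G^\star} = \Phi_{n,p_n,k_n}(a+w_n)/\log(1/p_n)$ and the constraint on $\mathbb{E}_{G^\star}(T)$. For a pair of triangles sharing an edge $e\notin G^\star$, an elementary calculation gives covariance of order $p_n^{5-j_1-j_2}(1-p_n)$, where $j_i\in\{0,1,2\}$ counts how many of the two non-shared edges of the $i$-th triangle lie in $G^\star$; the worst case $j_1=j_2=2$ gives covariance of order $p_n$ but is controlled by the common neighborhood sizes of the endpoints of $e$ in $G^\star$, which are in turn controlled by $e_{G^\star}$. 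Summing these contributions over all pair configurations and using $\mathbb{E}_{G^\star}(T)\leq \binom{n}{3}$ produces a bound of the promised polynomial form; the exponent $C$ in $w_n k_n \geq \binom{n}{3}(p_n)^C$ is precisely calibrated so that the sum is $o((w_n k_n)^2)$.
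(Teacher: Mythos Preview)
Your planting framework is exactly right and matches the paper: pick a minimizer $G^\star$ of the variational problem, condition on $G_{n,p_n}\supseteq G^\star$, and reduce to a lower bound on $\mathbb{P}_{G^\star}(T\geq ak_n)$. The divergence is in how that conditional probability is handled.

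The paper does \emph{not} use Chebyshev. It uses the trivial first-moment splitting
\[
(a+w_n)k_n \;\leq\; \mathbb{E}_{G^\star}(T) \;\leq\; ak_n \,\mathbb{P}_{G^\star}(T<ak_n)\;+\;\binom{n}{3}\,\mathbb{P}_{G^\star}(T\geq ak_n)
\;\leq\; ak_n + \binom{n}{3}\,\mathbb{P}_{G^\star}(T\geq ak_n),
\]
which immediately yields $\mathbb{P}_{G^\star}(T\geq ak_n)\geq w_nk_n/\binom{n}{3}$. The hypothesis $w_nk_n\geq \binom{n}{3}p_n^{C}$ is exactly what makes $-\log$ of this at most $C\log(1/p_n)$. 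A separate short argument (via Lemma~\ref{lem:1109pm24mar21}) gives $e_{G^\star}\geq C'k_n^{2/3}$, so that this $C\log(1/p_n)$ is absorbed by $\varepsilon_n\,\Phi_{n,p_n,k_n}(a+w_n)$ with $\varepsilon_n=k_n^{-2/3}$. No variance, no structural information about $G^\star$ beyond the defining constraint.

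Your Chebyshev route has a genuine gap. Under $\mathbb{P}_{G^\star}$ the triangles lying entirely outside $V(G^\star)$ already contribute variance of order $\binom{n}{3}p_n^3(1-p_n^3)\to\lambda^3/6$, so $\Var_{G^\star}(T)$ is bounded \emph{below} by a positive constant whenever $|V(G^\star)|=o(n)$ (which holds for the minimizer as soon as $k_n=o(n^3)$). Chebyshev therefore says nothing unless $w_nk_n\to\infty$, but the hypotheses allow $w_nk_n$ as small as $\binom{n}{3}p_n^{C}\asymp n^{3-C}$, and for $C>3$ this tends to~$0$. Your sentence ``the exponent $C$ \ldots\ is precisely calibrated'' also misreads the quantifiers: that $C$ is part of the hypothesis, not a parameter you may tune, and there is no a priori relation between it and whatever exponent emerges from a variance computation. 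The paper's first-moment bound sidesteps all of this: the resulting probability lower bound may itself be tiny, but its logarithm is controlled directly by the assumption on $w_nk_n$.
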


\begin{rmk}{\bf [Parameters in Theorem \ref{thm:437pm28apr21}]}
In Theorem \ref{thm:437pm28apr21} we estimate the probability $\mathbb{P}(T\geq \constant k_n)$ for a sequence of positive real numbers $(k_n)_{n\in\N}$ such that $\lim_{n\to\infty} k_n=\infty$, and the estimate is expressed in terms of a perturbation of the variational formula in \eqref{eqn:1054pm10mar21}. As noted before, the condition $\lim_{n\to\infty} \varepsilon_n = 0$ requires that $k_n$ grows faster than $(\log{n})^{3/2}$. The variational formula is perturbed by $w_n$, and the condition $w_nk_n \geq {n \choose 3} (p_n)^C$ requires that the perturbation is not too small. More precisely, this condition implies that $w_n\geq \frac{1}{n^Ck_n}$ for some constant $C>0$ as $n\to\infty$. Also note that we can easily estimate $\mathbb{P}(T\geq k_n)$ by simply picking $\constant=1$. We have chosen to express the theorem in the current form because it makes the perturbation argument in our proof notationally cleaner. \hfill$\blacksquare$
\end{rmk}

The proof of Theorem \ref{thm:437pm28apr21} is based on the idea that $\mathbb{P}(T\geq \constant k_n)$ is controlled by the presence of a subgraph $G$ that minimises an appropriate perturbation of  \eqref{eqn:1054pm10mar21}. This subgraph is very dense, as we prove later on. In the rest of this section we formalise this idea. 

Let $G$ be the graph that attains the minimum in the definition of $\Phi_{n,p_n,k_n}(\constant +w_n)$ in \eqref{eqn:1054pm10mar21}. First note that, by the definition of conditional probability,
	\begin{equation}
	\label{aln:1119am03oct21}
	\begin{aligned}
		-\log\mathbb{P}(T\geq \constant k_n) 
		&\leq - \log \mathbb{P}(G_{n,p_n} \supseteq G) - \log \mathbb{P}_{\sss G}(T\geq \constant k_n)\\
		& = e_{\sss G} \log(1/p_n) - \log \mathbb{P}_{\sss G}(T\geq \constant k_n).
	\end{aligned} 
\end{equation}
Therefore, by construction, we have $e_{\sss G} \log(1/p_n)=\Phi_{n,p_n,k_n}(\constant +w_n)$. We will estimate $\mathbb{P}_{\sss G}(T\geq \constant k_n)$ in \eqref{aln:1119am03oct21}, for which we use the following lemma, whose proof is postponed to the end of this section.

\begin{lem}{\bf [Lower bound on the edge count for graphs with many triangles]}
\label{lem:1109pm24mar21}
Let $G$ be any simple graph on $n$ vertices with adjacency matrix $(a_{ij})_{i,j=1}^n$. Let $e_{\sss G}$ and $\TG$ be the number of edges and triangles in $G$, respectively. Then $\frac{1}{6}(2e_{\sss G})^{3/2} \geq \TG$.
\end{lem}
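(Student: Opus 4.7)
My plan is to give a short spectral proof based on trace identities for the adjacency matrix $A=(a_{ij})$ of $G$, which is a real symmetric $n \times n$ matrix with real eigenvalues $\lambda_1,\dots,\lambda_n$.

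The first step is to rewrite $\eG$ and $\TG$ as traces. Since $(A^2)_{ii}$ equals the degree of $i$, summing and using the handshake lemma gives $\tr(A^2) = \sum_i \lambda_i^2 = 2\eG$. Similarly, $(A^3)_{ii}$ counts closed walks of length $3$ based at $i$, and each triangle $\{u,v,w\}$ contributes $2$ (the two orientations) to each of $(A^3)_{uu}$, $(A^3)_{vv}$, $(A^3)_{ww}$, so $\tr(A^3) = \sum_i \lambda_i^3 = 6\TG$.

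The second step is to bound $\sum_i \lambda_i^3$ by $\sum_i \lambda_i^2$. Set $\mu = \max_i |\lambda_i|$. For each index $i$ with $\lambda_i \geq 0$ we have $\lambda_i^3 \leq \mu\,\lambda_i^2$, and for $\lambda_i < 0$ we have $\lambda_i^3 < 0 \leq \mu\,\lambda_i^2$; summing yields $\sum_i \lambda_i^3 \leq \mu \sum_i \lambda_i^2$. Finally, $\mu^2 \leq \sum_j \lambda_j^2$, so $\mu \leq \sqrt{2\eG}$. Combining these estimates gives $6\TG \leq \mu \cdot 2\eG \leq (2\eG)^{3/2}$, which is exactly the desired inequality.

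There is essentially no serious obstacle here; the only point requiring a little care is that $A$ may have negative eigenvalues, which is why one passes to $|\lambda_i|$ when estimating $\sum_i \lambda_i^3$. As a completely different route, one could instead invoke the Kruskal--Katona theorem, which shows that among graphs with a given number of edges the number of triangles is maximized by taking the edges to lie in a clique, and extracts the same asymptotic bound with the clique as extremizer; but the spectral argument above is both shorter and self-contained.
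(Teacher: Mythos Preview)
Your proof is correct. The paper proves the same inequality by a direct double application of Cauchy--Schwarz to the triple sum $6\TG=\sum_{i,j,k}a_{ij}a_{jk}a_{ki}$: first writing it as $\sum_{i,j}a_{ij}\sum_k a_{jk}a_{ki}$ and applying Cauchy--Schwarz in $(i,j)$, then bounding $(\sum_k a_{jk}a_{ki})^2\le(\sum_k a_{jk})(\sum_l a_{li})$ and factorising. Your spectral argument is genuinely different in spirit: rather than estimating the sum entrywise you diagonalise $A$, recognise $2\eG=\tr(A^2)$ and $6\TG=\tr(A^3)$, and use the trivial bound $\sum_i\lambda_i^3\le \mu\sum_i\lambda_i^2$ together with $\mu^2\le\sum_i\lambda_i^2$. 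Both proofs are equally short; the paper's version stays entirely at the level of $0$--$1$ entries (no spectral theorem needed), whereas yours yields the slightly sharper intermediate inequality $6\TG\le \lambda_{\max}(A)\cdot 2\eG$, which can be useful when more is known about the spectral radius. The remark about Kruskal--Katona as an alternative route is accurate but, as you note, unnecessary here.
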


We use the standard notation where $K_r$ denotes the complete graph on $r$ vertices, and $K_{r,s}$ denotes the complete bipartite graph with $r$ vertices in one part and $s$ vertices in the other part. Now, using the fact that $G$ is the graph that attains the minimum in the definition of $\Phi_{n,p_n,k_n}(\constant +w_n)$ (see \eqref{eqn:1054pm10mar21}), we get 	
	\begin{equation}
	\constant k_n\leq (\constant +w_n)k_n \leq \mathbb{E}_{\sss G}(T) \leq N(K_{1,2},G)\,p_n+N(K_2,G)\,np_n^2+N(K_3,G)+\tfrac16\lambda^3,
	\end{equation}
where $N(H,G)$ is the number of copies of $H$ in $G$, and we recall \eqref{pn-vepn-def}. Note that $N(K_{1,2},G) \leq N(K_2,G) |V(G)|$ (since each edge in $G$ can be in at most $|V(G)|$ many $K_{1,2}$'s). Hence, for some constant $C>0$,
	\begin{equation}
	\begin{aligned}
	\constant k_n &\leq \mathbb{E}_{\sss G}(T) \leq  N(K_2,G) |V(G)|\,p_n+N(K_2,G)\,np_n^2+N(K_3,G)+\tfrac16\lambda^3\\
	&\leq \left(\lambda+ \tfrac{1}{n}\lambda^2\right)N(K_2,G) + N(K_3,G) + \tfrac16 \lambda^3 
	\leq C\left(N(K_2,G)\right)^{3/2},
	\end{aligned}
	\end{equation}
where in the third inequality we use Lemma \ref{lem:1109pm24mar21}. This gives $e_{\sss G} \geq Ck_n^{2/3}$ for some constant $C$. Again using the fact that $G$ is the graph that attains the minimum in the definition of $\Phi_{n,p_n,k_n}(\constant +w_n)$, we get
	\begin{equation}
	\label{eqn:eqn639pm2mar21}
	(\constant +w_n)k_n \leq \mathbb{E}_{\sss G}(T) \leq {n \choose 3} \mathbb{P}_{\sss G}(T\geq \constant k_n)+\constant k_n,
	\end{equation}
and so
	\begin{equation}
	\label{eqn:1109pm10mar21}
	\mathbb{P}_{\sss G}(T\geq \constant k_n) \geq w_n k_n/{n \choose 3}.
	\end{equation}
Therefore, substituting \eqref{eqn:1109pm10mar21} into \eqref{aln:1119am03oct21}, we get
	\begin{equation}
	\label{aln:1119pm10mar21}
	\begin{aligned}
	-\log\mathbb{P}(T\geq \constant k_n) \leq e_{\sss G} \log(1/p_n) + C \log(1/p_n)
	\end{aligned}
	\end{equation}
for some constant $C$. Here we use our assumption that ${n\choose 3}/w_nk_n \leq \left(1/p_n\right)^C$ for some constant $C>0$. By construction, $\Phi_{n,p_n,k_n}(\constant +w_n) = e_{\sss G} \log(1/p_n)$, and we have already observed that $e_{\sss G} \geq Ck_n^{2/3}$. Therefore
	\begin{equation}
	\label{eqn:1127pm10mar21}
	-\log\mathbb{P}(T\geq \constant k_n) \leq (1+\varepsilon_n)\Phi_{n,p_n,k_n}(\constant +w_n)
	\end{equation}
when $\lim_{n\to\infty} \varepsilon_n = 0$ and $\liminf_{n\rightarrow \infty} k_n^{2/3}\varepsilon_n>0.$ This completes the proof of Theorem \ref{thm:437pm28apr21}. We conclude this section with the proof of Lemma \ref{lem:1109pm24mar21}.

\begin{proof}[Proof of Lemma \ref{lem:1109pm24mar21}]
	By Cauchy-Schwarz,
	\begin{equation}
	\begin{aligned}
		6\TG &=\sum_{i,j,k=1}^n a_{ij}a_{jk}a_{ki} =\sum_{i,j=1}^n a_{ij}\sum_{k=1}^n a_{jk}a_{ki}\\
		&\leq \Big(	\sum_{i,j=1}^n a_{ij}\Big)^{1/2} 
		\Big(\sum_{i,j=1}^n \Big(\sum_{k=1}^na_{jk}a_{ki}\Big)^2\Big)^{1/2} \\
		&\leq \Big(	\sum_{i,j=1}^n a_{ij}\Big)^{1/2} \Big(\sum_{i,j=1}^n \sum_{k=1}^na_{jk} 
		\sum_{l=1}^n a_{li}\Big)^{1/2} = \left(2e_{\sss G}\right)^{3/2}.
	\end{aligned}
	\end{equation}
\end{proof}


\section{Upper bound on the large deviations for the number of triangles}
\label{S4}

In this section, we prove the upper bound in Theorem \ref{thm-number-triangles}. The following theorem describes a more precise result:

\begin{thm}{\bf [Large deviation upper bound]}
\label{thm:607pm22mar21}
For every $\delta>0$, and every sequence of positive reals $(w_n)_{n\in\N}$ such that $\lim_{n\to\infty}$ $ w_n = 0$, $\lim_{n\to\infty} (\log{n})/(w_n^2 k_n^{1/3}) = 0$ and $\lim_{n\to\infty}(\log w_n)/$ $\log(1/p_n)=0$,
\begin{equation}
\log \mathbb{P}(T \geq \constant k_n) \leq -(1-\psin)\Phi_{n,p_n,k_n}(\constant-2w_n) + \log(1+\delta)
\end{equation}
for $n$ large enough, where 
\begin{equation}
\label{psindefextra}
\psin = \frac{C' \log{n}}{w_n^2k_n^{1/3}}+\frac{\log[C'w_n^{-5}\log^3(1/p_n)]}{\log(1/p_n)}
\end{equation}
for some constant $C'>0$.
\end{thm}

\begin{rmk}{\bf [Parameters in Theorem \ref{thm:607pm22mar21}]}
In Theorem \ref{thm:607pm22mar21} we require $k_n$ to grow faster than $(\log{n})^{3}$ as $n\to \infty$. On the other hand, the restriction on the perturbation term $w_n$ is somewhat more restrictive. More precisely, we need $w_nk_n^{1/6} \gg (\log n)^{1/2}$. We also need $(\log w_n)/\log(1/p_n)\rightarrow 0$, which implies that $w_n$ cannot converge to zero faster than $n^{-C}$ for any $C>0$. Again, in Theorem \ref{thm:607pm22mar21}, we can pick $\constant=1$ to obtain an estimate on $\mathbb{P}(T \geq k_n)$. \hfill$\blacksquare$
\end{rmk}

The remainder of this section is devoted to the proof of Theorem \ref{thm:607pm22mar21}. This is done in two steps. In the first step, in Section~\ref{S4.1}, we show that if a large number of triangles are present in $G_{n,p_n}$, then the graph contains a particular subgraph called a `seed' (Definition \ref{def-seed} and Lemma \ref{lem:117pm15mar21}), which contains a specific subgraph called a `core' (Definition \ref{def-core} and Lemma \ref{lem:317pm22mar21}). 

We next obtain an upper bound on the number of cores (Lemma \ref{lem:307pm22mar21}). In the second step, in Section ~\ref{S4.2}, we upper bound the probability of $G_{n,p_n}$ having at least $\constant k_n$ many triangles by the probability of $G_{n,p_n}$ having the appropriate number of cores, and then use the estimates obtained in the first step to obtain a sharp upper bound on this probability.


\subsection{Triangles are created by seeds}
\label{S4.1}

In this section, we first show that if $G_{n,p_n}$ contains a large number of triangles, then it must contain a particular subgraph called `seed' with `appropriately high' probability. 

\begin{dfn}{\bf [Seed]}
\label{def-seed}
{\rm For $C$ sufficiently large and for a sequence of positive reals $(w_n)_{n\in\N}$ such that $\lim_{n\to\infty} w_n =0$, we call $G$ a \emph{seed} if the following hold:
\begin{enumerate}
\item[\rm (S1)] $\mathbb{E}_{\sss G}(T) \geq (\constant -w_n)k_n$.
\item[\rm (S2)] $e_{\sss G} \leq {C\constant}{w_n^{-1}}k_n^{2/3}\log(1/p_n)$. \hfill $\spadesuit$
\end{enumerate}
} 
\end{dfn}
\noindent
A seed is defined for all $n$, although later we only need it for $n$ sufficiently large.

The following lemma reduces the study of triangles to the existence of a seed. We obtain a quantitative estimate on the probability that $G_{n,p_n}$ contains a large number of triangles, but does not contain a seed of appropriate size. The proof is an adaptation of \cite{harel2019upper}, which in turn was based on a classical moment argument in \cite{janson2004upper}.

\begin{lem}{\bf [Unlikely not to contain a seed]}
\label{lem:117pm15mar21}
Let $(w_n)_{n\in\N}$ be such that $\lim_{n\to\infty} w_n =0$. Then for every positive integer $\ell\leq \frac{C\constant}{3}{w_n^{-1}}k_n^{2/3}\log(1/p_n)$, and every $\constant>0$,
\begin{equation}
\label{eqn:125pm06mar21}
\mathbb{P}\left(T\geq \constant k_n,\,G_{n,p_n} \text{\rm contains no seed}\right) \leq \left(1-\tfrac{w_n}{\constant}\right)^{\ell}
\end{equation}
for any $(k_n)_{n\in\N}$ such that $\lim_{n\to\infty} k_n = \infty$. Consequently,
	\begin{equation}
	\label{eqn:243pm04oct21}
	\mathbb{P}(T \geq \constant k_n) \leq (1+\xi_n)\,\mathbb{P}(G_{n,p_n} \text{\rm contains a seed}),
	\end{equation}
and
	\begin{equation}
	\label{eqn:242pm04oct21}
	\mathbb{P}\left(T\geq \constant k_n,\,G_{n,p_n} \text{\rm contains no seed}\right) \leq \xi_n \mathbb{P}\left(T\geq \constant k_n\right),
	\end{equation}
where 
	\begin{equation}
	\label{eqn:xin}
	\xi_n = \exp\left([\tfrac12 (6\constant)^{2/3}-\tfrac13 C]\,k_n^{2/3}\log(1/p_n)\right).
	\end{equation}
\end{lem}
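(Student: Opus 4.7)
The plan is to use a falling-factorial moment argument combined with the FKG inequality to handle the anti-monotone nature of the ``no seed'' event. The key preliminary observation is that any subgraph $H \subseteq K_n$ with $e_H \leq 3\ell$ automatically satisfies (S2), provided the constant $C$ in Definition~\ref{def-seed} is chosen large enough (e.g.\ $C \geq 9$); thus, on the event that $G_{n,p_n}$ contains no seed, every such $H \subseteq G_{n,p_n}$ must fail (S1), i.e.\ $\mathbb{E}_H(T) < (\constant - w_n)k_n$. I write $T^{(\ell)} := T(T-1) \cdots (T-\ell+1)$, and since $\ell \ll \constant k_n$ the map $x \mapsto x^{(\ell)}$ is increasing at $\constant k_n$, so on $\{T \geq \constant k_n\}$ we have $T^{(\ell)} \geq (\constant k_n)^{(\ell)}$. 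Markov's inequality then gives
\[
\mathbb{P}(T \geq \constant k_n,\, G_{n,p_n}\text{ contains no seed}) \leq \frac{\mathbb{E}\bigl[T^{(\ell)}\,\mathbf{1}_{\{\text{no seed}\}}\bigr]}{(\constant k_n)^{(\ell)}}.
\]

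The heart of the argument is to bound the numerator by $\bigl((\constant-w_n)k_n\bigr)^{(\ell)}$. Expanding $T^{(\ell)}$ as a sum over ordered $\ell$-tuples of distinct potential triangles $(t_1,\ldots,t_\ell) \in \binom{[n]}{3}^\ell$ and setting $H_i = t_1 \cup \cdots \cup t_i$, I peel off one triangle at a time. Conditionally on $\{H_{i-1} \subseteq G_{n,p_n}\}$, the remaining edges of $G_{n,p_n}$ are still independent Bernoulli$(p_n)$, the event ``no seed'' is decreasing in these edges, and $\{t_i \subseteq G_{n,p_n}\}$ is increasing, so FKG yields
\[
\mathbb{P}(H_i \subseteq G_{n,p_n},\, \text{no seed}) \leq \mathbb{P}(H_{i-1} \subseteq G_{n,p_n},\, \text{no seed}) \cdot p_n^{|E(t_i)\setminus E(H_{i-1})|}.
\]
Summing over $t_i \notin \{t_1,\ldots,t_{i-1}\}$ and noting that each previously chosen $t_j$ lies in $H_{i-1}$ and contributes $1$ to $\mathbb{E}_{H_{i-1}}(T)$,
\[
\sum_{t_i \notin \{t_1,\ldots,t_{i-1}\}} p_n^{|E(t_i) \setminus E(H_{i-1})|} = \mathbb{E}_{H_{i-1}}(T) - (i-1) < (\constant-w_n)k_n - (i-1),
\]
where the strict inequality is used only when $\mathbb{P}(H_{i-1} \subseteq G_{n,p_n},\, \text{no seed}) > 0$, in which case $H_{i-1}$ must satisfy (S2) but fail (S1). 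Iterating this bound $\ell$ times produces the falling factorial $((\constant-w_n)k_n)^{(\ell)}$. Dividing by $(\constant k_n)^{(\ell)}$ and bounding each factor of the ratio by $1-w_n/\constant$ (using $\constant k_n - i \leq \constant k_n$ for $i \geq 0$) yields \eqref{eqn:125pm06mar21}.

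For the consequences \eqref{eqn:243pm04oct21} and \eqref{eqn:242pm04oct21}, I will take $\ell = \lfloor \tfrac{C\constant}{3} w_n^{-1} k_n^{2/3}\log(1/p_n) \rfloor$, so that $(1-w_n/\constant)^\ell \leq \exp(-\tfrac{C}{3} k_n^{2/3}\log(1/p_n))$. Matching this against the lower bound $\mathbb{P}(T \geq \constant k_n) \geq \exp(-(1+o(1))\tfrac12(6\constant)^{2/3} k_n^{2/3}\log(1/p_n))$ from Theorem~\ref{thm:437pm28apr21} identifies the ratio with $\xi_n$ as in \eqref{eqn:xin}, giving \eqref{eqn:242pm04oct21}. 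Then \eqref{eqn:243pm04oct21} follows by decomposing $\mathbb{P}(T \geq \constant k_n)$ according to whether $G_{n,p_n}$ contains a seed and rearranging. The main obstacle I anticipate is precisely the iterative peeling step: threading the global indicator $\mathbf{1}_{\{\text{no seed}\}}$ through the recursion so as to produce the clean falling factorial $((\constant-w_n)k_n)^{(\ell)}$ rather than the weaker $((\constant-w_n)k_n)^\ell$. The decisive point is the subtraction of the $i-1$ previously chosen triangles in the sum over $t_i$, which combines with the (S1)-bound $\mathbb{E}_{H_{i-1}}(T) < (\constant - w_n)k_n$ to deliver exactly the needed decrement at every step; the automatic validity of (S2) throughout the iteration, enforced by choosing $C$ large, is what makes this propagation possible.
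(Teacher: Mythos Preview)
Your argument is correct, but it differs from the paper's in two interlocking ways. First, the paper works with the ordinary moment $T^{\ell}$ rather than the falling factorial $T^{(\ell)}$; the sum over (not necessarily distinct) $\ell$-tuples of triangles then yields the cleaner bound $\mathbb{E}[T^\ell Z]\le ((\constant-w_n)k_n)^\ell$, and division by $(\constant k_n)^\ell$ gives $(1-w_n/\constant)^\ell$ directly, without needing to check monotonicity of $x\mapsto x^{(\ell)}$ or to bound the ratio of falling factorials term by term. Second, and more substantively, the paper avoids FKG altogether: instead of carrying the global decreasing indicator $Z=\mathbf{1}\{\text{no seed}\}$ through the recursion via a correlation inequality, it replaces $Z$ by the \emph{local} indicator $Z_{t_1\cup\cdots\cup t_{k-1}}$ (that $t_1\cup\cdots\cup t_{k-1}\cap G_{n,p_n}$ contains no seed), using the monotonicity $Z=Z_{K_n}\le Z_{t_1\cup\cdots\cup t_{k-1}}$. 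This local indicator becomes deterministic once $Y_{t_1}\cdots Y_{t_{k-1}}=1$, so the conditional expectation $\sum_{t_k}\mathbb{E}(Y_{t_k}\mid \cdots)=\mathbb{E}_{t_1\cup\cdots\cup t_{k-1}}(T)$ is immediate. Your FKG route is equally valid and arguably more transparent about why the anti-monotone event cooperates; the paper's localization trick (from Janson--Oleszkiewicz--Ruci\'nski via Harel--Mousset--Samotij) is more elementary in that it needs only independence, not a correlation inequality.

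One small point on the consequences: your derivation of \eqref{eqn:243pm04oct21} by decomposing $\mathbb{P}(T\ge\constant k_n)$ and invoking \eqref{eqn:242pm04oct21} yields $\mathbb{P}(T\ge\constant k_n)\le \tfrac{1}{1-\xi_n}\mathbb{P}(\text{seed})$ rather than $(1+\xi_n)\mathbb{P}(\text{seed})$. The paper instead compares $(1-w_n/\constant)^\ell$ directly to the probability of a clique of size $(6\constant k_n)^{1/3}$, which is simultaneously a seed and an event implying $T\ge\constant k_n$; this gives both \eqref{eqn:242pm04oct21} and \eqref{eqn:243pm04oct21} with the stated $\xi_n$ in one stroke.
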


\begin{proof} 
We use a bound on the moments of the number of triangles, on the event that $G_{n,p_n}$ does not contain a seed. For this, let $Z$ be the indicator of the event that $G_{n,p_n}$ does not contain a seed. Fix a positive integer $\ell$ that satisfies $\ell\leq \frac13 C\constant w_n^{-1}k_n^{2/3}\log(1/p_n) $. Then $TZ\geq 0$ and $Z^{\ell} = Z$, so that, by Markov's inequality,
	\begin{equation}
	\label{eqn:944ammar21}
	\mathbb{P}\left(T\geq \constant k_n,\,G_{n,p_n} \text{ contains no seed}\right) 
	= \mathbb{P}\left(TZ\geq \constant k_n\right) \leq \frac{\mathbb{E}\left(T^{\ell}Z\right)}{(\constant k_n)^{\ell}}.
	\end{equation}
Also note that we can write $T=\sum_t Y_t$, where the sum runs over all triangles in $K_n$, and $Y_t$ is the indicator that $t$ belongs to $G_{n,p_n}$. 

Now let $Z_{\sss G}$ be the indicator of the event that $G \cap G_{n,p_n}$ does not contain a seed. This gives $Z_{\sss G} \leq Z_{\sss G'}$ when $G' \subseteq G$. Also, since $Z=Z_{\sss K_n}$ for $k \in  [\ell]$, we get
	\begin{equation}
	\begin{aligned}
	\mathbb{E}(T^kZ)
	&= \sum_{t_1,\ldots,t_k}\mathbb{E}(Y_{t_1}Y_{t_2}\cdots Y_{t_k}Z) \\
	&\leq \sum_{t_1,\ldots,t_k}\mathbb{E}(Y_{t_1}Y_{t_2}\cdots Y_{t_k}Z_{t_1\cup\cdots  \cup t_k}) \\
	& \leq \sum_{t_1,\ldots,t_{k-1}}\mathbb{E}(Y_{t_1}Y_{t_2}\cdots Y_{t_{k-1}}Z_{t_1\cup\cdots  \cup t_{k-1}} ) 
	\sum_{t_k} \mathbb{E}(Y_{t_{k}} \mid Y_{t_1}Y_{t_2}\cdots Y_{t_{k-1}}Z_{t_1\cup\cdots  \cup t_{k-1}}=1).
	\end{aligned}
	\end{equation}	
Note that the first sum runs over all $t_1,\cdots, t_{k-1}$ such that the event $Y_{t_1}Y_{t_2}\cdots Y_{t_{k-1}}Z_{t_1\cup\cdots  \cup t_{k-1}}=1$ has positive probability. This implies that $t_1\cup\cdots  \cup t_{k-1}$ (seen as a graph containing the edges and vertices in $t_i$ for $i\in[k-1]$) does not contain a seed. Also note that $e_{t_1\cup\cdots  \cup t_{k-1}} \leq 3(k-1) \leq 3\ell \leq C\constant w_n^{-1}k_n^{2/3}\log(1/p_n)$, and so
	\begin{equation}
	\sum_{t_k}\mathbb{E}( Y_{t_{k}} \mid Y_{t_1}Y_{t_2}\cdots Y_{t_{k-1}}=1)
	= \mathbb{E}_{t_1\cup\cdots  \cup t_{k-1}}(T) <(\constant -w_n)k_n,
	\end{equation}
because otherwise $t_1\cup\cdots  \cup t_{k-1}$ would be a seed. Therefore
	\begin{equation}
	\sum_{t_1,\ldots,t_k}\mathbb{E}(Y_{t_1}Y_{t_2}\cdots Y_{t_k}Z_{t_1\cup\cdots  \cup t_k})
	< (\constant -w_n)k_n \sum_{t_1,\ldots,t_{k-1}}\mathbb{E}(Y_{t_1}Y_{t_2}\cdots Y_{t_{k-1}}Z_{t_1\cup\cdots  \cup t_{k-1}}).
	\end{equation}
Repeating this argument, we obtain $\mathbb{E}(T^{\ell} Z) \leq (\constant -w_n)^{\ell} k_n^{\ell}$. Therefore, using \eqref{eqn:944ammar21}, we can write
	\begin{equation}
	\label{eqn:959ammar21}
	\mathbb{P}\left(T\geq \constant k_n,\, G_{n,p_n} \text{ contains no seed}\right)  
	\leq \left(1-\tfrac{w_n}{\constant}\right)^{\ell}.
	\end{equation}
Finally, putting $\ell = \frac13 C\constant w_n^{-1}k_n^{2/3}\log(1/p_n)$, we get
	\begin{equation}
	\left(\tfrac{\constant-w_n}{\constant}\right)^{\tfrac13 C\constant w_n^{-1}k_n^{2/3}\log(1/p_n)} 
	= \exp\left(\tfrac13 C\constant w_n^{-1}k_n^{2/3}\log(1/p_n) \log{\left(\tfrac{\constant-w_n}{\constant}\right)}\right).
	\end{equation}
Since $\log(1-x) \leq -x$, $x \in (0,1)$, we arrive at
	\begin{equation}
	\label{eqn:204pm25mar21}
	\left(\tfrac{\constant-w_n}{\constant}\right)^{\ell} \leq  \exp\left(-\tfrac13 Ck_n^{2/3}\log(1/p_n) \right) 
	\leq \xi_n\,\mathbb{P}\left(G_{n,p_n} \text{ contains a clique of size }(6\constant k_n)^{1/3}\right),
	\end{equation}
where $\xi_n$ is given in \eqref{eqn:xin}. Note that a clique of size $(6\constant k_n)^{1/3}$ is a seed because $\binom{(6ak_n)^{1/3}}{2} \leq k_n^{2/3}\log (1/p_n)/w_n$ when $w_n=o(1)$, and such a clique contains $\constant k_n$ many triangles. This concludes the proofs of \eqref{eqn:242pm04oct21} and \eqref{eqn:243pm04oct21}.
\end{proof}

It turns out that the existence of a seed implies existence of a minimal type of seed that we call a {\em core}:

\begin{dfn}{\bf [Core]} 
\label{def-core}
{\rm A graph $G^*$  is called a {\em core} if the following conditions hold:
\begin{enumerate}
\item[(C1)] $\mathbb{E}_{G^*}(T)\geq (\constant-2w_n)k_n$.
\item[(C2)] $e_{\sss G^*} \leq {C\constant}{w_n^{-1}}k_n^{2/3}\log(1/p_n)$.
\item[(C3)] $\min_{e \in E(G^*)}(\mathbb{E}_{G^*}(T)-\mathbb{E}_{G^*\setminus e}(T)) \geq w_n^2 \frac{k_n^{1/3}}{C\constant \log(1/p_n)}$.
\end{enumerate}
Moreover, we call a core $G^*$ an {\em $m$-core} when it has exactly $m$-edges. \hfill $\spadesuit$}
\end{dfn}

Intuitively, a core can be thought of as a {\em minimal} version of a seed. We will shortly see that obtaining an upper bound on the number of cores is a relatively accessible task. First, let us show that every seed $G$ contains a core $G^*$:

\begin{lem}{\bf [Seeds contain cores]}
\label{lem:317pm22mar21}
Every seed $G$ contains a core $G^*$.
\end{lem}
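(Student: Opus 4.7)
The plan is to construct $G^*$ from $G$ by a greedy edge-deletion procedure: iteratively remove any edge whose marginal contribution to $\mathbb{E}_{\sss G}(T)$ is too small, and stop when no such edge remains. Concretely, set $\tau_n := w_n^2 k_n^{1/3}/(C\constant \log(1/p_n))$ and $G^{(0)} := G$. At step $t \geq 0$, if there exists an edge $e \in E(G^{(t)})$ with $\mathbb{E}_{G^{(t)}}(T)-\mathbb{E}_{G^{(t)}\setminus e}(T) < \tau_n$, define $G^{(t+1)} := G^{(t)}\setminus e$; otherwise stop and set $G^* := G^{(t)}$. Since $G$ has finitely many edges, the process terminates after some number of steps $\ell \leq e_{\sss G}$, and (C3) holds for $G^*$ by the stopping rule.

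Property (C2) is immediate, since removing edges can only decrease the edge count: $e_{\sss G^*} = e_{\sss G} - \ell \leq e_{\sss G} \leq C\constant w_n^{-1} k_n^{2/3}\log(1/p_n)$ by (S2). The only nontrivial point is verifying (C1). Summing telescopically, the total loss incurred is
\begin{equation}
\mathbb{E}_{\sss G}(T)-\mathbb{E}_{\sss G^*}(T) = \sum_{t=0}^{\ell-1}\bigl(\mathbb{E}_{G^{(t)}}(T)-\mathbb{E}_{G^{(t+1)}}(T)\bigr) < \ell\,\tau_n.
\end{equation}
Using $\ell \leq e_{\sss G} \leq C\constant w_n^{-1}k_n^{2/3}\log(1/p_n)$ and the choice of $\tau_n$, the right-hand side is at most $w_n k_n$. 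Combining this with (S1) yields
\begin{equation}
\mathbb{E}_{\sss G^*}(T) \geq \mathbb{E}_{\sss G}(T) - w_n k_n \geq (\constant - 2w_n)k_n,
\end{equation}
which is exactly (C1). Hence $G^*$ is a core.

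The argument has no real obstacle — the key is simply that the budget allowed by (C1)–(S1), namely $w_n k_n$, matches the product of the maximum number of edges one could ever remove, bounded by (S2), times the per-edge threshold $\tau_n$ dictated by (C3); the constants in (C2) and (C3) were in fact chosen so that this accounting closes. The only thing worth double-checking during the write-up is that $\mathbb{E}_{\sss G}(T)$ has the right monotonicity in $G$ (it is increasing in the edge set of $G$), which follows at once from the explicit formula $\mathbb{E}_{\sss G}(T) = \sum_{\{i,j,k\}} p_n^{3-|\{ij,jk,ik\}\cap E(G)|}$ obtained from conditioning on $G \subseteq G_{n,p_n}$. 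This monotonicity guarantees that each marginal contribution appearing in the algorithm is non-negative, so the telescoping bound is a genuine bound on the loss in expected triangle count.
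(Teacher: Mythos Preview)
Your proof is correct and follows essentially the same approach as the paper: a greedy edge-deletion procedure that removes edges with small marginal contribution to $\mathbb{E}_{\sss G}(T)$, then bounds the total loss via telescoping and the edge bound (S2). Your additional remark on the monotonicity of $\mathbb{E}_{\sss G}(T)$ is a nice touch that the paper leaves implicit.
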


\begin{proof}
We start with a seed $G$, and recursively remove edges to create a core. For this, we define a sequence of graphs $G=G_0\supseteq G_1 \supset \cdots \supseteq G_s =G^*$. Here, $G_{i+1} = G_i\setminus e$ for some edge in $e \in G_i$ such that $\mathbb{E}_{\sss G_i}(T)-\mathbb{E}_{\sss G_i\setminus e}(T)< w_n^2 \frac{k_n^{1/3}}{C\constant \log(1/p_n)}$. We continue deleting edges as long as there is such an $e$. We stop when such an $e$ no longer exists, and denote the resulting graph by $G^*$. Therefore, $G^*$ satisfies Definition \ref{def-core} (C3) and (C2) by construction. Also note that the number $s$ of edges removed in the above procedure satisfies $s \leq e_{\sss G} \leq  {C\constant}{w_n^{-1}}k_n^{2/3}\log(1/p_n)$ since $G$ is a seed, and therefore
	\begin{equation}
	\label{eqn:115pm16mar21}
	\mathbb{E}_{\sss G}(T)-\mathbb{E}_{\sss G^*}(T) 
	= \sum_{i=0}^{s-1} \left(\mathbb{E}_{\sss G_{i}}(T)-\mathbb{E}_{\sss G_{i+1}}(T)\right)<s w_n^2
	\frac{k_n^{1/3}}{C\constant \log(1/p_n)}\leq w_nk_n.
	\end{equation}
Now, since $G$ is a seed, we have $\mathbb{E}_{\sss G}(T) \geq (\constant -w_n)k_n$. Combining this with \eqref{eqn:115pm16mar21}, we get $\mathbb{E}_{\sss G^*}(T)\geq (\constant-2w_n)k_n$, which is Definition \ref{def-core}(C1).
\end{proof}

In the following lemma we obtain an upper bound on the number of $m$-cores that plays a crucial role in the proof of Theorem \ref{thm:607pm22mar21}:

\begin{lem}{\bf [Bound on the number of cores]}
\label{lem:307pm22mar21}
Let $\lim_{n\rightarrow \infty}\frac{\log{n}}{w_n^2k_n^{1/3}}=0$. Then the number of $m$-cores is bounded above by $(1/p_n)^{m \Psi_n}$, where
	\begin{equation}
	\label{eqn:436pm21mar21}
	\Psi_n= \frac{1}{\log(1/p_n)} \left(\tfrac{C'}{t_n} \log {n} +\log\left(C'\tfrac{m}{t_n^2}\right)\right) \quad
	\text{  and  } \quad t_n := w_n^2 \frac{k_n^{1/3}}{C\constant \log(1/p_n)}.
	\end{equation}
\end{lem}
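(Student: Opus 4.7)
My plan is to bound the number of $m$-cores by constructing an economical encoding of each core and counting the number of encodings.

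First, I translate (C3) into a local degree constraint. Writing $f(G) := \mathbb{E}_G(T)$ and expanding $f$ as a multilinear polynomial in the edge indicators, one computes
\[
f(G^*) - f(G^*\setminus e) = (n-2)p_n^2(1-p_n) + p_n(1-p_n)^2(d_u+d_v-2) + (1-p_n)^3 \tau(e)
\]
for each edge $e = uv \in E(G^*)$, where $d_u, d_v$ denote the degrees of $u,v$ in $G^*$ and $\tau(e)$ is the number of triangles of $G^*$ through $e$. Since $p_n = \lambda/n$ the first summand is $O(1/n)$, negligible compared to $t_n \to \infty$ (which is guaranteed by the hypothesis $(\log n)/(w_n^2 k_n^{1/3}) \to 0$). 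Condition (C3) then gives $p_n(d_u+d_v) + \tau(e) \geq t_n/2$ for every edge of $G^*$, and combining with the trivial bound $\tau(e) \leq \min(d_u, d_v)$ (assuming $d_u \leq d_v$) yields the dichotomy: either $d_u \geq t_n/4$, or $d_v \geq t_n/(8p_n)$.

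Next, define
\[
H := \{v \in V(G^*) : d_v \geq t_n/4\}, \qquad L := \{v \in V(G^*) : d_v \geq t_n/(8p_n)\} \subseteq H,
\]
so that by the handshake lemma $|H| \leq 8m/t_n$ and $|L| \leq 16 m p_n/t_n$. The dichotomy implies that every edge of $G^*$ either has both endpoints in $H$ or joins $L$ to $V(G^*)\setminus H$. The crucial quantitative observation is that the number $m_{\mathrm{bip}}$ of the latter ``bipartite'' edges satisfies
\[
m_{\mathrm{bip}} \leq \sum_{v\in L} d_v \leq |L|(n-1) \leq 16\lambda m/t_n = O(m/t_n),
\]
where the final inequality uses $p_n = \lambda/n$.

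Finally, I encode each $m$-core by specifying (a) the set $H$, (b) the $m_{HH} := m - m_{\mathrm{bip}}$ edges of $G^*$ inside $H$, and (c) the $m_{\mathrm{bip}}$ bipartite edges. For fixed $|H|$ and $m_{\mathrm{bip}}$ the number of encodings is at most
\[
\binom{n}{|H|}\binom{\binom{|H|}{2}}{m_{HH}}\binom{|H|(n-|H|)}{m_{\mathrm{bip}}} \leq \left(\frac{en}{|H|}\right)^{|H|}\left(\frac{e|H|^2}{2m_{HH}}\right)^{m_{HH}}\left(\frac{e|H|n}{m_{\mathrm{bip}}}\right)^{m_{\mathrm{bip}}}.
\]
Plugging in $|H|, m_{\mathrm{bip}} = O(m/t_n)$ (so $m_{HH} \geq m/2$ for $n$ large), these three factors are bounded respectively by $n^{O(m/t_n)}$, $(Cm/t_n^2)^m$, and $n^{O(m/t_n)}$; summing over the polynomially many pairs $(|H|, m_{\mathrm{bip}})$ contributes only a sub-dominant multiplicative factor, yielding the required bound $(1/p_n)^{m\Psi_n}$. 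The main obstacle, and the heart of the argument, is the structural reduction $m_{\mathrm{bip}} = O(m/t_n)$ in the second step; without this, the factor $n^{m_{\mathrm{bip}}}$ in the encoding would grow like $n^m$ and the estimate would collapse to the trivial bound $\binom{\binom{n}{2}}{m}$.
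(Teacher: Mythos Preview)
Your proof is correct, but it takes a longer route than the paper's. The paper exploits one extra observation that collapses your dichotomy entirely: since $d_u+d_v\leq 2(n-1)$ trivially, the middle term $p_n(1-p_n)^2(d_u+d_v-2)$ in your expansion is at most $2\lambda$, a constant. Together with $(n-2)p_n^2(1-p_n)=O(1/n)$, condition (C3) then yields $\tau(e)\geq t_n-O(1)$ directly for \emph{every} edge $e$, and hence $\min(d_u,d_v)\geq\tau(e)\geq t_n-O(1)$: both endpoints of every edge lie in a single ``high-degree'' set $A_{G^*}$ of size $\ell\leq 2m/(t_n-O(1))$. Equivalently, your second alternative $d_v\geq t_n/(8p_n)=nt_n/(8\lambda)$ is vacuous once $t_n>8\lambda$, since degrees cannot exceed $n-1$; so $L=\emptyset$ and $m_{\mathrm{bip}}=0$ for all large $n$.

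With this simplification the encoding reduces to specifying the set $A_{G^*}$ and the $m$ edges inside it, giving the clean count $\binom{n}{\ell}\binom{\ell^2/2}{m}\leq n^{\ell}(e\ell^2/2m)^m$ with $\ell=O(m/t_n)$, which is exactly the paper's argument. Your more elaborate encoding still reaches the right bound because you correctly control $m_{\mathrm{bip}}=O(m/t_n)$, so the extra bipartite factor contributes only another $n^{O(m/t_n)}$; but the machinery is unnecessary in the regime $p_n=\lambda/n$ and would only be genuinely needed if $p_n$ decayed more slowly than $1/n$.
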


\begin{proof}
For an $m$-core $G^*$, let $A_{\sss G^*} \subseteq V(G^*)$ be the set of vertices with degree at least $t_n -C(\lambda)$, where $C(\lambda)=\lambda+\lambda^2$. Since $G^*$ has $m$ edges, 
	\begin{equation}
	\label{eqn:1145am17mar21}
	\left|A_{\sss G^*}\right|\leq 2m\left(t_n-C(\lambda)\right)^{-1} :=\ellalt.
	\end{equation}
We will show that the endpoints of every edge in $G^*$ lie in $A_{\sss G^*}$. (Note that $A_{\sss G^*} $ does not have to equal $V(G^*)$, because the addition of an isolated vertex need not destroy the core property.) To see why, fix an edge $e \in E(G^*)$. For every non-empty graph $F \subseteq K_3$, let $N(F,G^*,e)$ be the number of (induced unlabelled) copies of $F$ in $G^*$ containing $e$. Then
	\begin{equation}
	\label{eqn:1152am17mar21}
	\mathbb{E}_{G^*}(T)-\mathbb{E}_{G^*\setminus e}(T) \leq \Big(N(K_3,G^*,e) + N(K_{1,2},G^*,e)\,p_n + np_n^2\Big)(1-p_n).
	\end{equation}
By Definition \ref{def-core}(C3),
	\begin{equation}
	\label{eqn:1154am17mar21}
         t_n\leq N(K_3,G^*,e) + N(K_{1,2},G^*,e)\,p_n +np_n^2.
	\end{equation}
Since $p_n=\lambda/n$, and since there can be at most $n$ many $K_{1,2}$'s that contains a given edge $e$, we get $N(K_3,G^*,e) \geq t_n -C(\lambda)$. Moreover, if $e=uv$, then $N(K_3,G^*,e) \leq \min\{d_u,d_v\}$, where $d_u$ and $d_v$ are the degree of $u$ and $v$ in $G^*$ respectively. Therefore $\min\{d_u,d_v\} \geq  t_n -C(\lambda)$. Thus, both $u$ and $v$ are in $A_{\sss G^*}$.
	
For a set $A \subseteq [n]$, the number of $m$-cores $G^{*}$ that satisfy $A_{\sss G^{*}}\subseteq A$ is bounded above by ${{\ellalt^2}/{2} \choose m}$, where $|A|=\ellalt$. Therefore the number of cores with $m$ edges is bounded above by
	\begin{equation}
	\label{eqn:246pm21mar21}
	{n \choose \ellalt} {{\ellalt^2}/{2} \choose m} \leq n^{\ellalt}\left(\frac{\mathrm{e}\ellalt^2}{2m}\right)^{m}.
	\end{equation}
Since $\lim_{n\rightarrow \infty}\frac{\log{n}}{w_n^2k_n^{1/3}}=0$, we have $\lim_{n\rightarrow \infty}t_n =\infty$. Thus, $\ell\leq C' \frac{m}{t_n}$ for large enough $n$, and therefore
	\begin{equation*}
	n^\ellalt =\exp\left(\ellalt \log {n}\right) \leq \exp\left( C'  \tfrac{m}{t_n} \log {n}\right),
	\end{equation*}
and 
	\begin{equation*}
	\left(\tfrac{\mathrm{e}\ellalt^2}{2m}\right)^{m} = \exp\left(m\log\left(\tfrac{\mathrm{e}\ellalt^2}{2m}\right)\right) 
	\leq \exp\left(m\log\left(C'\tfrac{m}{t_n^2}\right)\right).
	\end{equation*}
Therefore the number of $m$-cores is at most $(1/p_n)^{m \Psi_n}$, where $\Psi_n$ is defined in \eqref{eqn:436pm21mar21}.
\end{proof}


\subsection{Upper bound on the upper tail probability}
\label{S4.2}

In this section, we use the lemmas proved in Section \ref{S4.1} to obtain a sharp bound on the upper tail probability, and thereby prove Theorem \ref{thm:607pm22mar21}.

\begin{proof}[Proof of Theorem \ref{thm:607pm22mar21}]
Fix a constant $C'>0$, and define $\psi_n$ as in \eqref{psindefextra}. By Lemma \ref{lem:117pm15mar21}, Lemma \ref{lem:317pm22mar21} and a union bound,
	\begin{equation}
	\begin{aligned}
	\mathbb{P}(T \geq \constant k_n) 
	&\leq (1+\xi_n)\mathbb{P}(G_{n,p_n} \text{ contains a seed}) \\
	&\leq (1+\xi_n)\mathbb{P}(G_{n,p_n} \text{ contains a core}) \\
	&\leq(1+\xi_n)\sum_{m \in \N_0} \mathbb{P}(G_{n,p_n} \text{ contains an } m\text{-core}).
	\end{aligned}
	\end{equation}
Again via a union bound and Lemma \ref{lem:307pm22mar21},
	\begin{equation}
	\label{eqn:327pm22mar21}
	\begin{aligned}
	\mathbb{P}(T \geq \constant k_n) 
	&\leq	(1+\xi_n)\sum_{m \in \N_0} p_n^{m}\,|\,\{G^*\subseteq K_n\colon\, G^*\text{ is an } m \text{-core}\}\,|\\
	&\leq (1+\xi_n)\sum_{m \geq m_{\min}} p_n^{(1-\Psi_n)m}.
	\end{aligned}
	\end{equation} 
Here, $m_{\min}$ is the minimum number of edges in the core. We next prove a lower bound of $m_{\min}$. First, recall from \eqref{eqn:1054pm10mar21} that $\Phi_{n,p_n,k_n} (\constant-2w_n) \leq e_{\sss G}\log(1/p_n)$ for any graph $G$ with $\mathbb{E}_{\sss G}(T)\geq (\constant-2w_n)k_n$. Further, since by Definition \ref{def-core}(C1), $\mathbb{E}_{\sss G^*}(T)\geq (\constant-2w_n)k_n$, we get  $\Phi_{n,p_n,k_n} (\constant-2w_n) \leq m_{\min}\log(1/p_n)$, and therefore 
	\begin{equation} 
	m_{\min} \geq \frac{\Phi_{n,p_n,k_n} (\constant-2w_n)}{\log(1/p_n)}.
	\end{equation} 
Also, using the fact that $m\leq C\constant w_n^{-1}k_n^{2/3}\log(1/p_n)$, and substituting $t_n := w_n^2 \frac{k_n^{1/3}}{C\constant \log(1/p_n)}$, we get
	\eqan{
	\label{eqn:242pm25mar21}
	\Psi_n &= \frac{1}{\log(1/p_n)}\left( \frac{1}{t_n} \log n 
	+ \log\left(C'\tfrac{m}{t_n^2}\right)\right) \leq \psin.
	}
By our assumptions we have $\psin=o(1)$. Now using \eqref{eqn:327pm22mar21}, we obtain
\begin{equation}
\label{eqn:505pm22mar21}
\begin{aligned}
\mathbb{P}(T \geq \constant k_n) &\leq (1+\xi_n) \frac{p_n^{(1-\psin)m_{\min}}}{1-p_n^{(1-\psin)}} 
\leq(1+\varepsilon)\exp\left(-(1-\psin) m_{\min}\log(1/p_n)\right)\\
&\leq (1+\varepsilon)\exp\left(- (1-\psin)\Phi_{n,p_n,k_n}(\constant-2w_n)\right),
\end{aligned}
\end{equation}
as required.
\end{proof}


\section{The variational problem}
\label{S5}

In this section, we prove \eqref{Phi-asymptotics} in Theorem \ref{thm-number-triangles}. Note that an explicit solution of the variational problem in \eqref{eqn:1054pm10mar21} would give rise to an explicit asymptotic formula for the upper tail probabilities in Theorems \ref{thm:437pm28apr21} and \ref{thm:607pm22mar21}. Indeed, we have already expressed the upper and lower bounds on the logarithmic probability of the event $\{T\geq \constant k_n\}$ in terms of $\Phi_{n,p,k}(\constant)$ with appropriate choices of the parameters. In this section, we obtain sharp estimates of the quantity $\Phi_{n,p,k}(\constant)$ for choices of parameters that cover both Theorems \ref{thm:437pm28apr21} and \ref{thm:607pm22mar21}. 

The following theorem provides asymptotically sharp upper and lower bounds on $\Phi_{n,p_n,k_n}(\constant)$ under a milder condition than those needed in Theorems \ref{thm:437pm28apr21} and \ref{thm:607pm22mar21}.

\begin{thm}{\bf [Asymptotics for the variational problem]}
\label{thm:944pm28apr21}
$\mbox{}$\\ 
For every sequence of positive reals $(w_n)_{n\in\N}$ such that $\lim_{n\to\infty} w_n = 0$ and $\lim_{n\to\infty}$ $w_nk_n^{1/3}=\infty$, for $n$ large enough,
	\begin{equation}
	\tfrac12[6\constant(1-w_n)k_n]^{2/3} \log(1/p_n) \leq \Phi_{n,p_n,k_n}(\constant) 
	\leq \tfrac12[6\constant(1+w_n)k_n]^{2/3} \log (1/p_n).
	\end{equation}
\end{thm}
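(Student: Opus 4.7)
The plan is to prove the upper bound by exhibiting a clique that witnesses the required expected number of triangles, and to prove the lower bound by combining the Kruskal--Katona-type estimate from Lemma \ref{lem:1109pm24mar21} with an explicit expansion of $\mathbb{E}_{\sss G}(T)$ in the number of edges of $G$.

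For the upper bound, I would let $r$ be the smallest positive integer with $\binom{r}{3} \geq \constant k_n$ and take $G = K_r$ embedded in $K_n$. Then $\mathbb{E}_{\sss G}(T) \geq N(K_3,G) = \binom{r}{3} \geq \constant k_n$, so $G$ is admissible in \eqref{eqn:1054pm10mar21}. From the minimality of $r$, one reads off $r = (6\constant k_n)^{1/3}(1 + O(k_n^{-1/3}))$, and hence $\eG = \binom{r}{2} \leq \tfrac{1}{2}(6\constant k_n)^{2/3}(1 + O(k_n^{-1/3}))$. Since $w_n k_n^{1/3}\to\infty$ by hypothesis, $k_n^{-1/3} = o(w_n)$, and this multiplicative error is absorbed into $(1+w_n)^{2/3}$, giving the stated upper bound on $\Phi_{n,p_n,k_n}(\constant)$.

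For the lower bound, let $G \subseteq K_n$ satisfy $\mathbb{E}_{\sss G}(T) \geq \constant k_n$. Grouping triangles $t$ of $K_n$ by $|E(t) \cap E(G)| \in \{0,1,2,3\}$, and using $\#\{t : |E(t)\cap E(G)|=2\} \leq N(K_{1,2}, G)$, $\#\{t : |E(t)\cap E(G)|=1\} \leq \eG(n-2)$, and $\#\{t : |E(t) \cap E(G)| = 0\} \leq \binom{n}{3}$, I obtain
\begin{equation*}
\mathbb{E}_{\sss G}(T) \leq N(K_3,G) + N(K_{1,2},G)\, p_n + \eG(n-2)\, p_n^2 + \tbinom{n}{3} p_n^3.
\end{equation*}
Using $N(K_{1,2},G) = \sum_v \binom{d_v}{2} \leq (n-1)\eG$, where $d_v$ is the degree of $v$ in $G$, and $p_n = \lambda/n$, the right-hand side is at most $N(K_3,G) + \lambda \eG + \lambda^2 \eG/n + \lambda^3/6$. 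A short case analysis (if $\eG \leq c k_n^{2/3}$ for a sufficiently small constant $c$, then all four terms are $o(k_n)$, contradicting $\mathbb{E}_{\sss G}(T) \geq \constant k_n$) yields $\eG = \Omega(k_n^{2/3})$, so the correction terms amount to $O(k_n^{2/3}) + O(1) = o(w_n k_n)$ by $w_n k_n^{1/3}\to\infty$. Combining with Lemma \ref{lem:1109pm24mar21} (which gives $N(K_3, G) \leq \tfrac{1}{6}(2\eG)^{3/2}$), I obtain $\tfrac{1}{6}(2\eG)^{3/2} \geq \constant k_n - o(w_n k_n) \geq \constant(1-w_n) k_n$ for $n$ large, and rearranging yields $\eG \geq \tfrac{1}{2}(6\constant(1-w_n)k_n)^{2/3}$, which is the desired lower bound on $\Phi_{n,p_n,k_n}(\constant)/\log(1/p_n)$.

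The main technical point I anticipate is the bookkeeping of the correction terms in the expansion of $\mathbb{E}_{\sss G}(T)$, in particular ensuring that the term $\lambda \eG = O(k_n^{2/3})$ stays negligible compared to $w_n \constant k_n$; the hypothesis $w_n k_n^{1/3}\to\infty$ is precisely what makes $k_n^{-1/3} = o(w_n)$ and thus absorbs both the integer-rounding error in the clique construction and the expansion error in the lower bound into the factor $(1 \pm w_n)^{2/3}$.
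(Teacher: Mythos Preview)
Your approach is essentially the same as the paper's: a clique for the upper bound, and the expansion of $\mathbb{E}_{\sss G}(T)$ combined with Lemma~\ref{lem:1109pm24mar21} for the lower bound. The paper even uses the same crude bound $N(K_{1,2},G)\le |V(G)|\,e_{\sss G}$ (equivalent to your $(n-1)e_{\sss G}$).

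There is one logical slip in your lower bound. From ``$e_{\sss G}=\Omega(k_n^{2/3})$'' you write ``so the correction terms amount to $O(k_n^{2/3})$''. A lower bound on $e_{\sss G}$ does not control $\lambda e_{\sss G}$ from above; for a generic $G$ with $\mathbb{E}_{\sss G}(T)\ge \constant k_n$ the edge count can be arbitrarily large, so the inequality $\tfrac16(2e_{\sss G})^{3/2}\ge \constant k_n-\lambda e_{\sss G}-\cdots$ may be vacuous. The fix is exactly what the paper does: argue by contradiction. Assume $e_{\sss G}\le \tfrac12(6\constant(1-w_n)k_n)^{2/3}$; then $\lambda e_{\sss G}=O(k_n^{2/3})=o(w_nk_n)$ and, by Lemma~\ref{lem:1109pm24mar21}, $N(K_3,G)\le \constant(1-w_n)k_n$, forcing $\mathbb{E}_{\sss G}(T)<\constant k_n$. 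With this framing your argument is complete, and the preliminary step establishing $e_{\sss G}=\Omega(k_n^{2/3})$ becomes unnecessary. (Also, ``all four terms are $o(k_n)$'' is not quite right: $N(K_3,G)$ can be $\Theta(k_n)$ with a small constant when $e_{\sss G}\le ck_n^{2/3}$; what you mean is that the sum falls below $\constant k_n$ for $c$ small.)
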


\proof
Let $(w_n)_{n\in\N}$ be such that $\lim_{n\to\infty} w_n = 0$ and $\lim_{n\to\infty} w_nk_n^{1/3} = \infty$. Note that if $G$ is a complete graph on $(6\constant(1+w_n)k_n)^{1/3}$ vertices, then $\mathbb{E}_{\sss G}(T)\geq \constant k_n$ for $n$ sufficiently large. Moreover, we can find such a complete graph even when $6\constant k_n$ is not a cube of some number, since the nearest cube larger than $6\constant k_n$ is within $6\constant k_n+C'k_n^{2/3}$ for some constant $C'$. Therefore,
	\begin{equation}
	\label{eqn:411pm23mar21}
	\Phi_{n,p_n,k_n}(\constant) \leq \tfrac12 (6\constant(1+w_n)k_n)^{2/3}\log(1/p_n).
	\end{equation}
For the lower bound, suppose that $G$ is a graph that satisfies $\mathbb{E}_{\sss G}(T)\geq \constant k_n$. Then $\Phi_{n,p,k}(\constant) \geq e_{\sss G} \log\left(1/p\right)$. We are now left to find an appropriate lower bound on $e_{\sss G}$. Since $\mathbb{E}_{\sss G}(T)\geq \constant k_n$,
	\begin{equation}
	\constant k_n \leq \mathbb{E}_{\sss G}(T) \leq N(K_{1,2},G)\,p_n+N(K_2,G)\,np_n^2+N(K_3,G)+ \tfrac16 \lambda^3.
	\end{equation}
Here, $N(K_{1,2},G) \leq N(K_2,G) |V(G)|$ (since each edge in $G$ can be in at most $|V(G)|$ many $K_{1,2}$'s), and therefore
	\begin{equation}
	\label{eqn:1112pm24mar21}
	\begin{aligned}
	\constant k_n &\leq \mathbb{E}_{\sss G}(T) \leq  N(K_2,G) |V(G)|\,p_n+N(K_2,G)\,np_n^2+N(K_3,G) +\tfrac16\lambda^3\\
	&\leq \left(\lambda+ n^{-1}\lambda^2\right)N(K_2,G) + N(K_3,G)+\tfrac16 \lambda^3.
	\end{aligned}
	\end{equation}
Now assume by contradiction that $e_{\sss G}=N(K_2,G) \leq \tfrac12(6\constant(1-w_n)k_n)^{2/3}$. Then Lemma \ref{lem:1109pm24mar21} gives $N(K_3,G) \leq \constant (1-w_n)k_n$, and the right-hand side of \eqref{eqn:1112pm24mar21} is bounded above by $\constant (1-w_n)k_n+ \tfrac12(6\constant(1-w_n)k_n)^{2/3}= \constant k_n -r_n$, where $\lim_{n\to\infty} r_n = \infty$ by our assumption that $\lim_{n\to\infty} w_nk_n^{1/3} = \infty$, which contradicts \eqref{eqn:1112pm24mar21}. Therefore $e_{\sss G}=N(K_2,G) \geq \tfrac12(6\constant(1-w_n)k_n)^{2/3}$, which together with $\Phi_{n,p,k}(\constant) \geq e_{\sss G} \log\left(1/p\right)$ completes the proof of the lower bound.
\qed


\section{Structure of random graphs with many triangles}
\label{S6}

In this section, we investigate the structure of $G_{n,p_n}$ {\em conditionally} on $T\geq \constant k_n$ as stated in Theorem \ref{thm-structure-graph-many-triangles}. In Section \ref{sec-near-clique-many-triangles}, we show that, conditionally on $T\geq \constant k_n$, there is a near-clique (recall Definition \ref{def-near-clique}). In Section \ref{sec-local-structure-many-triangles}, we study the local structure of the Erd\H{o}s-R\'enyi random graph conditionally on $T\geq \constant k_n$, and show that its local limit exists and is the same as that for the original Erd\H{o}s-R\'enyi random graph.


\subsection{Localization of the triangles: Existence of near-cliques}
\label{sec-near-clique-many-triangles}

The following theorem states a more precise version of \eqref{existence-near-clique} in Theorem \ref{thm-structure-graph-many-triangles}, where we recall the notion of a $(\delta,m)$-clique in Definition \ref{def-near-clique}:

\begin{thm}{\bf [Existence of a near-clique]}
\label{thm:1000am24jun21}
Under the conditions of Theorem~\ref{thm:607pm22mar21}, for every $\delta>0$, there exists a constant $C>0$ such that, for $n$ large enough,
	\begin{equation}
	\mathbb{P}\left(G_{n,p_n} \text{\rm ~contains no } \left(C(\psin+w_n),m_n\right)\text{\rm-clique} \mid T\geq \constant k_n\right) 
	\leq\delta, 
	\end{equation}
where $m_n:= \tfrac12[6(\constant-2w_n)k_n(1-Ck_n^{-1/3})]^{2/3}$ and $\psi_n$ is defined in \eqref{psindefextra}.
\end{thm}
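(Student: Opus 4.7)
The plan is to refine the core-based upper-bound analysis of Section~\ref{S4.2} by combining it with a stability form of Lemma~\ref{lem:1109pm24mar21}. The conceptual point is that the minimum edge count needed for $\mathbb{E}_G(T)\geq\constant k_n$ is $\tfrac{1}{2}(6\constant k_n)^{2/3}$, and since Lemma~\ref{lem:1109pm24mar21} ($N(K_3,G)\leq(2e_G)^{3/2}/6$) is essentially tight only for (near-)cliques on $(6\constant k_n)^{1/3}$ vertices, any core $G^*$ with roughly this edge count must itself be nearly a clique.

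First, by Lemmas~\ref{lem:117pm15mar21} and~\ref{lem:317pm22mar21}, conditionally on $T\geq\constant k_n$ the random graph $G_{n,p_n}$ contains a core $G^*$ with probability $1-o(1)$ (the error being $\xi_n$ as in~\eqref{eqn:242pm04oct21}), so it suffices to show that $G^*$ contains a $(\delta_n,m_n)$-clique with $\delta_n:=C(\psin+w_n)$. From Definition~\ref{def-core}(C1) and a routine estimate of the non-$N(K_3,G^*)$ contributions to $\mathbb{E}_{G^*}(T)$ (which are bounded using $|V(G^*)|\leq 2e_{G^*}/t_n$ from the proof of Lemma~\ref{lem:307pm22mar21} and $p_n=\lambda/n$), one derives
$$N(K_3,G^*)\geq(\constant-2w_n)k_n\bigl(1-Ck_n^{-1/3}\bigr).$$

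The heart of the argument is the stability of Lemma~\ref{lem:1109pm24mar21}. Let $v\in V(G^*)$ attain minimum degree $d$ and write $m=e_{G^*}$. Partitioning triangles by whether they use $v$ and applying Lemma~\ref{lem:1109pm24mar21} to $G^*\setminus\{v\}$, followed by a Taylor expansion of $(1-d/m)^{3/2}$, gives
$$N(K_3,G^*)\leq\binom{d}{2}+\frac{(2(m-d))^{3/2}}{6}\leq\frac{(2m)^{3/2}}{6}-\frac{d\bigl((2m)^{1/2}-d\bigr)}{2}+O(d^2/\sqrt{m}).$$
If $G^*$ fails to be a $(\delta_n,m)$-clique then $d\leq(1-4\delta_n^{1/2})(2m)^{1/2}$, so $(2m)^{1/2}-d\geq 4\delta_n^{1/2}(2m)^{1/2}$; combining with the previous paragraph forces the edge margin $m-m_n\gtrsim\delta_n^{1/2}d$. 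Iterating the peeling of the minimum-degree vertex eventually produces a subgraph $H\subseteq G^*$ whose minimum degree exceeds $(1-4\delta_n^{1/2})(2e_H)^{1/2}$, i.e., a $(\delta_n,e_H)$-clique, with $e_H$ differing from $m_n$ by at most $o(m_n)$; a minor edge adjustment brings the count exactly to $m_n$.

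To conclude, I would refine the counting in Lemma~\ref{lem:307pm22mar21} to cores with $m\geq m_n(1+c\delta_n^{1/2})$ (the only ones not containing a $(\delta_n,m_n)$-clique as a subgraph), sum the resulting geometric series in $m$, divide by the lower bound from Theorem~\ref{thm:437pm28apr21}, and invoke the variational asymptotics from Theorem~\ref{thm:944pm28apr21} to obtain
$$\mathbb{P}\bigl(G_{n,p_n}\text{ has no }(\delta_n,m_n)\text{-clique}\bigm|T\geq\constant k_n\bigr)\leq\xi_n+p_n^{cm_n\delta_n^{1/2}(1+o(1))}.$$
This is $\leq\delta$ for $n$ large once $m_n\delta_n^{1/2}\log(1/p_n)\to\infty$, a condition that follows from $w_nk_n\geq\binom{n}{3}p_n^C$ together with the growth assumptions on $k_n$ and $\psin$. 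The main obstacle is the stability extraction: converting the Taylor-expanded Cauchy-Schwarz deficit into a clean quantitative edge-count margin, and extracting a near-clique subgraph with \emph{precisely} $m_n$ edges, requires a careful iterative peeling and book-keeping of constants throughout.
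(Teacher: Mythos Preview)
Your overall architecture---show that conditionally on $T\geq\constant k_n$ there is whp a core with edge count close to $m_n$, and then deduce that any such core must contain a high-minimum-degree subgraph---is the same as the paper's. The difference lies in the stability step. The paper does not prove stability of Lemma~\ref{lem:1109pm24mar21} from scratch; instead it invokes a known result (Proposition~\ref{keevash_stab}, due to Keevash and reproved in Harel--Mousset--Samotij) which asserts directly that $|\mathrm{Emb}(K_3,G)|\geq(1-\delta)(2e_G)^{3/2}$ forces a subgraph of minimum degree $(1-4\delta^{1/2})(2e_G)^{1/2}$. The paper then simply verifies this hypothesis for a $\psin$-minimal core (Proposition~\ref{thm:804pm23jun21}), and the counting of cores with too many edges is packaged as Proposition~\ref{thm:259pm25mar21}, exactly parallel to your final paragraph.

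Your vertex-peeling argument is essentially an attempt to reprove Proposition~\ref{keevash_stab}, and it is the right idea: removing a vertex of degree $d$ loses at most $\binom{d}{2}$ triangles but $d$ edges, so when $d$ is well below $(2m)^{1/2}$ the saturation ratio $6N(K_3,H)/(2e_H)^{3/2}$ does not decrease. However, your writeup only does one step; to conclude that the terminal subgraph $H$ still has $e_H=m_n(1+o(1))$ you must track the cumulative edge loss through the iteration and show it is $O(\delta_n^{1/2}m_n)$, which requires keeping the saturation parameter under control at every stage. You correctly flag this as the main obstacle but do not resolve it. The paper sidesteps the issue entirely by citing the external result, which makes its proof of Theorem~\ref{thm:1000am24jun21} essentially a two-line combination of Propositions~\ref{thm:259pm25mar21} and~\ref{thm:804pm23jun21}. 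If you want to keep your approach self-contained, the cleanest fix is to state and prove the peeling as a stand-alone lemma equivalent to Proposition~\ref{keevash_stab}, rather than interleaving it with the core-counting.
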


\noindent
The proof will follow from Propositions \ref{thm:259pm25mar21} and \ref{thm:804pm23jun21} below. 

We start by setting the stage. We will work with $\psin$-minimal cores, which are cores that do not have too many edges (recall also Definition \ref{def-core} for the definition of a core).
	
\begin{dfn}{\bf [$\psin$-minimal cores]}
For a sequence $(\psi_n)_{n\in\N}$ with $\lim_{n\to\infty} \psi_n = 0$, $G$ is called a $\psi_n$-minimal core if $G$ is a core with $e_{\sss G} \leq \tfrac12 (6\constant(1+\psi_n)k_n)^{2/3}$. 
\end{dfn}

\noindent
Note that the upper bound in $e_{\sss G} \leq \tfrac12 (6\constant(1+\psi_n)k_n)^{2/3}$ is chosen in relation to the event $T\geq \constant k_n$ of interest. The following lemma reveals the structure of the graph when it has many triangles:

\begin{prop}{\bf [Unlikely not to have a $\psi_n$-minimal core]}
\label{thm:259pm25mar21}
$\mbox{}$\\
For every $\constant,\delta>0$ and every sequence $(w_n)_{n\in\N}$ in Definition \ref{def-core} of the core satisfying $\lim_{n\to\infty}\frac{\log{n}}{w_n^2k_n^{1/3}}$ $=0$ and $\lim_{n\to\infty} (\log w_n)/\log(1/p_n)=0$,
	\begin{equation}
	\mathbb{P}\left(T\geq \constant k_n,\, G_{n,p_n} \text{\rm ~contains no } 2\psin\text{\em -minimal core}\right) 
	\leq \delta\,\mathbb{P}(T\geq \constant k_n).
	\end{equation}
\end{prop}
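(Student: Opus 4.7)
The plan is to decompose the event $\{T\geq \constant k_n,\,G_{n,p_n}\text{ contains no }2\psin\text{-minimal core}\}$ into two pieces and bound each by $o(\mathbb{P}(T\geq \constant k_n))$, which then gives the claim after absorbing the multiplicative constant into $\delta$ for $n$ large.

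First, I would dispose of the possibility that no seed is present at all. By \eqref{eqn:242pm04oct21} in Lemma \ref{lem:117pm15mar21}, with the constant $C$ in Definition \ref{def-seed} chosen large enough that $\xi_n$ in \eqref{eqn:xin} tends to $0$, we obtain
\begin{equation*}
\mathbb{P}(T\geq \constant k_n,\,G_{n,p_n}\text{ contains no seed}) \leq \xi_n\,\mathbb{P}(T\geq \constant k_n) = o\bigl(\mathbb{P}(T\geq \constant k_n)\bigr).
\end{equation*}
By Lemma \ref{lem:317pm22mar21}, every seed contains a core. What remains is therefore the event that $G_{n,p_n}$ contains a core, but none with at most $\tfrac12(6\constant(1+2\psin)k_n)^{2/3}$ edges; equivalently, $G_{n,p_n}$ contains an $m$-core for some $m \geq m_n^\ast := \lceil \tfrac12(6\constant(1+2\psin)k_n)^{2/3}\rceil$.

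Second, I would bound the latter probability by combining Lemma \ref{lem:307pm22mar21} (which gives at most $(1/p_n)^{m\psin}$ many $m$-cores) with a union bound and a geometric-series estimate to obtain
\begin{equation*}
\mathbb{P}\bigl(G_{n,p_n}\text{ contains an }m\text{-core with }m\geq m_n^\ast\bigr) \leq \sum_{m\geq m_n^\ast} p_n^{m(1-\psin)} \leq 2\,p_n^{m_n^\ast(1-\psin)}
\end{equation*}
for $n$ large. To turn this into the desired comparison, I would invoke the clique-based lower bound (or equivalently Theorem \ref{thm:437pm28apr21} combined with the upper bound in Theorem \ref{thm:944pm28apr21}) to get
\begin{equation*}
\log \mathbb{P}(T\geq \constant k_n) \geq -\tfrac12(6\constant k_n)^{2/3}(1+o(1))\log(1/p_n),
\end{equation*}
the $o(1)$ collecting the $\varepsilon_n = k_n^{-2/3}$ and the perturbation sequence used in those theorems.

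Third, I would expand
\begin{equation*}
m_n^\ast(1-\psin) = \tfrac12(6\constant k_n)^{2/3}(1+2\psin)^{2/3}(1-\psin) = \tfrac12(6\constant k_n)^{2/3}\bigl(1+\tfrac13\psin+O(\psin^{2})\bigr),
\end{equation*}
so that the ratio of the core bound to $\mathbb{P}(T\geq \constant k_n)$ is at most
\begin{equation*}
2\,p_n^{\tfrac12(6\constant k_n)^{2/3}(\tfrac13\psin - o(1))}.
\end{equation*}
Under the hypothesis $\log n/(w_n^{2}k_n^{1/3}) \to 0$, the first summand of $\psin$ in \eqref{eqn:242pm25mar21} guarantees $\psin k_n^{2/3}\log(1/p_n)\to \infty$, so this ratio tends to $0$. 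Combining the two steps and taking $n$ large yields the bound $\delta\,\mathbb{P}(T\geq \constant k_n)$ required by the proposition.

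The main obstacle is delicate: the perturbation $\tfrac13\psin$ obtained from the gap between $m_n^\ast$ and the baseline $\tfrac12(6\constant k_n)^{2/3}$ must genuinely dominate the $o(1)$ correction in the lower bound on $\mathbb{P}(T\geq \constant k_n)$, which involves both $\varepsilon_n = k_n^{-2/3}$ and the variational perturbation. The explicit form of $\psin$ in \eqref{eqn:242pm25mar21} is engineered precisely so that this dominance holds under the two hypotheses $\log n/(w_n^{2}k_n^{1/3})\to 0$ and $(\log w_n)/\log(1/p_n)\to 0$; the second hypothesis is needed to control the second summand of $\psin$, which enters the constant factor absorbed in $o(1)$.
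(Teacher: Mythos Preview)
Your proposal is correct and follows essentially the same route as the paper: the same two-term decomposition (no seed/core versus a core that is too large), the same appeal to Lemmas \ref{lem:117pm15mar21}, \ref{lem:317pm22mar21}, and \ref{lem:307pm22mar21}, and the same geometric-series bound on Term-II. The only notable difference is in the final comparison: the paper compares Term-II directly to the crude clique lower bound $p_n^{\frac12(6\constant k_n)^{2/3}}\lesssim \mathbb{P}(T\geq \constant k_n)$ and uses the explicit elementary inequality $(1+2x)^{2/3}(1-x)\geq 1+0.2x$, whereas you Taylor-expand to extract the gain $\tfrac13\psin$ and invoke Theorems \ref{thm:437pm28apr21} and \ref{thm:944pm28apr21} for the lower bound, carrying an explicit $o(1)$ that must be dominated. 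Your version is more careful about this last step---the paper's display \eqref{eqn:931pm25mar21} is somewhat informal about the competing error terms---but both arguments ultimately rest on the fact that $\psin k_n^{1/3}\to\infty$ under the stated hypotheses, which is exactly what makes the gain beat the rounding/perturbation losses.
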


\begin{proof}
For any sequence of positive reals $\psi_n'$, we can split
	\begin{equation}
	\begin{aligned}
	&\mathbb{P}\left(T\geq \constant k_n,\,G_{n,p_n} \text{ contains no } \psi_n'\text{-minimal core}\right)\\ 
	&\leq \mathbb{P}\left(T\geq \constant k_n, G_{n,p_n} \text{ contains no core}\right) 
	+ \mathbb{P}\left(G_{n,p_n} \text{ has a core that is not } \psi_n'\text{-minimal}\right)\\
	&= \text{Term-I} + \text{Term-II}.
	\end{aligned}
	\end{equation}
We bound both contributions. For \textbf{Term-I}, we start by recalling that \eqref{eqn:242pm04oct21} in Lemma \ref{lem:117pm15mar21} implies
	\begin{equation}
	\mathbb{P}\left(T\geq \constant k_n,\, G_{n,p_n} \text{ contains no seed}\right) 
	\leq\xi_n\,\mathbb{P}(T\geq \constant k_n),
	\end{equation}
where 
	\begin{equation}
	\xi_n:= \exp\left([\tfrac12 (6(\constant -w_n))^{2/3}-\tfrac13C]\,k_n^{2/3}\log(1/p_n)\right).
	\end{equation} 
By Lemma \ref{lem:317pm22mar21},
	\begin{equation}
	\label{eqn:210pm25mar21}
	\mathbb{P}\left(T\geq \constant k_n,\,G_{n,p_n} \text{\rm contains no core}\right) 
	\leq \xi_n\,\mathbb{P}(T\geq \constant k_n).
	\end{equation}
Hence we can use \eqref{eqn:210pm25mar21} to get
	\begin{equation}
	\label{eqn:855pm05mar21}
	\text{Term-I} \leq \tfrac12 \delta\,\mathbb{P}(T\geq \constant k_n).
	\end{equation}
This bounds \textbf{Term-I}. For \textbf{Term-II}, using the union bound, we have
	\begin{equation}
	\text{Term-II} \leq \sum_{m \geq \frac{1}{2}(6\constant(1+\psi_n')k_n)^{2/3}} p_n^{m} 
	\,|\,\{G^*\subseteq K_n\colon\, G^*\text{ is an }m\text{-core}\}\,|.
	\end{equation}
We take $\psi_n'=2\psin$, recall \eqref{eqn:242pm25mar21}, and note that, under the assumptions in Proposition \ref{thm:259pm25mar21}, $\lim_{n\to\infty} \psin $ $=0$. By Lemma~\ref{lem:307pm22mar21} (see, in particular, \eqref{eqn:327pm22mar21} in its proof),
	\begin{equation}
	\label{eqn:254pm25mar21}
	\begin{aligned}	
	\text{Term-II} \leq \sum_{m \geq \tfrac12(6\constant(1+2\psin)k_n)^{2/3}} p_n^{m(1-\psin)} 
	\leq (1+\varepsilon)\,p_n^{\tfrac12(6\constant(1+2\psin)k_n)^{2/3}(1-\psin)}. 
	\end{aligned}
	\end{equation}
Note that $(1+2x)^{2/3}(1-x)=((1+2x)(1-x))^{2/3}(1-x)^{1/3}$, which gives $(1+2x)^{2/3}(1-x) \geq (1+0.9x)^{2/3}(1-x)^{1/3} \geq (1+0.7x)^{1/3} \geq 1+0.2x$ for $x$ sufficiently small. Since $\lim_{n\to\infty} \psin = 0$, using this in \eqref{eqn:254pm25mar21}, we get
	\begin{equation}
	\label{eqn:931pm25mar21}
	\text{Term-II} \leq  p_n^{\tfrac12(6\constant k_n)^{2/3}+c\psin} 
	\leq \tfrac12 \varepsilon\, p_n^{\tfrac12 (6\constant k_n)^{2/3}} \leq \tfrac12\delta\,\mathbb{P}(T\geq \constant k_n).
	\end{equation}
In the last equation we use that $\psin\geq \frac{\log{C'(\log(1/p_n))}}{\log(1/p_n)}$, which gives $ p_n^{\psin} \leq \exp{(-\log{C'(\log(1/p_n))})}$, and so $\lim_{n\rightarrow \infty}  p_n^{\psin} =0$. This bounds \textbf{Term-II}.
	
Combining \eqref{eqn:855pm05mar21} and \eqref{eqn:931pm25mar21},  we arrive at the claim.
\end{proof}

We continue by studying subgraphs of $\psin$-minimal cores. Proposition \ref{thm:804pm23jun21} below proves that a subgraph exists with a high minimal degree. Let $\mathrm{Emb}(H,G)$ denote the set of embeddings of a graph $H$ in $G$, i.e., the set of edge-preserving injective maps from $V(H)$ to $V(G)$. We use the following theorem from \cite{keevash2008shadows} (also proved in \cite{harel2019upper}):

\begin{prop}{\bf [Existence of subgraph with high minimal-degree under embeddings bound]}
\label{keevash_stab}
If a graph $G$ satisfies
	\begin{equation}
	|\mathrm{Emb}(K_3,G)|\geq (1-\delta)(2e_{\sss G})^{3/2}
	\end{equation}
for some $\delta\geq (e_{\sss G})^{-1/2}$, then $G$ has a subgraph $G'$ with minimum degree $(1-4\delta^{1/2})(2e_{\sss G})^{1/2}$.
\end{prop}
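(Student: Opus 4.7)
The plan is to prove this by a greedy vertex-removal argument, which is the standard approach for stability versions of the Kruskal--Katona theorem. Set the target threshold $t := (1-4\delta^{1/2})(2e_{\sss G})^{1/2}$ and iteratively strip off vertices of low degree: form a chain of subgraphs $G = H_0 \supseteq H_1 \supseteq \cdots \supseteq H_s$ in which $H_{i+1}$ is obtained from $H_i$ by deleting some vertex $v_i$ of $H_i$-degree $d_i < t$. The process terminates at $H_s =: G'$ either when every remaining vertex has degree at least $t$ (in which case we are done), or when $H_s = \emptyset$. The entire task will be to rule out the latter case.

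The bookkeeping is routine: each edge (respectively triangle) of $G$ not in $G'$ is destroyed in exactly the step when the first of its two (respectively three) endpoints is removed, so $\sum_i d_i = e_{\sss G} - e_{\sss G'}$ and $N(K_3,G) - N(K_3,G') = \sum_i t_i$, where $t_i := |E(H_i[N_{H_i}(v_i)])| \leq \binom{d_i}{2}$. Suppose for contradiction that $G' = \emptyset$; then $\sum_i d_i = e_{\sss G}$ and $\sum_i t_i = N(K_3, G)$, with $d_i < t$ at every step.

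The crux is a sufficiently sharp upper bound on $\sum_i t_i$. The cheap inequality $\sum_i \binom{d_i}{2} \leq \tfrac12 t \sum_i d_i = \tfrac12 t\,e_{\sss G}$ only yields $6 N(K_3,G) \leq 3 t\, e_{\sss G}$, which combined with the hypothesis $6 N(K_3,G) \geq (1-\delta)(2e_{\sss G})^{3/2}$ forces $t \geq \tfrac23(1-\delta)(2e_{\sss G})^{1/2}$: off by a constant factor from the claimed threshold. The desired factor $(1-4\delta^{1/2})$ in place of $\tfrac23$ is exactly the content of the stability form of Kruskal--Katona due to Keevash: when a graph nearly saturates the bound $6 N(K_3,G) \leq (2 e_{\sss G})^{3/2}$, its degree sequence must be concentrated near $(2 e_{\sss G})^{1/2}$ and a linear fraction of its edges must lie in a near-clique. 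Applying this refinement stepwise replaces the wasteful bound $t_i \leq d_i^2/2$ by one of order $(1+O(\delta^{1/2}))\,d_i(2e_{\sss G})^{1/2}/2$ for most $i$, and the resulting sum contradicts the original triangle count precisely when $t < (1-4\delta^{1/2})(2e_{\sss G})^{1/2}$.

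The main obstacle will be to make this refined loss estimate precise. Two routes are available: (i) a weighted, iterated Kruskal--Katona bound in which the triangles through $v_i$ are controlled by successive differences $\binom{x_i}{3} - \binom{x_i-1}{3}$, with $\binom{x_i}{2}$ tracking $|E(H_i[N_{H_i}(v_i)])|$, after which the total is estimated by summation by parts; or (ii) a compression/shifting reduction to colex-initial families, for which the minimum-degree behaviour is explicit. The hypothesis $\delta \geq e_{\sss G}^{-1/2}$ enters at exactly this stage, as it lets us absorb an $O(\delta^{1/2})$ error accumulated over the $O((2 e_{\sss G})^{1/2})$ removal steps into the claimed threshold. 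The cleanest route is to follow the exposition of Harel, Mousset, and Samotij \cite{harel2019upper}, which is itself a repackaging of Keevash \cite{keevash2008shadows}.
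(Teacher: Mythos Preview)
The paper does not prove this proposition at all: it is quoted verbatim as a black-box result from \cite{keevash2008shadows} (and also \cite{harel2019upper}), with no argument given. So there is no ``paper's own proof'' to compare against.

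Your sketch correctly identifies the mechanism behind the result in those references: greedy removal of low-degree vertices, tracking the loss in edges and triangles at each step, and showing that the naive bound $t_i \leq \binom{d_i}{2}$ is too weak by a constant factor while the Kruskal--Katona stability refinement recovers the correct threshold $(1-4\delta^{1/2})(2e_{\sss G})^{1/2}$. You also correctly flag where the hypothesis $\delta \geq e_{\sss G}^{-1/2}$ is used. However, your write-up is a proof \emph{plan}, not a proof: you explicitly leave the ``main obstacle'' --- the precise refined loss estimate --- unresolved and defer to \cite{harel2019upper} and \cite{keevash2008shadows} for it. Since that estimate is the entire content of the proposition, what you have written is essentially a commentary on the cited proofs rather than an independent argument. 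This is fine if your intent is to match the paper's treatment (which is simply to cite), but if you are being asked to supply a self-contained proof, the refined per-step bound must actually be carried out.
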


\begin{prop}{\bf [Existence of subgraph of $\psin$-minimal core with high minimal-degree]}
\label{thm:804pm23jun21}
A $\psin$-minimal core $G$ contains a subgraph $G'$ whose minimum degree is at least $(1-4(C(\psin+w_n))^{1/2})(2e_{\sss G})^{1/2}$.
\end{prop}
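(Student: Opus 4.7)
The plan is to apply Proposition~\ref{keevash_stab} directly to $G$ with $\delta$ chosen of order $\psi_n+w_n$. The conclusion of that proposition then yields a subgraph $G'$ with minimum degree at least $(1-4\delta^{1/2})(2e_{\sss G})^{1/2}$, which is exactly the claim. So the entire work reduces to verifying the hypothesis: showing that
\begin{equation*}
|\mathrm{Emb}(K_3,G)|=6N(K_3,G)\geq \bigl(1-C(\psi_n+w_n)\bigr)(2e_{\sss G})^{3/2}
\end{equation*}
for an absolute constant $C$, and checking the side condition $\delta\geq e_{\sss G}^{-1/2}$.

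For the upper bound on $(2e_{\sss G})^{3/2}$ the $\psi_n$-minimality of the core gives directly $(2e_{\sss G})^{3/2}\leq 6\constant(1+\psi_n)k_n$. For the matching lower bound on $N(K_3,G)$, the idea is to invert the decomposition of $\mathbb{E}_{\sss G}(T)$ already used in Section~\ref{S3}, namely
\begin{equation*}
\mathbb{E}_{\sss G}(T)\leq N(K_3,G)+N(K_{1,2},G)p_n+e_{\sss G}\,np_n^2+\tfrac16\lambda^3,
\end{equation*}
and then to invoke the core property (C1), which gives $\mathbb{E}_{\sss G}(T)\geq(\constant-2w_n)k_n$. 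This yields
\begin{equation*}
N(K_3,G)\geq (\constant-2w_n)k_n-N(K_{1,2},G)\,p_n-O\bigl(k_n^{2/3}/n\bigr)-O(1).
\end{equation*}
The terms $e_{\sss G}\,np_n^2=O(k_n^{2/3}/n)$ and $\lambda^3/6=O(1)$ are trivially negligible compared to $(\psi_n+w_n)k_n$ under the standing assumptions.

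The main obstacle is controlling $N(K_{1,2},G)\,p_n$. Here the plan is to reuse the structural argument from the proof of Lemma~\ref{lem:307pm22mar21}: by the core condition (C3), every edge of $G$ has both endpoints in the set $A_{\sss G}$ of vertices of degree at least $t_n-C(\lambda)$, and the edge count bound gives $|A_{\sss G}|\leq \ellalt=2e_{\sss G}/(t_n-C(\lambda))$. Consequently $|V(G)|\leq\ellalt$, and using $N(K_{1,2},G)=\sum_v\binom{d_v}{2}\leq e_{\sss G}\cdot \max_v d_v \leq e_{\sss G}|V(G)|$ together with the $\psi_n$-minimality bound $e_{\sss G}=O(k_n^{2/3})$, we obtain
\begin{equation*}
N(K_{1,2},G)\,p_n\leq \frac{2\lambda\,e_{\sss G}^2}{n(t_n-C(\lambda))}=O\!\left(\frac{k_n\log n}{n\,w_n^2}\right).
\end{equation*}
Under the assumption $\log n/(w_n^2k_n^{1/3})\to 0$, this bound is $o(k_n^{2/3})$ and in particular is swallowed by $C(\psi_n+w_n)k_n$ since $\psi_n$ itself already contains a term of order $\log n/(w_n^2 k_n^{1/3})$.

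Putting the pieces together,
\begin{equation*}
\frac{6N(K_3,G)}{(2e_{\sss G})^{3/2}}\geq \frac{6(\constant-2w_n)k_n-O\bigl((\psi_n+w_n)k_n\bigr)}{6\constant(1+\psi_n)k_n}\geq 1-C(\psi_n+w_n)
\end{equation*}
for some absolute constant $C$, which is the hypothesis of Proposition~\ref{keevash_stab} with $\delta=C(\psi_n+w_n)$. The side condition $\delta\geq e_{\sss G}^{-1/2}$ follows from the lower bound $e_{\sss G}\geq C'k_n^{2/3}$, which is a consequence of Lemma~\ref{lem:1109pm24mar21} applied to $N(K_3,G)$ (which is at least of order $k_n$ by the argument above), giving $e_{\sss G}^{-1/2}=O(k_n^{-1/3})$; this is dominated by $\psi_n+w_n$ since $\psi_n\gg k_n^{-1/3}$ under our conditions. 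Proposition~\ref{keevash_stab} then produces the required subgraph $G'$ and completes the proof.
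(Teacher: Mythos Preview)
Your proof is correct and follows the same overall strategy as the paper: bound $(2e_{\sss G})^{3/2}$ from above via $\psi_n$-minimality, bound $N(K_3,G)$ from below by inverting the decomposition of $\mathbb{E}_{\sss G}(T)$ and using (C1), then apply Proposition~\ref{keevash_stab}. The verification of the side condition $\delta\geq e_{\sss G}^{-1/2}$ via Lemma~\ref{lem:1109pm24mar21} and $k_n^{1/3}\psi_n\to\infty$ is also the same.

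The one place where you take a longer route is the control of $N(K_{1,2},G)\,p_n$. You invoke the core condition (C3) and the argument of Lemma~\ref{lem:307pm22mar21} to bound $|V(G)|\leq \ellalt=2e_{\sss G}/(t_n-C(\lambda))$, obtaining $N(K_{1,2},G)\,p_n=O(k_n\log n/(nw_n^2))$, and then argue this is absorbed by $(\psi_n+w_n)k_n$. The paper simply uses the trivial bound $|V(G)|\leq n$, so that $N(K_{1,2},G)\,p_n\leq e_{\sss G}\,|V(G)|\,p_n\leq \lambda e_{\sss G}=O(k_n^{2/3})$, and then absorbs this $O(k_n^{2/3})$ term using $k_n^{1/3}\psi_n\to\infty$. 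Your detour through the core structure is valid but unnecessary; the cancellation $|V(G)|\,p_n\leq n\cdot(\lambda/n)=\lambda$ already does the job with no appeal to (C3).
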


\begin{proof}
A $\psin$-minimal core has at most $\tfrac12(6\constant(1+\psin)k_n)^{2/3}$ edges and, by Definition \ref{def-core} of a core, also $\mathbb{E}_{\sss G}(T) \geq (\constant-2w_n)k_n$. Therefore
	\begin{equation}
	\begin{aligned}
	(\constant-2w_n)k_n \leq \mathbb{E}_{\sss G}(T) 
	&\leq N(K_{1,2},G)\,p_n+N(K_2,G)\,np_n^2+N(K_3,G)+ \tfrac16\lambda^3\\
	&\leq \tfrac12 \lambda(6\constant(1+\psin)k_n)^{2/3}
	+\tfrac12n^{-1}\lambda^2(6\constant(1+\psin)k_n)^{2/3}+N(K_3,G)+ \tfrac16 \lambda^3.
	\end{aligned}
	\end{equation}
In the last display we have used that $N(K_{1,2},G) \leq N(K_2,G) |V(G)|$, since each edge in $G$ can be in at most $|V(G)|$ many $K_{1,2}$'s. Now bounding $\psin$ by $1$, we get
	\begin{equation}
	\label{eqn:705pm23jun21}
	N(K_3,G) \geq (\constant-2w_n)k_n- \left(\tfrac12 \lambda+\tfrac12 n^{-1}\lambda^2\right)(12\constant k_n)^{2/3} - \tfrac16 \lambda^3.
	\end{equation}
Write $e_{\sss G}^{\max}:= \tfrac12(6\constant(1+\psin)k_n)^{2/3}$ (which is the maximum number of edges a $\psin$-minimal core can have). Also note that $|\mathrm{Emb}(K_3,G)| = 6N(K_3,G)$. Therefore \eqref{eqn:705pm23jun21} gives
	\begin{equation}
	\label{eqn:757pm23jun21}
	\begin{aligned}
	|\mathrm{Emb}(K_3,G)|&\geq (2e_{\sss G}^{\max})^{3/2}\left(1-C(\psin+w_n)-Ck_n^{-1/3}\right)\\
	&\geq (2e_{\sss G}^{\max})^{3/2}\left(1-C(\psin+w_n)\right)\\
	&\geq (2e_{\sss G})^{3/2}\left(1-C(\psin+w_n)\right),
	\end{aligned}
	\end{equation}
where in the second line we enlarge the constant $C$ and use the fact that $\lim_{n\to\infty} k_n^{1/3}\psin=\infty$. Now, \eqref{eqn:705pm23jun21} and Lemma \ref{lem:1109pm24mar21} give that the core $G$ must have at least $e_{\sss G}^{\min}:= \tfrac12[6(\constant-2w_n)k_n(1-Ck_n^{-1/3})]^{2/3}$ many edges. Clearly, $C(\psin+w_n)\geq (e_{\sss G}^{\min})^{-1/2} \geq (e_{\sss G})^{-1/2}$. Therefore, using \eqref{eqn:757pm23jun21}, we get from Proposition \ref{keevash_stab} that $G$ contains a subgaph $G'$ with minimum degree $(1-4(C(\psin+w_n))^{1/2})(2e_{\sss G})^{1/2}$.
\end{proof}


\subsection{The local limit of the random graph conditionally on many triangles}
\label{sec-local-structure-many-triangles}

In this section, we show that even if we condition the graph to have at least $\constant k_n$ triangles with $k_n$ not too large, then the local structure of the graph remains unaffected. To describe the result, we introduce some notation and define local convergence. We refer to \cite{aldous2007processes} or \cite[Chapter 2]{Hofs21} for more details. 

A \emph{rooted graph} $(G,o)$ is a locally-finite (i.e., the degrees of all the vertices are finite) and connected graph $G=(V,E)$ with a distinguished vertex $o\in V$, called the root. For an integer $r\geq 0$, let $(G,o)_r$ be the induced subgraph on the vertex set $\{u\in V\colon \dist_{\sss G}(o,u)\leq r\}$, where $\dist_{\sss G}$ is the usual graph distance in $G$. 

Two rooted graphs $(G_i,o_i)=(V_i,E_i,o_i)$, $i=1,2$, are {\em isomorphic} if there exists a bijection $\sigma\colon\,V_1\rightarrow V_2$ such that $\sigma(o_1)=o_2$, and $\{u,v\}\in E_1$ if and only if $\{\sigma(u),\sigma(v)\}\in E_2$. It is easy to check that the rooted isomorphism is an equivalence relation, which we denote by $(G_1,o_1)\simeq (G_2,o_2)$. An equivalence class of rooted graphs is sometimes just called an unlabelled rooted graph. 

Let $\mathscr{G}^\star$ be the set of all locally finite, connected and unlabelled rooted graphs. For $\gamma \in \Gstar$ and an integer $h \geq 0$, let $\gamma_h$ denote the unlabelled rooted graph obtained by removing all the vertices, and all the edges incident to them, that are at graph-distance larger than $h$ from the root. 

The {\em local topology} is the smallest topology such that for any $\gamma \in \Gstar$ the $\Gstar \to \{0,1\}$ function $f(g)= \mathbf{1}(g_h=\gamma_h)$ is continuous. This topology is metrizable, and with this topology the space $\Gstar$ is complete and separable \cite{aldous2007processes}. Write $\mathcal{P}(\Gstar)$ to denote the space of probability measures on $\Gstar$, and equip $\mathcal{P}(\Gstar)$ with the topology of weak convergence.

 Fix a finite graph $G=(V,E)$. The empirical neighborhood distribution of $U(G) \in \mathcal{P}(\Gstar)$ is defined by
	\begin{equation}
	\label{eqn:915am28sep}
	U(G):=\frac{1}{|V|} \sum_{v\in V} \delta_{[G,v]},
	\end{equation}
where $[G,v] \in \Gstar$ denotes the equivalence class of $(G(v),v)$ and $\delta_g$ is the Dirac mass at $g \in \Gstar$. For a sequence of finite graphs $\{G_n\}_{n \in \mathbb{N}}$, we say that $G_n$ converges to $\rho \in \mathcal{P}(\Gstar)$ in the \emph{local weak sense} if $U(G_n)$ converges to $\rho$ in $\mathcal{P}(\Gstar)$, i.e., if for all bounded and continuous functions $h\colon \Gstar \to \mathbb{R}$,
	\eqn{
	\mathbb{E}\Big[\frac{1}{|V|} \sum_{v\in V} h([G,v])\Big]\rightarrow \mathbb{E}_\rho[h(G,o)],
	}
where the expectation is w.r.t.\ the (possibly random) graph $G_n$. We further say that $G_n$ converges {\em locally in probability} to $\rho \in \mathcal{P}(\Gstar)$ if for all bounded and continuous functions $h\colon \Gstar \to \mathbb{R}$,
	\eqn{
	\frac{1}{|V|} \sum_{v\in V} h([G,v])\stackrel{\mathbb{P}}{\rightarrow} \mathbb{E}_\rho[h(G,o)].
	}
Now consider $\text{UGW}(\text{Poi}(\lambda)) \in \mathcal{P}(\Gstar)$, the Galton-Watson tree where the root has $\text{Poi}(\lambda)$ many children, and all the children have $\text{Poi}(\lambda)$ many children independently, and so on. Our main result on local convergence is the following theorem:

\begin{thm}{\bf [Local convergence conditionally on many triangles]}
\label{thm:428pm11oct21}
 Let $(k_n)_{n\in \N}$ be such that $\lim_{n \to \infty} k_n^{2/3} \log (1/p_n)/n=0$. Then, under the conditions of Theorem \ref{thm:437pm28apr21}, conditionally on $T\geq \constant k_n$, $G_{n,p_n}$ converges locally in probability to $\text{\rm UGW}(\text{\rm Poi}(\lambda))$.
\end{thm}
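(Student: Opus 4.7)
The plan is to combine the large deviation lower bound on $\mathbb{P}(T\geq \constant k_n)$ from Theorem~\ref{thm:437pm28apr21} with the large deviation principle of Bordenave and Caputo \cite[Theorem~1.8]{bordenave2015large}. The latter states that the empirical neighborhood distribution $U(G_{n,p_n})\in\mathcal{P}(\mathscr{G}^\star)$ satisfies an LDP at speed $n$ with a good rate function $I$ whose \emph{unique} zero is $\text{UGW}(\text{Poi}(\lambda))$. The guiding heuristic is that a deviation of the local profile from $\text{UGW}(\text{Poi}(\lambda))$ costs $\mathrm{e}^{-\Theta(n)}$, whereas $\{T\geq \constant k_n\}$ has probability at least $\mathrm{e}^{-Ck_n^{2/3}\log(1/p_n)}$; under the hypothesis $k_n^{2/3}\log p_n/n\to 0$, conditioning on the latter cannot force the former.

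Concretely, I would first reduce the conclusion to showing that, for every weak open neighborhood $\mathcal{N}\subseteq \mathcal{P}(\mathscr{G}^\star)$ of $\text{UGW}(\text{Poi}(\lambda))$,
\begin{equation}
\mathbb{P}\left(U(G_{n,p_n})\notin \mathcal{N}\,\bigl|\,T\geq \constant k_n\right)\longrightarrow 0,
\end{equation}
which is equivalent to local convergence in probability since the candidate limit is deterministic. Writing
\begin{equation}
\mathbb{P}\left(U(G_{n,p_n})\notin \mathcal{N}\,\bigl|\,T\geq \constant k_n\right)\;\leq\;\frac{\mathbb{P}(U(G_{n,p_n})\notin \mathcal{N})}{\mathbb{P}(T\geq \constant k_n)},
\end{equation}
I would bound the two factors separately. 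The LDP upper bound applied to the closed set $\mathcal{N}^c$ gives
\begin{equation}
\limsup_{n\to\infty}\frac{1}{n}\log \mathbb{P}(U(G_{n,p_n})\notin \mathcal{N})\;\leq\;-\inf_{\rho\in \mathcal{N}^c}I(\rho)\;=:\;-c_\mathcal{N},
\end{equation}
and $c_\mathcal{N}>0$ follows from compactness of the sublevel sets of the good rate function $I$ together with the uniqueness of its zero. For the denominator, Theorems~\ref{thm:437pm28apr21} and~\ref{thm:944pm28apr21} combine to yield $\log\mathbb{P}(T\geq \constant k_n)\geq -Ck_n^{2/3}\log(1/p_n)$ for some $C=C(\constant,\lambda)$, and consequently
\begin{equation}
\log\mathbb{P}\left(U(G_{n,p_n})\notin \mathcal{N}\,\bigl|\,T\geq \constant k_n\right)\;\leq\;-\tfrac{1}{2}c_\mathcal{N}\,n\,+\,Ck_n^{2/3}\log(1/p_n),
\end{equation}
which tends to $-\infty$ under the hypothesis $k_n^{2/3}\log p_n/n\to 0$ (recall $\log(1/p_n)\sim \log n$ since $p_n=\lambda/n$).

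The main obstacle is justifying that $\text{UGW}(\text{Poi}(\lambda))$ is the unique zero of the Bordenave--Caputo rate function $I$, so that $\inf_{\rho\in\mathcal{N}^c}I(\rho)>0$. This follows from \cite{bordenave2015large} in two ways: directly, from their entropic formula for $I$; or indirectly from the fact that the unconditional $G_{n,p_n}$ already converges locally in probability to $\text{UGW}(\text{Poi}(\lambda))$, which combined with the LDP upper bound forces $\text{UGW}(\text{Poi}(\lambda))$ to be the sole minimizer. Modulo this input, the argument is a clean ``rare-event-dominated-by-a-larger-rare-event'' computation; in particular it requires no structural information on the conditioned graph beyond the quantitative lower bound in Theorem~\ref{thm:437pm28apr21}, and explains why the (necessarily small) near-clique produced by Theorem~\ref{thm-structure-graph-many-triangles} is too microscopic to affect the local limit.
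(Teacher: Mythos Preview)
Your approach is correct and is essentially the paper's own proof: bound the conditional probability by the ratio $\mathbb{P}(U(G_{n,p_n})\notin\mathcal{N})/\mathbb{P}(T\geq \constant k_n)$, control the numerator via the Bordenave--Caputo LDP upper bound and the denominator via Theorems~\ref{thm:437pm28apr21} and~\ref{thm:944pm28apr21}, and compare rates. The paper carries out the step $\inf_{\mathcal{N}^c}I>0$ explicitly (Proposition~\ref{lem:433pm12sep21}) by a case split on whether the mean degree equals $\lambda$, invoking \cite[Remark~4.12]{backhausz2021typicality} for the uniqueness of the entropy maximizer; your abstract argument via compact sublevel sets plus uniqueness of the zero is an equivalent and slightly cleaner packaging.

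One small caveat: your ``indirect'' route to the uniqueness of the zero of $I$---convergence in probability of $U(G_{n,p_n})$ combined with the LDP upper bound---does not actually force uniqueness. Convergence in probability only gives $\mathbb{P}(U(G_{n,p_n})\in O)\to 0$ for open $O$ bounded away from $\text{UGW}(\text{Poi}(\lambda))$, whereas a second zero $\rho_1$ of $I$ would (via the LDP \emph{lower} bound) only yield $\liminf \tfrac{1}{n}\log\mathbb{P}(U(G_{n,p_n})\in O)\geq 0$, which is compatible with subexponential decay. So you do need the entropic identification of $I$ (your ``direct'' route), as the paper uses.
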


Our proof crucially relies on \cite[Theorem 1.8]{bordenave2015large}. In order to show that the local structure of the random graph does not change under the presence of a large number of triangles, we compute the probability that the random graph is concentrated around the Galton-Watson tree with offspring distribution Poi$(\lambda)$. We rely on the following result from \cite{bordenave2015large}:

\begin{prop}{\bf \cite[Theorem 1.8]{bordenave2015large}}
\label{thm:341pm30jun21}
Fix $\lambda>0$, and let $G_{n,p_n}$ be the random graph on $n$ vertices with $p_n=\lambda/n$. Then $U(G_{n,p_n})$ satisfies the large deviation principle in $\mathcal{P}(\Gstar)$ with rate $n$ and with a good rate function $I$, i.e., $I$ is lower semi-continuous and has compact level sets. Further,
	\begin{equation}
	\label{eqn:330pmjun21}
		I(\rho) = \tfrac12 \lambda -\tfrac12 d\log{\lambda} - \Sigma\left(\rho\right), \qquad \rho\in\mathcal{P}(\Gstar),
	\end{equation}
where $d:= \mathbbm{E}_{\rho}(\text{\rm deg}_{\sss G}(o))$ is the degree of $o$ in $G$, with the convention that $\Sigma\left(\rho\right) =0$ if $d=0$. The quantity $\Sigma\left(\rho\right)$ appearing in the rate function will be defined below.
\end{prop}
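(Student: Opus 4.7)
The plan is to establish the large deviation principle via the standard upper/lower bound strategy, with the rate function $I(\rho) = \tfrac{1}{2}\lambda - \tfrac{1}{2}d\log \lambda - \Sigma(\rho)$ splitting into a Bernoulli edge-weight cost (the first two terms) and an intrinsic combinatorial entropy $\Sigma(\rho)$ counting graphs with prescribed local statistics. A preliminary step is to observe that $I = +\infty$ off the set $\mathcal{U}$ of unimodular probability measures on $\Gstar$: since $U(G)$ is unimodular for every finite graph $G$ and $\mathcal{U}$ is weakly closed, any weak limit of empirical neighborhood distributions must lie in $\mathcal{U}$. The good rate function property then reduces to compactness of the sub-level sets $\{I \leq c\}$, which follows from a truncation argument controlling the tails of the degree distribution under $\rho$ (the $-\tfrac{1}{2}d\log\lambda$ term penalises large mean degree when $\lambda<1$, while an a priori Poisson-type tail bound on degrees of $G_{n,p_n}$ handles the case $\lambda \geq 1$).

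For the upper bound, I would fix a weak neighborhood $B$ of $\rho \in \mathcal{U}$ with mean degree $d = \mathbb{E}_\rho(\mathrm{deg}_{\sss G}(o))$ and decompose
\begin{equation*}
\mathbb{P}(U(G_{n,p_n}) \in B) = \sum_m \mathbb{P}(e_{G_{n,p_n}}=m)\, \mathbb{P}(U(G_{n,p_n})\in B \mid e_{G_{n,p_n}}=m).
\end{equation*}
A concentration argument restricts $m$ to the window $nd/2 + o(n)$. Combining the Bernoulli factor $p_n^m(1-p_n)^{\binom{n}{2}-m}$ with $\binom{\binom{n}{2}}{m}$ and Stirling, the $\log n$ contributions cancel and one is left with an exponential rate $-\tfrac{1}{2}\lambda + \tfrac{1}{2}d\log\lambda$. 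The residual conditional probability that a uniform random graph with $nd/2$ edges has empirical local law in $B$ then carries the remaining exponent $-\Sigma(\rho)+o(1)$; this is precisely where $\Sigma$ enters the picture. For the matching lower bound, I would construct, for each unimodular $\rho$ with $I(\rho) < \infty$, a sequence of deterministic ``model'' graphs $\widehat G_n$ with $U(\widehat G_n) \to \rho$ via a local configuration procedure that samples finite-depth neighborhoods from $\rho$ and glues them through a half-edge matching on $[n]$. Bounding $\mathbb{P}(G_{n,p_n} \supseteq \widehat G_n)$ from below by a product of Bernoulli weights and multiplying by the number of admissible constructions (which contributes $\exp(n\Sigma(\rho)+o(n))$) gives a matching lower bound, provided one verifies robustness: a small random perturbation of $\widehat G_n$ by the remaining $G_{n,p_n}$-edges keeps $U(G_{n,p_n})$ inside $B$ with probability bounded away from zero.

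The main obstacle is the definition, manipulation, and identification of $\Sigma(\rho)$ itself. The quantity must be defined intrinsically on $\mathcal{P}(\Gstar)$ so that it is (i) affine and upper semicontinuous (for the upper bound), (ii) expressible variationally via weighted embedding counts (for asymptotic evaluation), and (iii) provably equal to the exponential growth rate of the number of finite graphs whose empirical neighborhood distribution approximates $\rho$ (for the lower-bound construction). Bordenave and Caputo achieve this via a Bowen-type unimodular entropy together with a local Shannon--McMillan--Breiman statement, and must further approximate general $\rho$ by measures supported on bounded-depth trees while correctly accounting for short-cycle contributions that differentiate genuinely random graphs from their tree limits. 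It is this entropy-theoretic machinery, rather than the straightforward Bernoulli/Stirling accounting, that forms the substantive part of the proof.
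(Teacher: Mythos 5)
You should first note that the paper does not prove this proposition at all: it is imported verbatim as \cite[Theorem 1.8]{bordenave2015large}, and the only thing the authors add after the statement is the definition of $\Sigma(\rho)$ as the $\varepsilon\searrow 0$ limit of $\Sigma(\rho,\varepsilon)=\lim_{n\to\infty}\big(\log|\mathcal{G}_{n,m}(\rho,\varepsilon)|-m\log n\big)/n$. So there is no internal argument to compare yours against; the relevant benchmark is Bordenave--Caputo's own proof, which is a long, technical piece of work.

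Measured against that, your proposal is a plausible roadmap but not a proof, and you effectively concede this yourself in the final paragraph. Every substantive step is deferred: (i) the existence of the limits defining $\Sigma(\rho,\varepsilon)$ and $\Sigma(\rho)$, and the upper semicontinuity/approximation properties you list as requirements (i)--(iii), are exactly the content of Bordenave--Caputo's entropy machinery, not something your outline supplies; (ii) the identification of the conditional probability given $e_{G_{n,p_n}}=m\approx nd/2$ with $\exp(-n\Sigma(\rho)+o(n))$ is asserted, but it requires both directions of a counting statement uniformly over the neighborhood $B$, which is where the depth-truncation and short-cycle corrections you mention actually bite; (iii) the lower bound via a ``model graph'' $\widehat G_n$ with $U(\widehat G_n)\to\rho$ and a robustness step is precisely the hard construction in the original paper (one cannot simply plant $\widehat G_n$ inside $G_{n,p_n}$ and multiply by a count of $\exp(n\Sigma(\rho))$ constructions, since the events of containing different model graphs overlap and the extra Bernoulli edges must be shown not to push $U(G_{n,p_n})$ out of $B$ --- your ``probability bounded away from zero'' claim needs an argument); and (iv) exponential tightness and compactness of level sets in $\mathcal{P}(\Gstar)$ do not follow from the sign of $\log\lambda$ alone --- one needs uniform control of the degree distribution's tails and of $\Sigma(\rho)\leq s(d)$, again part of the cited machinery. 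In short, since this is a quoted external theorem, the appropriate treatment (and the one the paper takes) is to cite it; if you intend to reprove it, the missing pieces above are the theorem, not the Bernoulli/Stirling bookkeeping your sketch carries out.
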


To describe $\Sigma\left(\rho\right)$ appearing in the rate function, we need some further notation. Let $\mathcal{G}_{n,m}$ be the set of graphs on vertex set $[n]$ and $m$ edges. If $\rho \in \mathcal{P}(\Gstar)$, then define
	\begin{equation}
	\mathcal{G}_{n,m}(\rho, \varepsilon):=\{G \in \mathcal{G}_{n,m}: U(G) \in B(\rho, \varepsilon)\},
	\end{equation}
where $B(\rho, \varepsilon)$ denotes the $\varepsilon$-open ball around $\rho$ in L\'{e}vy-Prokhorov metric on $\mathcal{P}(\Gstar)$. 

For the convenience of the reader we recall the definition of the L\'{e}vy-Prokhorov metric. Consider a subset $A \subseteq \mathcal{B}(\Gstar)$, where $\mathcal{B}(\Gstar)$ be the Borel sigma-field of the space $\Gstar$ equipped with local topology, and define its $\varepsilon$-neighborhood by
	\begin{equation}
	A^{\varepsilon}=\cup_{g \in A}{B}_{\Gstar}(g,\varepsilon),
	\end{equation}  
where ${B}_{\Gstar}(g,\varepsilon)$ is an open ball in $\Gstar$ of radius $\varepsilon$ around $g$. Then the L\'{e}vy-Prokhorov metric $\pi\colon \mathcal{P}(\Gstar)^2 \to [0,\infty)$ is defined as follows: For $\rho_1, \rho_2\in \mathcal{P}(\Gstar)$,
	\begin{equation}
	\pi(\rho_1,\rho_2):= \inf\{\varepsilon>0:\rho_1(\constant) \leq \rho_2(A^\varepsilon)+\varepsilon \text{  and  } 
	\rho_2(\constant) \leq \rho_1(A^\varepsilon) + \varepsilon \text{  for all  }A\in \mathcal{B}(\Gstar)\}.
	\end{equation} 
Since $\Gstar$ is separable in the local topology, the topology induced by $\pi$ on $\mathcal{P}(\Gstar)$ is equivalent to the topology of weak convergence on $\mathcal{P}(\Gstar)$ (see \cite{billingsley1999convergence}). We write
	\begin{equation}
	\Sigma(\rho,\varepsilon)=\lim_{n\rightarrow \infty} \frac{\log{|\mathcal{G}_{n,m}(\rho, \varepsilon)|}-m\log{n}}{n}.
	\end{equation}
Finally, 
	\begin{equation}
	\Sigma(\rho) := \lim_{\varepsilon\searrow 0} \Sigma(\rho,\varepsilon).
	\end{equation} 
Write $s(d)=\frac{d}{2}-\frac{d}{2} \log{d}$. We are now ready to state and prove the main proposition in this section, which will immediately prove Theorem \ref{thm:428pm11oct21}:

\begin{prop}{\bf [Exponential concentration around $\text{\rm UGW} ({\rm Poi}(\lambda))$]}
\label{lem:433pm12sep21}
	For every $\varepsilon>0$, and $n\geq n(\varepsilon)$, there exists a constant $C_{\varepsilon}>0$ such that
	\begin{equation}
	\label{eqn:432pm12sep21}
		\mathbb{P}\left(U(G_n)\in B^c(\text{\rm UGW} ({\rm Poi}(\lambda)), \varepsilon) \right) \leq \exp\left(-C_{\varepsilon}n\right).
	\end{equation}
\end{prop}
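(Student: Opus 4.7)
The plan is to deduce the stated exponential bound directly from the Bordenave--Caputo large deviation principle (Proposition~\ref{thm:341pm30jun21}). Write $\rho^\star := \mathrm{UGW}(\mathrm{Poi}(\lambda))$ and fix $\varepsilon > 0$. Since $B(\rho^\star, \varepsilon)$ is open in the L\'evy--Prokhorov topology on $\mathcal{P}(\Gstar)$, its complement $F_\varepsilon$ is closed, so the LDP upper bound immediately gives
\[
\limsup_{n\to\infty} \frac{1}{n} \log \mathbb{P}\bigl(U(G_{n,p_n}) \in F_\varepsilon\bigr) \leq -\alpha_\varepsilon, \qquad \alpha_\varepsilon := \inf_{\rho \in F_\varepsilon} I(\rho).
\]
The whole task is therefore to show $\alpha_\varepsilon > 0$; once that is done, \eqref{eqn:432pm12sep21} follows by choosing any $0 < C_\varepsilon < \alpha_\varepsilon$ and taking $n$ large enough.

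The crux is to verify that $\rho^\star$ is the \emph{unique} zero of the rate function $I$ in \eqref{eqn:330pmjun21}. For this I would invoke the variational inequality established in \cite{bordenave2015large}: for every unimodular measure $\rho$ with expected root-degree $d = \mathbb{E}_\rho[\deg_{\sss G}(o)]$ one has $\Sigma(\rho) \leq s(d)$, with equality iff $\rho$ is the unimodular Galton--Watson tree with offspring law $\mathrm{Poi}(d)$. Inserting this into \eqref{eqn:330pmjun21} yields
\[
I(\rho) \;\geq\; \tfrac12 \lambda - \tfrac12 d\log\lambda - s(d) \;=\; \tfrac12\bigl[\lambda - d + d\log(d/\lambda)\bigr] \;\geq\; 0,
\]
where the right-hand side is (up to a factor of two) the Kullback--Leibler divergence $D_{\mathrm{KL}}(\mathrm{Poi}(d)\,\|\,\mathrm{Poi}(\lambda))$; it vanishes iff $d = \lambda$, and combining this with the equality case of the $\Sigma$-bound forces $\rho = \rho^\star$.

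With uniqueness in hand the remainder is standard: since $I$ is a good rate function (lower semi-continuous with compact sublevel sets) and $F_\varepsilon$ is closed, the infimum $\alpha_\varepsilon$ is attained at some $\rho_\varepsilon \in F_\varepsilon$, and because $\rho_\varepsilon \neq \rho^\star$ we obtain $\alpha_\varepsilon = I(\rho_\varepsilon) > 0$. The main obstacle is really the uniqueness statement itself: convergence in probability of $U(G_{n,p_n})$ to $\rho^\star$ only forces $I(\rho^\star) = 0$ and sub-exponential decay at any competing minimiser, so strict positivity of $I$ away from $\rho^\star$ has to be imported from the variational description of $\Sigma$ in \cite{bordenave2015large} rather than extracted from the LDP alone.
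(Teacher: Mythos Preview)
Your argument is correct and follows the same strategy as the paper: apply the LDP upper bound of Proposition~\ref{thm:341pm30jun21} to the closed set $F_\varepsilon$, then show $\inf_{F_\varepsilon} I > 0$ by verifying that $\rho^\star$ is the unique zero of $I$ via $\Sigma(\rho) \leq s(d)$ together with its equality case. Your version is in fact a little more direct than the paper's, which first invokes exponential tightness to intersect $F_\varepsilon$ with a compact set before applying the LDP bound; you instead apply the upper bound to the closed set directly and use goodness of $I$ to guarantee the infimum is attained, which is cleaner and avoids an unnecessary detour. One small citation point: the paper sources the equality case in $\Sigma(\rho) \leq s(d)$ (uniqueness of $\mathrm{UGW}(\mathrm{Poi}(d))$ as maximiser of $\Sigma$ among measures of given mean degree) from \cite[Remark~4.12]{backhausz2021typicality} rather than from \cite{bordenave2015large} itself.
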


\begin{proof}

We use the exponential tightness of a sequence of probability measures satisfying a large deviation upper bound with a rate function that has compact level sets, as stated in Proposition \ref{thm:341pm30jun21}. This implies that, for each positive integer $l\geq 1$, there is a compact set $C_l \subseteq \mathcal{P}(\Gstar)$ such that 
	\begin{equation}
	\label{eqn:226pm12sep21}
		\limsup_{n\rightarrow \infty}\frac{1}{n}\log \mathbb{P}\left(\rho \in C_l^c \right) \leq -l.
	\end{equation}
Now let $S_l=C_l\cap B^c(\text{\rm UGW}({\rm Poi}(\lambda)), \varepsilon)$. Then, for every fixed $\varepsilon>0$ and $n\geq n(\varepsilon)$,
	\begin{equation}
	\mathbb{P}\left(U(G_n)\in B^c(\text{\rm UGW}({\rm Poi}(\lambda)), \varepsilon) \right) \leq \mathbb{P}\left(U(G_n)\in S_l \right) +\exp(-C_l n),
	\end{equation} 
for some constant $C_l>0$. Note that $S_l$ is a compact set (since it is a closed subset of a compact set) Using Proposition \ref{thm:341pm30jun21} again we have, for any $\delta>0$ and $n \geq n(\delta)$,
	\begin{align}
	\frac{1}{n}	\log \mathbb{P}\left(U(G_n)\in S_l \right) &\leq -\inf_{\rho \in S_l} I(\rho)+\delta \notag \\
	&= -\min_{\rho \in S_l} I(\rho)+\delta.
	\end{align}
In the second line, we have used the fact that $I$ is lower semi-continuous, and therefore the minimum is attained on a compact set. 

We are done if we can show that $\min_{\rho \in S_l}I(\rho)>0$. First, let us recall from \cite[Remark 4.12]{backhausz2021typicality} that UGW$(\text{\rm Poi}(d))$ {\em uniquely} maximises $\Sigma(\rho)$ among all measures $\rho$ with mean degree $d$. (Note that this does not directly follow from \cite{bordenave2015large}. In particular, in \cite[Theorem 1.3]{bordenave2015large} below equation (9), the authors assume that $\rho_1$ has finite support, which can be removed due to \cite[Remark 4.12]{backhausz2021typicality}.) Also recall from \cite[Theorem 1.2]{bordenave2015large} that $\Sigma(\rho) \leq s(d)$, where $s(d)=\frac{d}{2}-\frac{d}{2} \log{d}$. Therefore, $\Sigma(\rho)<s(d)$ for all $\rho$ except $\rho$= UGW$(\text{Poi}(d))$.

Let $\rho'$ be a minimiser of $I$ on $S_l$ having mean degree $d$. Since $\rho' \neq$ UGW$(\text{Poi}(\lambda))$, there can be two cases:

\medskip\noindent
	\textbf{Case I:} $\rho'$ has mean degree $d\neq \lambda$.\par
	In this case,
	\begin{equation}
	\begin{aligned}
		\min_{\rho \in S_l}I(\rho)&=  \tfrac12 \lambda -\tfrac12 d\log{\lambda} - \max_{\rho \in S_l}\Sigma\left(\rho\right) \\
		&\geq \tfrac12 \lambda -\tfrac12d \log{\lambda} - s(d) \\
		&= \tfrac12 \lambda -\tfrac12 d \log{\lambda} -\tfrac12 d+\tfrac12 d\log{d}>0.
	\end{aligned}
	\end{equation}
	
\medskip\noindent
	\textbf{Case II:} $\rho'$ has mean degree $d=\lambda$.\par
	In this case, since $\rho' \neq$ UGW$(\text{\rm Poi}(\lambda))$, we must have $\Sigma(\rho')<s(\lambda)$, and therefore 
	\begin{equation}
	\begin{aligned}
		\min_{\rho \in S_l}I(\rho)&=  \tfrac12 \lambda -\tfrac12 \lambda \log{\lambda} - \max_{\rho \in S_l}\Sigma\left(\rho\right) \\
		&=  \tfrac12 \lambda -\tfrac12 \lambda \log{\lambda} - \Sigma\left(\rho'\right) \\
		&> \tfrac12 \lambda -\tfrac12 \lambda \log{\lambda} -s(\lambda)=0,
	\end{aligned}
	\end{equation}
which completes the proof.
\end{proof}

We can now complete the proof of Theorem \ref{thm:428pm11oct21}:

\begin{proof}[Proof of Theorem \ref{thm:428pm11oct21}] 
First note that
	\begin{equation}
	\label{eqn:435pm11oct21}
		\mathbb{P}\left(U(G_n)\in B^c(\text{UGW}({\rm Poi}(\lambda)), \varepsilon) \mid T\geq \constant k_n\right)
		\leq \frac{\mathbb{P}\left(U(G_n)\in B^c(\text{\rm UGW}({\rm Poi}(\lambda)), \varepsilon)\right)}
		{\mathbb{P}\left(T\geq \constant k_n\right)}.
	\end{equation}
We can upper bound the numerator in the right-hand side of \eqref{eqn:435pm11oct21} by Proposition \ref{lem:433pm12sep21}, and lower bound the denominator by Theorems \ref{thm:437pm28apr21} and \ref{thm:944pm28apr21}, to obtain
	\begin{equation}
	\begin{aligned}
	\mathbb{P}\left(U(G_n)\in B^c(\text{UGW}({\rm Poi}(\lambda)), \varepsilon) \mid T\geq \constant k_n\right)
	&\leq\frac{\exp(-C_{\varepsilon}n)}{\exp(-(1+\varepsilon_n)\tfrac12[6\constant(1+w_n)k_n]^{2/3} \log (1/p_n))}\\
	&= \exp\big(-C_{\varepsilon}n + C_a(1+o(1)) k_n^{2/3} \log (1/p_n)\big),
	\end{aligned}
	\end{equation}
where we use that $\varepsilon_n = o(1)$ and $w_n = o(1)$. The right-hand side tends to zero exponentially fast because of our assumption that $\lim_{n \to \infty} k_n^{2/3} \log (1/p_n)/n=0$.
\end{proof}


\section{Large deviations for the number of vertices in triangles}
\label{S7}

In this section, we prove Theorem \ref{thm-many-vertices-in-triangles} by providing sharp upper and lower bounds of the probability that at least $k_n$ vertices are part of some triangle in $G_{n,p_n}$. Roughly, the proof is based on the idea that if $G_{n,p_n}$ contains $k_n$ vertices that are part of some triangle, then there are approximately $k_n/3$ disjoint triangles in the graph. 

The proof of the lower bound in Section~\ref{S7.1} is relatively easy and involves computing an appropriate lower bound of the probability of the aforementioned event. The upper bound in Section~\ref{S7.2} is much more delicate. For this we carefully compute the number of ways a graph on $n$ vertices can contain $k_n$ many vertices that are part of a triangle. To do this, we find it more convenient to work with a `basic graph'. Roughly, basic graphs do not contain any {\em unnecessary edges}. More precisely, the edges that are not contributing to the formation of any triangle in the graph are removed. Interestingly, these basic graphs admit a structural decomposition, which in turn enables us to compute the number of basic graphs with a given number of edges. Using this fact, we compute the upper bound of the probability that in $G_{n,p_n}$ the $k_n$ vertices are part of some triangle. The details of the proof are presented in Section~\ref{S7.2}.


\subsection{Proof of the lower bound in Theorem \ref{thm-many-vertices-in-triangles}}
\label{S7.1}

In this section, we prove the lower bound in Theorem \ref{thm-many-vertices-in-triangles}, by proving the following more precise theorem:

\begin{thm}{\bf [Large deviation lower bound on the number of vertices in triangles]}
\label{thm:521pm10sep21}
$\mbox{}$\\
Let $(k_n)_{n\in\N}$ be such that $\lim_{n\to\infty} k_n = \infty$ and $k_n \leq n$. Then, there exists a constant $C>0$ such that, for $n$ large enough,
	\begin{equation}
	\log	\mathbb{P}(V_{\sss T}(G_{n,p_n})\geq k_n) \geq -\tfrac13 k_n\log(\tfrac13 k_n)-Ck_n.
	\end{equation}
\end{thm}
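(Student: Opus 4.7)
The plan is to construct a family of pairwise disjoint events inside $\{V_{\sss T}\geq k_n\}$ and sum their probabilities. Write $m:=\lceil k_n/3\rceil$ (so that $3m\geq k_n$), and let $\mathcal{F}_m$ denote the set of graphs on the vertex set $[n]$ consisting of exactly $m$ vertex-disjoint triangles and no other edges; then $|\mathcal{F}_m|=N_m:=n!/[(n-3m)!\,(3!)^m\,m!]$. For each $F\in\mathcal{F}_m$, writing $V^\star_F:=V(F)\subseteq[n]$ for the $3m$ vertices covered by $F$, set
\begin{equation*}
A_F:=\{G_{n,p_n}[V^\star_F]=F\}\cap\{V_{\sss T}(G_{n,p_n})\subseteq V^\star_F\}.
\end{equation*}
On $A_F$ every vertex of $V^\star_F$ lies in a planted triangle while no vertex outside does, so $V_{\sss T}(G_{n,p_n})=V^\star_F$ and $A_F\subseteq\{V_{\sss T}\geq 3m\geq k_n\}$. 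Moreover, the pair $(V_{\sss T}(G_{n,p_n}),\,G_{n,p_n}[V_{\sss T}(G_{n,p_n})])$ recovers $F$, so the events $\{A_F\}_{F\in\mathcal{F}_m}$ are pairwise disjoint. The corner cases $k_n\in\{n-1,n\}$ with $n\not\equiv 0\pmod 3$ are handled by an $O(1)$ adjustment of $m$.

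By independence of the three edge families (inside $V^\star_F$, inside $V^{\star c}_F$, and between the two), the probability factorises as
\begin{equation*}
\mathbb{P}(A_F)=p_n^{3m}(1-p_n)^{\binom{3m}{2}-3m}\cdot q_F,\qquad q_F:=\mathbb{P}\bigl(V_{\sss T}(G_{n,p_n})\subseteq V^\star_F\,\big|\,G_{n,p_n}[V^\star_F]=F\bigr).
\end{equation*}
The event defining $q_F$ says no triangle of $G_{n,p_n}$ meets $V^{\star c}_F$, and decomposes as the intersection of three decreasing events in the remaining independent Bernoulli edges, corresponding to forbidden triangles with $3$, $2$, or $1$ vertices in $V^{\star c}_F$ (the ``$2$-in-$V^\star$'' case requiring one of the fixed triangle-factor edges). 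Using $p_n=\lambda/n$ and $3m\leq n$, the three expected forbidden-triangle counts are bounded by $\lambda^3/6$, $3\lambda^2(3m)/n\leq 3\lambda^2$, and $\lambda^3(3m)/(2n)\leq 3\lambda^3/2$, respectively, so all are $O(1)$ uniformly in $m$. The FKG inequality, combined with $\prod_i(1-x_i)\geq\exp(-2\sum_ix_i)$ (valid for $x_i\leq 1/2$), then gives $q_F\geq c(\lambda)>0$ uniformly in $F$ and $m$.

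Summing,
\begin{equation*}
\mathbb{P}(V_{\sss T}\geq k_n)\geq c\cdot N_m\cdot p_n^{3m}(1-p_n)^{\binom{3m}{2}-3m}.
\end{equation*}
By Stirling's formula, $\log N_m=3m\log n-m\log m+O(m)$; using $p_n=\lambda/n$, $3m\log p_n=-3m\log n+O(m)$; and $(\binom{3m}{2}-3m)\log(1-p_n)=-O(m^2/n)=-O(m)$ since $3m\leq n$. Combining these and absorbing $\log c$ into the error gives
\begin{equation*}
\log\mathbb{P}(V_{\sss T}\geq k_n)\geq-m\log m+O(m)=-\tfrac{1}{3}k_n\log(\tfrac{1}{3}k_n)-Ck_n,
\end{equation*}
which is the claimed bound. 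The main subtlety is establishing $q_F\geq c(\lambda)>0$ uniformly in $m$: when $3m$ is linear in $n$ the expected numbers of bad triangles are $\Theta(1)$ rather than $o(1)$, so the naive complement-union bound is insufficient and one must genuinely invoke FKG (or Janson's inequality) to combine the three decreasing sub-events into a single uniform positive lower bound.
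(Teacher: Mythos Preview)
Your proof is correct and follows the same overall strategy as the paper: plant a triangle factor on roughly $k_n$ vertices, ensure no triangle escapes the planted set, and sum the resulting disjoint events. The Stirling computation and the conclusion match.

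There is one genuine technical difference worth noting. The paper handles the ``no extraneous triangle'' requirement by the cruder intersection $A_S\cap B_S\cap C_S$, where $B_S=\{$no edge between $S$ and $V\setminus S\}$ and $C_S=\{G[V\setminus S]$ is triangle-free$\}$; these three events live on disjoint edge sets and are therefore independent, so no correlation inequality is needed, at the cost of a factor $(1-p_n)^{(n-k_n)k_n}$ which is harmless since it only contributes $O(k_n)$ to the log. You instead keep the crossing edges random and bound the conditional probability $q_F$ via Harris/FKG applied to all ``bad-triangle absent'' events, using that the three expected counts are uniformly $O_\lambda(1)$. Your route is slightly sharper (it does not kill all crossing edges) and your indexing by triangle factors $F$ rather than sets $S$ makes the disjointness argument cleaner, since $(V_{\sss T},G[V_{\sss T}])$ recovers $F$ directly. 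The paper's route is more elementary. Both land on the same bound, and your remark that the complement-union bound is insufficient when $3m\sim n$ (so FKG or Janson is genuinely needed in your version) is well taken.
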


For the lower bound on $\mathbb{P}(V_{\sss T}(G_{n,p_n})\geq k_n)$, we construct an appropriate lower bounding event. For the rest of the proof, we assume that $k_n$ is divisible by $3$. It is easy to see that this does not affect our estimates. Indeed, if $k_n$ is not divisible by $3$, then $k_n+t$ must be divisible by $3$, where $t=1$ or $2$, and then $\mathbb{P}(V_{\sss T}(G_{n,p_n})\geq k_n) \geq \mathbb{P}(V_{\sss T}(G_{n,p_n})\geq k_n+t)$, and the contribution of $t$ is easily seen to be negligible. 

Define the event $\mathcal{T}_S$ by
	\begin{equation}
	\label{eqn:623pm24may21}
	\begin{aligned}	
	\mathcal{T}_S
	&:= \{S \text{ is a union of vertex-disjoint triangles and no other edge is present in }G[S]\} \\ 
	&\qquad\qquad \cap\{G\setminus E(G[S])\text{ is triangle-free}\},
	\end{aligned}
	\end{equation}
where we recall that $G[S]$ is the subgraph of $G$ induced by the vertex set $S$. Note that the definition of $\mathcal{T}_S$ is valid only when $|S|$ is divisible by $3$, and that $\mathcal{T}_S$ and $\mathcal{T}_{S'}$ are {\em disjoint} for any different sets $S, S'\subseteq [n]$.

\begin{lem}{\bf [Only vertex-disjoint triangles]}
\label{lem:210pm23jun21}
For $S \subseteq [n]$ with $|S|=k_n$,
	\begin{equation}
	\label{eqn:210pm23jun21}
	\mathbb{P}(\mathcal{T}_S) \geq \frac{k_n!}{(3!)^{\tfrac13 k_n}(\tfrac13 k_n)!} 
	p_n^{k_n}(1-p_n)^{{k_n \choose 2}- k_n + (n-k_n)k_n}\, 
	\tfrac{9}{10}\exp\left(-\tfrac16\lambda^3\right).
	\end{equation}
\end{lem}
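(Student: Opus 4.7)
The plan is to decompose $\mathcal{T}_S$ according to the (uniquely determined) partition of $S$ into triangles. On $\mathcal{T}_S$, $G[S]$ is by definition the union of $k_n/3$ vertex-disjoint triangles, so the event is a disjoint union over partitions $\mathcal{P}$ of $S$ into $k_n/3$ unordered triples of the event $\mathcal{T}_{S,\mathcal{P}}$ that (i) $G[S]$ consists of precisely the $k_n$ edges of the triangles of $\mathcal{P}$, and (ii) $G\setminus E(G[S])$ is triangle-free. The number of such partitions is exactly $k_n!/((3!)^{k_n/3}(k_n/3)!)$, which yields the combinatorial prefactor in the claimed bound.

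For a fixed partition $\mathcal{P}$, conditions (i) and (ii) depend on disjoint edge sets (inside $S$ vs.\ not inside $S$) and are therefore independent. Event (i) has probability $p_n^{k_n}(1-p_n)^{\binom{k_n}{2}-k_n}$, since exactly $k_n$ out of the $\binom{k_n}{2}$ potential edges of $G[S]$ must be present. For (ii), I would pass to the stronger sub-event requiring (ii-a) no edge between $S$ and $S^c$ is present, together with (ii-b) $G[S^c]$ is triangle-free; these two sub-events involve disjoint edge sets and are independent, and a short case analysis on how many vertices of a hypothetical triangle in $G\setminus E(G[S])$ lie in $S$ shows that (ii-a) and (ii-b) together imply (ii). Sub-event (ii-a) contributes the factor $(1-p_n)^{k_n(n-k_n)}$, while by the FKG (Harris) inequality applied to the decreasing events ``triangle $t$ absent'' for $t\subseteq S^c$,
\begin{equation*}
\mathbb{P}(G[S^c]\text{ triangle-free})\geq (1-p_n^3)^{\binom{n-k_n}{3}}.
\end{equation*}

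Multiplying these four factors and summing over $\mathcal{P}$ yields exactly the right-hand side of \eqref{eqn:210pm23jun21} up to the constant $\tfrac{9}{10}\exp(-\tfrac16\lambda^3)$. The only point that requires any estimate is showing that $(1-p_n^3)^{\binom{n-k_n}{3}}\geq \tfrac{9}{10}\exp(-\tfrac16\lambda^3)$ uniformly in $k_n\leq n$ for all large $n$; this follows from the elementary monotonicity $(1-p_n^3)^{\binom{n-k_n}{3}}\geq (1-p_n^3)^{\binom{n}{3}}$ (valid since $1-p_n^3<1$ and $\binom{n-k_n}{3}\leq\binom{n}{3}$) together with the classical convergence $(1-\lambda^3/n^3)^{\binom{n}{3}}\to\exp(-\tfrac16\lambda^3)$. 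This uniform-in-$k_n$ control is the only nontrivial step, and it is routine; no other obstacle arises.
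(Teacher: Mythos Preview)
Your proof is correct and follows essentially the same decomposition as the paper: split into independent events on the three edge sets (inside $S$, between $S$ and $S^c$, inside $S^c$), and pass to the stronger sub-event forbidding all $S$--$S^c$ edges. The paper writes this as $A_S\cap B_S\cap C_S\subseteq\mathcal{T}_S$ without explicitly summing over partitions, but the content is identical.

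The one genuine difference is in how you bound $\mathbb{P}(G[S^c]\text{ triangle-free})$. The paper simply invokes the Poisson limit for the triangle count in $G_{n,p_n}$ (implicitly using $\mathbb{P}(G[S^c]\text{ triangle-free})\geq\mathbb{P}(G_{n,p_n}\text{ triangle-free})$ to get uniformity in $k_n$, though this is not spelled out). Your route via the Harris/FKG inequality, $\mathbb{P}(G[S^c]\text{ triangle-free})\geq(1-p_n^3)^{\binom{n-k_n}{3}}\geq(1-p_n^3)^{\binom{n}{3}}\to\exp(-\tfrac16\lambda^3)$, is a clean self-contained alternative that makes the uniformity in $k_n$ explicit and avoids appealing to the Poisson convergence theorem. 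Both arrive at the same constant $\tfrac{9}{10}\exp(-\tfrac16\lambda^3)$ for $n$ large.
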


\begin{proof}
First observe that
	\begin{equation}
	A_S\cap B_S \cap C_S \subseteq \mathcal{T}_S,
	\end{equation}
where 
	\begin{equation}
	\begin{aligned}
	A_S &:= \{[S] \text{ is a union of vertex-disjoint triangles and no other edge is present in } G[S]\},\\
	B_S &:= \{\text{there is no edge between } V\setminus S \text{ and } S\},\\ 
	C_S &:= \{G[V \setminus S] \text{ is triangle-free}\}.
	\end{aligned}
	\end{equation} 
Then $A_S$, $B_S$, $C_S$ are independent events, so that
	\begin{equation}
	\mathbb{P}(\mathcal{T}_S) \geq \mathbb{P}(A_S) \mathbb{P}(B_S) \mathbb{P}(C_S).
	\end{equation}
Since the distribution of the number of triangles converges to a Poisson distribution with mean $\frac16\lambda^3$, for large enough $n$ we have $\mathbb{P}(C_S) \geq \tfrac{9}{10}\exp\left(-\frac16\lambda^3\right)$. It is easy to see that $\mathbb{P}(B_S)=(1-p_n)^{(n-k_n)k_n}$. Finally, for $\mathbb{P}(A_S)$ we assume that $k_n$ is divisible by $3$, and compute
	\begin{equation}
	\mathbb{P}(A_S) = \frac{k_n!}{(3!)^{\tfrac13 k_n} (\tfrac13 k_n)!}\,p_n^{k_n}(1-p_n)^{{k_n\choose 2}-k_n}.
	\end{equation}
We combine this with
	\begin{equation}
	\mathbb{P}(\mathcal{T}_S) \geq \frac{k_n!}{(3!)^{\tfrac13 k_n} (\tfrac13 k_n)!}\,
	p_n^{k_n}(1-p_n)^{{k_n\choose 2}-k_n+ (n-k_n)k_n}\,\tfrac{9}{10}\,\exp\left(-\tfrac16\lambda^3\right)
	\end{equation}
to arrive at the claim.
\end{proof}

Now we are ready to complete the proof of Theorem \ref{thm:521pm10sep21}:

\begin{proof}[Proof of Theorem \ref{thm:521pm10sep21}]
Note that
	\begin{equation}
	\mathbb{P}(V_{\sss T}(G_{n,p_n})\geq k_n) \geq \mathbb{P}(\cup_{|S|=k_n}\mathcal{T}_S) 
	= \sum_{|S|=k_n}\mathbb{P}(\mathcal{T}_S).
	\end{equation}
By Lemma \ref{lem:210pm23jun21},
	\begin{equation}
	\begin{aligned}
	\mathbb{P}(V_{\sss T}(G_{n,p_n})\geq k_n) &\geq C\frac{n(n-1)\cdots(n-k_n +1)}{(3!)^{\frac{k_n}{3}} 
	(\tfrac13 k_n)!}\, p_n^{k_n}(1-p_n)^{{k_n\choose 2}-k_n+ (n-k_n)k_n} \\
	&=C\frac{n!}{(n-k_n)!(3!)^{\frac{k_n}{3}} 
	(\tfrac13 k_n)!}\, p_n^{k_n}(1-p_n)^{{k_n\choose 2}-k_n+ (n-k_n)k_n}
	\end{aligned}
	\end{equation}
for some constant $C>0$. Using Stirling's approximation, 
	\eqn{
	\label{Stirling}
	\sqrt{2\pi}L^{L+{\frac{1}{2}}} \mathrm{e}^{-L}\leq L! \leq \mathrm{e}L^{L+{\frac{1}{2}}}\mathrm{e}^{-L},
	}
we get
        \begin{equation}
	\begin{aligned}
	\mathbb{P}(V_{\sss T}(G_{n,p_n})\geq k_n) &\geq C\frac{n^{n+\frac{1}{2}} \mathrm{e}^{k_n}}{(n-k_n)^{n-k_n+\frac{1}{2}}(3!)^{\frac{k_n}{3}} 
	(\tfrac13 k_n)!}\, p_n^{k_n}(1-p_n)^{{k_n\choose 2}-k_n+ (n-k_n)k_n} \\
	&\geq C\frac{n^{k_n} \mathrm{e}^{k_n}}{(3!)^{\frac{k_n}{3}} 
	(\tfrac13 k_n)!}\, p_n^{k_n}(1-p_n)^{{k_n\choose 2}-k_n+ (n-k_n)k_n}.
	\end{aligned}
	\end{equation}
Using Stirling's approximation in \eqref{Stirling} again, and the fact that $\log(1-x)\geq -2x$ for $0<x<\tfrac12$, we can write
	\begin{equation}
	\begin{aligned}
	\mathbb{P}(V_{\sss T}(G_{n,p_n})\geq k_n) &\geq C \exp\left(k_n \log{n} -Ck_n -k_n \log{n}-\tfrac{2}{n}(n-k_n)k_n\right) \\
	&\qquad\times \exp\left(-\tfrac13 k_n \log(\tfrac13 k_n)\right).
	\end{aligned}
	\end{equation}
Therefore, possibly after enlarging $C$, arrive at
	\begin{equation}
	\mathbb{P}(V_{\sss T}(G_{n,p_n})\geq k_n)\geq C \exp\left(-\tfrac13 k_n \log(\tfrac13 k_n)- Ck_n\right)
	\end{equation}
as required.
\end{proof}


\subsection{Proof of the upper bound in Theorem \ref{thm-many-vertices-in-triangles}}
\label{S7.2} 

In this section, we prove the upper bound on $\mathbb{P}(V_{\sss T}(G_{n,p_n})\geq k_n)$ stated in Theorem \ref{thm-many-vertices-in-triangles}:

\begin{thm}{\bf [Upper bound on the probability of many vertices in triangles]}
\label{thm:520pm10sep21}
$\mbox{}$\\
Let $(k_n)_{n\in\N}$ be such that $\lim_{n \to \infty}\frac{k_n}{\log n} = \infty$. Then, there exists a constant $C>0$ such that, for $n$ large enough,
	\begin{equation}
	\log	\mathbb{P}(V_{\sss T}(G_{n,p_n})\geq k_n) \leq  -\tfrac13 k_n\log(\tfrac13k_n)+Ck_n.
	\end{equation}
\end{thm}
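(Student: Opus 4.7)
The plan is to reduce the problem to counting \emph{basic} graphs---graphs in which every edge belongs to at least one triangle---and then to apply the greedy decomposition of Lemma~\ref{lem:219pm23may21} announced in the theorem statement. For any graph $G$, let $G_B := (V(G), E_T(G))$ denote its basic subgraph, where $E_T(G)$ consists of all edges of $G$ lying in at least one triangle of $G$. Then every edge of $G_B$ belongs to a triangle of $G_B$, the set of non-isolated vertices of $G_B$ equals $V_T(G)$, and $\mathbb{P}((G_{n,p_n})_B = H) \leq \mathbb{P}(G_{n,p_n} \supseteq H) = p_n^{e_H}$ for any basic graph $H$. Writing $\mathcal{H}_n(q,m)$ for the set of basic graphs on $[n]$ with exactly $q$ non-isolated vertices and $m$ edges, summing over $H$ with $V_T(H) \geq k_n$ gives
\begin{equation}
\mathbb{P}(V_T(G_{n,p_n}) \geq k_n) \leq \sum_{q \geq k_n}\sum_{m \geq q} |\mathcal{H}_n(q,m)|\, p_n^m,
\end{equation}
where the constraint $m \geq q$ comes from the fact that every non-isolated vertex of a basic graph has degree at least $2$, so $2m \geq 2q$.

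The next step is to bound $|\mathcal{H}_n(q,m)|$ using Lemma~\ref{lem:219pm23may21}. The lemma should provide a canonical way to represent any basic graph $H$ as an ordered sequence of triangles $T_1, T_2, \ldots, T_t$ with $T_1 \cup \cdots \cup T_t = H$, such that each $T_i$ with $i \geq 2$ contributes at least one previously unseen vertex; in particular $t \leq q$, and the minimum $m = q$ is attained precisely when $q$ is divisible by $3$ and the triangles are vertex-disjoint. A careful accounting---choosing the vertex support ($\binom{n}{q}$ ways), the disjoint-triangle baseline on that support ($q!/[(3!)^{q/3}(q/3)!]$ ways), and then attaching at most $m-q$ extra edges at a cost of $O(n)$ per extra edge---should yield a bound of the form
\begin{equation}
|\mathcal{H}_n(q,m)| \leq \binom{n}{q}\cdot \frac{q!}{(3!)^{q/3}(q/3)!} \cdot (Cn)^{m-q}.
\end{equation}

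Substituting this bound and using $\binom{n}{q}p_n^q \leq \lambda^q/q!$, the inner sum over $m \geq q$ is geometric in $Cnp_n = C\lambda$, which (after a crude truncation at $m = O(k_n)$ if $C\lambda \geq 1$) contributes at most $\exp(O(k_n))$. Stirling's formula then gives
\begin{equation}
\binom{n}{q}\frac{q!}{(3!)^{q/3}(q/3)!}p_n^q
\leq \frac{\lambda^q}{(3!)^{q/3}(q/3)!}
\leq \exp\!\left(-\tfrac{1}{3}q\log(\tfrac{1}{3}q)+O(q)\right),
\end{equation}
and the outer sum over $q \geq k_n$ is dominated by its $q = k_n$ term, since the ratio of consecutive ($q \to q+3$) terms is $\lambda^3/[6(q/3+1)] \to 0$. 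The main obstacle will be the combinatorial bound on $|\mathcal{H}_n(q,m)|$: the triangle decomposition of a basic graph is far from unique, so one must either fix a canonical ordering or absorb the multiplicity into $C$, while still showing that each extra edge beyond the disjoint-triangle baseline contributes at most $O(n)$ (which then matches the $p_n = \lambda/n$ factor in the probability, keeping the tail in $m$ harmless). A minor technical point is the case $q$ not divisible by $3$, handled by replacing $k_n$ with the next multiple of $3$, at a cost of only $O(1)$ in the exponent.
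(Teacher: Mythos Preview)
Your overall strategy---reduce to ``basic'' graphs in which every edge lies in a triangle, then bound their number---matches the paper's. The reduction step and the inequality $m\geq q$ are fine. The gap is entirely in your proposed count of $|\mathcal H_n(q,m)|$.

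First, you have misread Lemma~\ref{lem:219pm23may21}. It does \emph{not} represent a basic graph as an ordered union of triangles $T_1,\dots,T_t$. It partitions the vertex set into $V_1$ (a union of $\ell_1/3$ vertex-disjoint triangles), $V_2$ (a union of $\ell_2/2$ vertex-disjoint edges whose endpoints have a co-neighbour in $V_1$), and $V_3$ (an independent set of size $\ell_3$ whose vertices attach to existing edges). In particular, only $\ell_1$ of the $q$ vertices lie in vertex-disjoint triangles, and $\ell_1$ can be far smaller than~$q$.

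This is exactly why your encoding ``$q/3$ disjoint triangles on the support, plus $m-q$ extra edges'' cannot work: many basic graphs simply do not contain $\lfloor q/3\rfloor$ vertex-disjoint triangles. For instance, the book graph on six vertices (four triangles all sharing a common edge) is basic with $q=6$ and $m=9$, but it contains only \emph{one} vertex-disjoint triangle; it is never produced by your encoding. Likewise $K_4$ (which is basic in your sense with $q=4$, $m=6$) contains at most one vertex-disjoint triangle. Your bound
\[
|\mathcal H_n(q,m)|\le \binom{n}{q}\,\frac{q!}{(3!)^{q/3}(q/3)!}\,(Cn)^{m-q}
\]
is therefore unjustified. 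A second, related issue: once the $q$ support vertices are fixed, an ``extra edge'' among them has $\binom{q}{2}=O(q^2)$ possible locations, not $O(n)$; when $q=\Theta(n)$ this is $\Theta(n^2)$, and the sum over $m$ is no longer harmless.

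The paper closes this gap by counting according to the configuration $(\ell_1,\ell_2,\ell_3)$. For each fixed configuration, Lemma~\ref{lem:1149pm24may21} gives an honest encoding whose ``extra'' factors are $\ell_1^{\ell_2/2}(3q)^{\ell_3}$ (choices of co-neighbour vertex in $V_1$, respectively of the edge to attach to), and these are what legitimately get bounded by powers of~$n$. The denominator is then $(3!)^{\ell_1/3}(\ell_1/3)!\cdot 2^{\ell_2/2}(\ell_2/2)!\cdot \ell_3!$, not $(3!)^{q/3}(q/3)!$. Showing that this denominator is \emph{minimised} (up to $\mathrm{e}^{O(q)}$) when $\ell_1\approx q$---i.e., that the disjoint-triangle configuration really does dominate---is the content of the optimisation Lemma~\ref{lem:311pm23jun21}, and it is precisely the step you are assuming rather than proving. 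Your intuition that disjoint triangles give the leading term is correct, but it is the \emph{output} of the argument, not an input you may use to build the encoding.
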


First we develop some combinatorial tools that we will need along the way. We begin with a simple lemma that provides a lower bound on the number of edges for a graph that has $q$-vertices in triangles:

\begin{lem}{\bf [Edges and triangles]}
\label{lem:720pm22may21}
Let $G$ be a graph with $V_{\sss T}(G) = q$. Then $|E(G)| \geq q$.
\end{lem}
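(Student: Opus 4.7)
My plan is to exploit the fact that the set of vertices in triangles is closed under "sharing a triangle," which forces the induced subgraph on this set to have minimum degree at least $2$.

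First I would let $S \subseteq V(G)$ denote the set of vertices that are part of some triangle, so $|S|=q$ by hypothesis. The key observation is that if $v \in S$ lies in a triangle with vertices $\{u,v,w\}$, then both $u$ and $w$ are themselves in a triangle (namely the same one), so $u,w \in S$. Consequently, every triangle of $G$ is entirely contained in the induced subgraph $G[S]$, and moreover for each $v \in S$ its two triangle-partners $u,w$ are neighbors of $v$ inside $G[S]$. Hence every vertex of $G[S]$ has degree at least $2$ in $G[S]$.

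Next I would apply the handshake lemma to $G[S]$:
\begin{equation}
2\,|E(G[S])| \;=\; \sum_{v \in S} \deg_{G[S]}(v) \;\geq\; 2|S| \;=\; 2q,
\end{equation}
which gives $|E(G[S])| \geq q$. Since $E(G[S]) \subseteq E(G)$, we conclude $|E(G)| \geq q$, as required.

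There is essentially no obstacle here; the only subtlety is verifying that the triangle witnessing $v \in S$ actually lies in $G[S]$, which follows immediately from the definition of $V_{\sss T}$. The argument is self-contained and uses no prior results from the paper.
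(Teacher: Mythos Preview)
Your proof is correct and follows essentially the same approach as the paper: both arguments use that every vertex in a triangle has degree at least $2$ and then apply the handshake lemma. Your version is marginally sharper in that you restrict to the induced subgraph $G[S]$ and thereby actually show $|E(G[S])|\geq q$, whereas the paper just bounds $\sum_{v\in V(G)} d_v \geq \sum_{v\in S} d_v \geq 2q$ directly in $G$; but the underlying idea is identical.
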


\begin{proof}
Let $d_v$ denote the degree of vertex $v\in[n]$. If $v$ is part of a triangle, then $d_v \geq 2$. Let $v_1,v_2,\ldots,v_q$ be the vertices that are part of a triangle. Then
	\begin{equation}
	2|E(G)|=\sum_{v\in V(G)}d_v \geq \sum_{j=1}^q d_{v_j} \geq 2q.
	\end{equation} 
\end{proof}

We find it more convenient to work with a basic subgraph of a graph with at least $q$ vertices that are part of a triangle:

\begin{dfn}{\bf [$q$-basic graph]}
\label{def-q-basic-subgraph}
	{\rm A subgraph $G \subseteq K_n$ is called \emph{$q$-basic} if $V_{\sss T}(G) = q$ and $V_{\sss T}(G\setminus e) < q$ for all edges $e \in G$. Here, $G\setminus e$ denotes the graph with the edge $e$ removed.}\hfill$\spadesuit$
\end{dfn}

\begin{rmk}{\bf [Lower bound on edges in $q$-basic graphs]}
\label{rmk:311pm12oct21}
Note that every edge in a $q$-basic graph is part of some triangle, otherwise we can delete the edge without removing any triangle. \hfill$\blacksquare$
\end{rmk}

The next lemma shows that whenever there are $q$-vertices in a graph $G$ that are part of a triangle, then it contains a $q$-basic subgraph:

\begin{lem}{\bf [There is a $q$-basic graph spanning the vertices in triangles]}
\label{lem:706pm24may21}
$\mbox{}$\\
If $V_{\sss T}(G)=q$, then there is a subgraph $G'\subseteq G$ that is a $q$-basic graph.
\end{lem}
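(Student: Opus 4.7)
The statement is essentially a greedy minimality lemma, so the natural approach is a straightforward edge-deletion procedure. The plan is to start from $G$ and repeatedly remove an edge whenever doing so preserves $V_{\sss T}$, and then verify that the resulting graph is $q$-basic.

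More precisely, I would define a finite sequence $G = G_0 \supseteq G_1 \supseteq \cdots \supseteq G_s = G'$ as follows: at each step, if there exists an edge $e \in E(G_i)$ with $V_{\sss T}(G_i \setminus e) = q$, set $G_{i+1} := G_i \setminus e$; otherwise stop and set $G' := G_i$. Since $|E(G_i)|$ strictly decreases at each step and is bounded below by $0$, the procedure terminates in at most $|E(G)|$ steps.

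By construction $V_{\sss T}(G') = q$, so it remains to check that $V_{\sss T}(G' \setminus e) < q$ for every $e \in E(G')$. The one point worth stating carefully is the monotonicity observation that removing an edge cannot create a new triangle, and hence cannot increase $V_{\sss T}$; thus $V_{\sss T}(G' \setminus e) \le V_{\sss T}(G') = q$ for every edge $e \in E(G')$. Combined with the termination criterion (no edge $e$ of $G'$ satisfies $V_{\sss T}(G' \setminus e) = q$), this forces the strict inequality $V_{\sss T}(G' \setminus e) < q$, which is exactly Definition~\ref{def-q-basic-subgraph}.

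There is no real obstacle here; the whole argument is a one-paragraph greedy deletion. The only subtlety is being explicit about why $V_{\sss T}$ is monotone under edge removal (every triangle containing a vertex $v$ in $G' \setminus e$ is still a triangle in $G'$, so a vertex that is in a triangle of $G' \setminus e$ is automatically in a triangle of $G'$), which rules out the a priori possibility that $V_{\sss T}$ could jump above $q$ after a deletion.
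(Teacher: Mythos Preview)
Your proposal is correct and follows exactly the same greedy edge-deletion approach as the paper. If anything, you are slightly more careful than the paper's own proof: the paper simply asserts $V_{\sss T}(G'\setminus e)<q$ from the termination criterion, while you explicitly note the monotonicity of $V_{\sss T}$ under edge removal to rule out $V_{\sss T}(G'\setminus e)>q$.
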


\begin{proof} 
Starting with the graph $G$, we delete an edge $e$ such that $V_{\sss T}(G\setminus e)=q$. We repeat this until there is no such edge left. Let us denote this graph by $G'$. At this point, by our construction, $V_{\sss T}(G)=q$ but $V_{\sss T}(G'\setminus e)<q$ for every $e\in G'$. Therefore, by Definition \ref{def-q-basic-subgraph}, $G'$ a $q$-basic graph.
\end{proof}

The following is our key decomposition lemma of a $q$-basic graph, which will be seen to be instrumental in counting their number. This lemma gives us the rough structure of a $q$-basic graph. For a graph $G=(V(G),E(G))$ and $u,v,w \in V(G)$, $w$ is called a {\em co-neighbor} of a pair of vertices $u$ and $v$ if $(u,w),(v,w)\in E(G)$. Also, for $U\subseteq V$, $G[U]$ is the induced subgraph of $G$ on the vertex set $U$. Finally, $U \subseteq V$ is called an {\em independent set} if there is no edge in $G[U]$.

\begin{lem}{\bf [Decomposition lemma of $q$-basic graphs]}
\label{lem:219pm23may21}
The vertices of a $q$-basic graph $G$ can be partitioned into three disjoint parts $V_1, V_2, V_3$ such that the following properties hold:
\begin{enumerate}
\item[--]  $G[V_1]$ is a union of vertex-disjoint triangles.
\item[--] $G[V(G) \setminus V_1]$ does not contain a triangle.
\item[--] $G[V_2]$ is a union of vertex-disjoint edges, and the endpoints of each edge in $G[V_2]$ have a co-neighbour in $V_1$.
\item[--] $G[V_3]$ is an independent set, and each vertex in $V_3$ is either a co-neighbour of the endpoints of an edge in $V_1$, or a co-neighbour of the endpoints of an edge between $V_1$ and $V_2$.
\end{enumerate} 
\end{lem}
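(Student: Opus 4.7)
The plan is a two-stage greedy construction, in keeping with the informal name ``greedy decomposition lemma.''

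In Stage~1, I will build $V_1$ by greedy triangle extraction: start with $V_1:=\emptyset$ and, while $G[V(G)\setminus V_1]$ still contains a triangle $T$, add $V(T)$ to $V_1$. On termination $V_1$ will be the vertex set of a collection of vertex-disjoint triangles of $G$ and $H:=G[V(G)\setminus V_1]$ will be triangle-free, which gives the first two bullets.

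In Stage~2, I will build $V_2$ and $V_3$ by greedy matching extraction inside $H$: start with $V_2:=\emptyset$ and, while the subgraph of $H$ on the vertices not yet placed in $V_2$ still contains an edge $(u,v)$, add $\{u,v\}$ to $V_2$; finally set $V_3:=V(G)\setminus(V_1\cup V_2)$. By construction $V_2$ decomposes into vertex-disjoint edges of $G$, and by maximality of the greedy matching $G[V_3]$ is edge-free, giving the third bullet and the independence part of the fourth bullet. The co-neighbour conditions then follow from the $q$-basic hypothesis together with the triangle-freeness of $H$: for a matching edge $(u,v)\in G[V_2]$, Remark~\ref{rmk:311pm12oct21} places $(u,v)$ in a triangle of $G$ whose third vertex must lie in $V_1$ (since $H$ is triangle-free), yielding the required common co-neighbour; for $v\in V_3$, the fact $v\in V_{\sss T}(G)$ puts $v$ in some triangle $(v,x,y)$ of $G$, and independence of $G[V_3]$ combined with triangle-freeness of $H$ force either $x,y\in V_1$ (so $v$ is a co-neighbour of an edge in $V_1$) or exactly one of $x,y$ in $V_1$ with the other in $V_2$ (so $v$ is a co-neighbour of an edge between $V_1$ and $V_2$).

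The delicate point I anticipate is the precise reading of ``$G[V_1]$ is a union of vertex-disjoint triangles'' and of the analogous statement for $V_2$. Under the natural ``partition into triples (resp.\ pairs) each spanning a triangle (resp.\ edge) of $G$'' reading, the plan above is complete. Under the stricter reading that $G[V_1]$ contains no edges beyond the chosen triangles and $G[V_2]$ no edges beyond the chosen matching, the naive greedy may leave cross-edges inside $V_1$ (between two picked triangles sharing an edge of $G$, which by the $q$-basic property must lie in a third triangle) or extra edges inside $V_2$; the fix would lean more heavily on the $q$-basic minimality, in particular to show that $H$ must be a disjoint union of stars (if $u\in V(H)$ had two $H$-neighbours $v_1,v_2$, then minimality would force each $v_i$ to sit in the unique triangle $(u,v_i,p_i)$ with $p_i\in V_1$, giving $\deg_H(v_i)=1$), after which $V_2$ can be taken as the centre and one chosen leaf per star, and cross-edges inside $V_1$ can be absorbed by an analogous swap procedure.
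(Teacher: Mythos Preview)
Your proposal is correct and follows essentially the same approach as the paper: the two-stage greedy extraction (maximal triangle packing for $V_1$, then maximal matching in the remainder for $V_2$, with $V_3$ the leftover) is exactly what the paper does, and your derivation of the co-neighbour conditions from Remark~\ref{rmk:311pm12oct21} together with triangle-freeness of $G[V(G)\setminus V_1]$ and independence of $G[V_3]$ matches the paper's argument line for line.

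Your worry about the ``stricter reading'' is unnecessary here. The paper intends only the weaker statement---that $V_1$ partitions into triples each spanning a triangle and $V_2$ into pairs each spanning an edge---and this is all that the downstream counting lemmas (Lemmas~\ref{lem:603pm23may21}, \ref{lem:1149pm24may21}, \ref{lem:459pm24may21}) use: they bound the number of edges from above and below and count configurations by the positions of the triples and pairs, never asserting that $G[V_1]$ or $G[V_2]$ is free of additional edges. So your main construction already suffices, and the contingency plan in your final paragraph can be dropped.
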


\begin{proof}
First, we extract the vertex-disjoint triangles from $G$, one by one in an arbitrary order, and stop when no more triangles can be extracted. The set of vertices of the extracted triangles is denoted by $V_1$. Clearly, there is no triangle left in the graph induced by $G[V(G) \setminus V_1]$. 

Next, starting from $G[V(G) \setminus V_1]$, we extract vertex-disjoint edges, one by one in an arbitrary order, and stop when no vertex-disjoint edge is left. The set of vertices in the extracted edges is denoted by $V_2$. Note that every endpoint of an edge in $G[V_2]$ has a co-neighbour in $V_1$, because otherwise the edge is not part of any triangle, which is impossible by Remark \ref{rmk:311pm12oct21}.  

Finally, put $V_3:=V(G) \setminus(V_1\cup V_2)$. Note that $G[V_3]$ is an independent set, because otherwise there is an edge in $G[V_3]$ whose endpoints, by construction, would have been added to $V_2$. Also, each vertex is in a triangle and therefore each vertex in $G[V_3]$ is either a co-neighbour of the endpoints of an edge in $V_1$, or a co-neighbour of the endpoints of an edge between $V_1$ and $V_2$. Indeed, the other choice is impossible: if a vertex in $G[V_3]$ is a co-neighbour of the endpoints of an edge in $V_2$, then we get a triangle outside $V_1$, which violates our construction.
\end{proof}

Lemma \ref{lem:219pm23may21} gives us a natural way to decompose the $q$-basic graphs in terms of the sizes of $V_1, V_2$ and $V_3$.

\begin{dfn}{\bf [$(\ell_1,\ell_2,\ell_3)$ configuration of $q$-basic graphs]}
\label{def-ell-configuration}
{\rm A $q$-basic graph $G$ has an $(\ell_1,\ell_2,\ell_3)$ configuration if there is a decomposition as in Lemma \ref{lem:219pm23may21} with $|V_i|=\ell_i$ for $i=1,2,3$. (Note that $q=\ell_1+\ell_2+\ell_3$.)}\hfill $\spadesuit$
\end{dfn}

Definition \ref{def-ell-configuration} is useful to upper bound the number of edges in $q$-basic graphs:

\begin{lem}{\bf [Upper bound on edges in $q$-basic graphs]}
\label{lem:603pm23may21}
A $q$-basic graph has at most $3q$ edges.
\end{lem}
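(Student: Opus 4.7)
The plan is a double-counting argument that charges each edge of $G$ to a vertex of $V_{\sss T}(G)$ and shows that no vertex is charged more than three times. Since $G$ is $q$-basic, for every edge $e\in E(G)$ there exists at least one vertex $c(e)\in V_{\sss T}(G)$ that belongs to a triangle of $G$ but to no triangle of $G\setminus e$; fix one such $c(e)$ arbitrarily. The defining feature of this choice is that \emph{every} triangle of $G$ passing through $c(e)$ must use the edge $e$, for otherwise that triangle would survive in $G\setminus e$ and $c(e)$ would remain in $V_{\sss T}(G\setminus e)$.

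The key combinatorial step will be the following dichotomy for $v\in V_{\sss T}(G)$:
\begin{enumerate}
\item[(a)] if $v$ lies in exactly one triangle $(v,a,b)$ of $G$, then the only edges $e$ for which $v=c(e)$ is possible are $(v,a)$, $(v,b)$ and $(a,b)$;
\item[(b)] if $v$ lies in at least two triangles of $G$, then at most one edge $e$ can satisfy $v=c(e)$.
\end{enumerate}
Part (a) is immediate from the observation above. For part (b), assume $v=c(e)$. If $v\notin e=(a,b)$, then every triangle through $v$ must contain both $a$ and $b$, forcing $v$ to lie in the single triangle $(v,a,b)$ and contradicting (b); hence $v\in e$, say $e=(v,w)$. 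All triangles through $v$ then contain $w$, and a hypothetical second edge $(v,w')$ with $c((v,w'))=v$ would require every triangle through $v$ to contain both $w$ and $w'$, leaving only the single triangle $(v,w,w')$ and again contradicting (b).

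Combining (a) and (b) gives $|c^{-1}(v)|\leq 3$ for every $v\in V_{\sss T}(G)$, whence
\[
|E(G)| \;=\; \sum_{v\in V_{\sss T}(G)} |c^{-1}(v)| \;\leq\; 3\,|V_{\sss T}(G)| \;=\; 3q.
\]
The main obstacle is the short case analysis in (b), in particular ruling out the $v\notin e$ sub-case when $v$ lies in several triangles. The argument uses only Definition \ref{def-q-basic-subgraph} and Remark \ref{rmk:311pm12oct21}, not the full decomposition of Lemma \ref{lem:219pm23may21}; the decomposition seems better reserved for the next step, namely counting basic graphs by their $(\ell_1,\ell_2,\ell_3)$-type in the proof of Theorem \ref{thm:520pm10sep21}.
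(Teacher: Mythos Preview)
Your charging argument is correct and genuinely different from the paper's proof. The paper invokes the decomposition of Lemma \ref{lem:219pm23may21}: given the partition $V_1\cup V_2\cup V_3$, it tallies at most $\ell_1$ edges inside $V_1$ (the disjoint triangles), $\tfrac32\ell_2$ edges for $V_2$ (the matching plus one co-neighbor edge per matched pair), and $3\ell_3$ edges for $V_3$, and then argues that a $q$-basic graph has no further edges. Your approach bypasses the decomposition entirely: you charge each edge $e$ to a vertex $c(e)$ that loses all its triangles upon removal of $e$, and the dichotomy (a)/(b) cleanly caps the charge at $3$ per vertex. The case analysis in (b) is sound; in particular, the sub-case $v\notin e$ really does force $v$ into the single triangle $(v,a,b)$, and two distinct incident edges $(v,w),(v,w')$ with the same charge would pin $v$ to the single triangle $(v,w,w')$.

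What you gain is a self-contained argument that uses only Definition \ref{def-q-basic-subgraph}, and one that is arguably tighter: your bound is $3$ for vertices in a unique triangle and $1$ otherwise, whereas the paper's count is $3$ uniformly on $V_3$. What the paper's route buys is economy of exposition, since the decomposition is needed anyway for the enumeration in Lemma \ref{lem:1149pm24may21} and Lemma \ref{lem:459pm24may21}; having proved it, the edge bound drops out as a corollary. Your remark that the decomposition is better reserved for the counting step is reasonable, but note that the paper does reuse it there, so either ordering works.
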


\begin{proof} 
Suppose that the $q$-basic graph has an $(\ell_1,\ell_2,\ell_3)$ configuration. Then $G[V_1]$ has $\ell_1/3$ disjoint triangles, which contribute $\ell_1$ edges. Note that these edges put all vertices in $V_1$ in some triangle.

Next, $\ell_2/2$ edges in $G[V_2]$ are due to its $\ell_2/2$ disjoint edges. Since, by Lemma \ref{lem:219pm23may21}, the pair of endpoints of each edge in $V_2$ has a co-neighbor in $V_1$, they contribute another $\ell_2$ edges. Therefore in total we get ${3\ell_2}/2$ edges, and all vertices in $V_2$ are now part of some triangle.

Finally, since each vertex in $V_3$ is an independent set, and all vertices are in some triangle, the maximum number of edges needed to put each vertex in a triangle is $3\ell_3$. At this point all vertices in $V_3$ are part of some triangle as well.

Since the graph is $q$-basic, there are no more edges present in the graph (recall Remark \ref{rmk:311pm12oct21}). Therefore the graph has at most $\ell_1+\tfrac32 \ell_2+3\ell_3$ edges. Since $\ell_1+\ell_2+\ell_3=q$, we arrive at the conclusion that a $q$-basic graph has at most $3q$ edges.
\end{proof}

The next lemma counts the number of $q$-basic graphs with configuration $(\ell_1,\ell_2,\ell_3)$.

\begin{lem}{\bf [The number of $q$-basic graphs in configuration]}
\label{lem:1149pm24may21}
The number of $q$-basic subgraphs of $K_n$ with configuration $(\ell_1,\ell_2,\ell_3)$ is at most 
	\begin{equation}
	\tfrac{n^{q}} {(3!)^{\tfrac13 \ell_1} \times (\tfrac13 \ell_1)!\times (2!)^{{\tfrac12 \ell_2}}\times (\tfrac12 \ell_2)! 
	\times \ell_3!} \times \ell_1^{\tfrac12 \ell_2} \times \left(3q\right)^{\ell_3}.
	\end{equation}
\end{lem}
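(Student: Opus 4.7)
My plan is to associate each $q$-basic graph $G$ with configuration $(\ell_1,\ell_2,\ell_3)$ with a tuple (an ``encoding'') consisting of (i) a decomposition $(V_1,V_2,V_3)$ as in Lemma~\ref{lem:219pm23may21}; (ii) the triangle partition of $V_1$; (iii) the matching of $V_2$; (iv) for each edge of $G[V_2]$, a chosen co-neighbor in $V_1$; and (v) for each vertex of $V_3$, a chosen ``witness triangle'' containing it, equivalently a chosen edge of $G[V_1\cup V_2]$ to which the vertex is adjacent at both endpoints. The count of such tuples produces exactly the product of the five factors appearing in the bound, so the task reduces to showing that this data recovers $G$ uniquely, which turns the count into an upper bound on the number of $q$-basic graphs.

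The heart of the proof is a rigidity claim asserting that, in any decomposition of a $q$-basic graph as in Lemma~\ref{lem:219pm23may21}, (a) each edge of $G[V_2]$ has a unique co-neighbor in $V_1$; (b) each vertex $x\in V_3$ lies in a unique triangle of $G$; and (c) every edge of $G$ between $V_1$ and $V_2$ is either a co-neighbor edge from (a) or lies in the triangle from (b) for some $x\in V_3$. All three statements are proved by a ``remove-an-edge'' argument: given any hypothetical extra co-neighbor, extra triangle through a $V_3$-vertex, or extra inter-part edge, one exhibits some edge whose removal preserves $V_{\sss T}(G)$, contradicting $q$-basicity. The swaps are driven by the observation that each vertex of $V_2$ is already rescued by a dedicated co-neighbor triangle $(u,v,c)$ and each vertex of $V_1$ by its prescribed $V_1$-triangle, so an ``extra'' edge can typically be deleted without costing any triangle-membership, except when the deletion would destroy the unique witness triangle of some $x\in V_3$, in which case one argues that a different candidate edge can be removed instead.

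Once rigidity is in place, the encoding determines $E(G)$ completely: edges inside $V_1, V_2, V_3$ are read off from the decomposition, and every remaining edge arises from either the co-neighbor or the witness-triangle choices. Counting the encodings is then routine:
\begin{equation*}
\tfrac{n!}{\ell_1!\,\ell_2!\,\ell_3!\,(n-q)!}\cdot\tfrac{\ell_1!}{(3!)^{\ell_1/3}(\ell_1/3)!}\cdot\tfrac{\ell_2!}{(2!)^{\ell_2/2}(\ell_2/2)!}\cdot \ell_1^{\ell_2/2}\cdot (3q)^{\ell_3},
\end{equation*}
where the final factor uses Lemma~\ref{lem:603pm23may21} (total edge count at most $3q$) as a loose upper bound on the number of eligible witness edges, namely edges in $G[V_1]$ or between $V_1$ and $V_2$. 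Applying $n!/(n-q)!\leq n^q$ and cancelling $\ell_1!$ and $\ell_2!$ against the numerators of the next two factors yields the stated bound.

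I expect the main obstacle to be item (c) of the rigidity claim. Lemma~\ref{lem:219pm23may21} supplies a convenient decomposition but does not directly exclude ``stray'' edges between $V_1$ and $V_2$ whose containing triangle has its third vertex inside $V_1$ rather than in $V_3$. Handling such edges requires a careful case analysis tracking, for each candidate removal, which triangles remain to witness each affected vertex's triangle-membership, with particular attention to edges that might simultaneously play the role of a $V_3$-witness triangle.
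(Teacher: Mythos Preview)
Your approach is essentially the same as the paper's: both count encodings consisting of the decomposition, co-neighbor choices, and witness edges, and bound the number of such encodings by the stated product. The paper's proof simply writes down the formula and explains each factor combinatorially; it does \emph{not} explicitly argue that the encoding determines the graph. Your proposal fills this gap by formulating and proving the rigidity claims (a)--(c), which is precisely what is needed to make the injection from graphs to encodings valid.

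Two remarks on the rigidity claims. First, claim (b) is clean: if $x\in V_3$ lay in two distinct triangles, one could delete an edge from $x$ to a vertex of the ``spare'' triangle, and since every affected neighbor lies in $V_1\cup V_2$ (so is rescued by its own $V_1$-triangle or co-neighbor triangle), this would contradict $q$-basicity. Once (b) holds, each $x\in V_3$ has degree exactly $2$, so all edges incident to $V_3$ are captured. Second, you flag (c) as the main obstacle, but in fact (c) follows fairly directly from (a) and (b): given a stray $V_1$--$V_2$ edge $e=\{a,u\}$ that is neither a co-neighbor edge nor any $V_3$-vertex's witness edge, deleting $e$ leaves $a$ in its $V_1$-triangle, $u$ in its (unique, by (a)) co-neighbor triangle, and any third vertex $w$ of a triangle through $e$ either in $V_1\cup V_2$ (hence rescued) or in $V_3$ (whence $e$ would be $w$'s witness edge, a contradiction). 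The genuinely delicate step is (a): if a matching edge $\{u,v\}$ had two co-neighbors $c_1,c_2\in V_1$, the obvious deletion of $\{u,c_2\}$ can be blocked by a $V_3$-vertex whose unique triangle is $(u,c_2,w)$, and one must then try $\{v,c_2\}$, $\{u,c_1\}$, $\{v,c_1\}$, and finally $\{u,v\}$ itself, checking in the last case that $u$ and $v$ are each rescued by one of the blocking $V_3$-triangles. So the case analysis you anticipate is real, but it lives inside (a) rather than (c).
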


\begin{proof}
Using Lemma \ref{lem:219pm23may21}, we get that the number of $q$-basic subgraphs of $K_n$ with configuration $(\ell_1,\ell_2,\ell_3)$ is at most
	\begin{equation}
	\tfrac{n(n-1)\cdots(n-\ell_1-\ell_2-\ell_3+1)}{(3!)^{\tfrac13 \ell_1} \times (\tfrac13 \ell_1)! 
	\times (2!)^{{\tfrac12 \ell_2}} \times (\tfrac12 \ell_2)! \times \ell_3!}  \ell_1^{\tfrac12 \ell_2}  
	\left(|E\left(V_1\right)| + |E\left(V_1,V_2\right)|\right)^{\ell_3}.
	\end{equation}
First, let us explain the combinatorial factor 
	\begin{equation}
	\tfrac{n(n-1)\cdots(n-\ell_1-\ell_2-\ell_3+1)}{(3!)^{\tfrac13 \ell_1} \times (\tfrac13 \ell_1)! 
	\times (2!)^{{\tfrac12 \ell_2}} \times (\tfrac12 \ell_2)! \times \ell_3!}.
	\end{equation}
The numerator counts the number of ways to select the $q=\ell_1+\ell_2+\ell_3$ vertices, the term $ (\tfrac13 \ell_1)!$ in the denominator counts the number of permutation of the $\ell_1/3$ triangles in $V_1$, while $(3!)^{\tfrac13 \ell_1}$ counts the permutations of the vertices of each of the individual triangles. Similarly, $(\tfrac12 \ell_2)!$ accounts for the permutation of the edges in $V_2$, and $(2!)^{{\tfrac12 \ell_2}} $ accounts for the permutations of the vertices of each of the individual edges. Finally, $\ell_3!$ counts the permutations of the vertices in $V_3$.

Second, $\ell_1^{\tfrac12 \ell_2}$ is the number of ways the endpoints of the $\ell_2/2$ vertex-disjoint edges can select a co-neighbor from $V_1$, and $\left(|E\left(V_1\right)| + |E\left(V_1,V_2\right)|\right)^{\ell_3}$ is the number of ways a vertex in $V_3$ can be part of some triangle, i.e., each vertex in $V_3$ is either a co-neighbor of an edge in $G[V_1]$ or a co-neighbor of an edge between $V_1$ and $V_2$.

Using Lemma \ref{lem:603pm23may21}, this can be bounded from above by
	\begin{equation}
	\tfrac{n^{\ell_1+\ell_2+\ell_3}} {(3!)^{\tfrac13 \ell_1} \times (\tfrac13 \ell_1)!\times (2!)^{{\tfrac12 \ell_2}}
	\times (\tfrac12 \ell_2)! \times \ell_3!} \times \ell_1^{\tfrac12 \ell_2} \times \left(3q\right)^{\ell_3}.
	\end{equation}
\end{proof}

Before investigating the number of $q$-basic graphs with $m$ edges, we study a minimization problem that appears in it:

\begin{lem}{\bf [Optimization problem]}
\label{lem:311pm23jun21}
For $q$ sufficiently large, the minimum value of the function 
	\begin{equation}
	f(x_1,x_2,x_3) = \tfrac13 x_1 \log(\tfrac13 x_1) + \tfrac12 x_2 \log(\tfrac12 x_2)+ x_3\log x_3
	\end{equation}
subject to the constraint $x_1+x_2+x_3=q$ is at least $\tfrac13 (q-q^{2/3}\log{q})\log(\tfrac13 (q-q^{2/3})\log q)$.
Further, the maximizers satisfy $q-q^{2/3}\log q \leq x_1 \leq q$ and $0 \leq x_2+x_3 \leq q^{2/3} \log q$.
\end{lem}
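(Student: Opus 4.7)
The plan is to exploit strict convexity together with Lagrange multipliers. Each summand $g_a(x) := (x/a)\log(x/a)$ has second derivative $1/(ax) > 0$ on $(0,\infty)$ and extends continuously to $g_a(0) = 0$, so $f$ is strictly convex on the compact simplex
\[
\Delta_q := \{(x_1,x_2,x_3) \in \mathbb{R}_{\geq 0}^3 \colon x_1+x_2+x_3 = q\}
\]
and hence admits a unique minimizer on $\Delta_q$. Moreover, since $g_a'(0^+) = -\infty$, no coordinate of the minimizer can vanish, so the minimizer lies in the relative interior of $\Delta_q$ and is characterized by the Lagrange conditions $\partial_{x_i} f = \lambda$ for $i\in\{1,2,3\}$. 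These unravel to $\log(x_i/a_i) = a_i\lambda - 1$ with $(a_1,a_2,a_3)=(3,2,1)$; setting $u := e^{\lambda-1} > 0$ yields the explicit expressions
\[
x_1^* = 3e^2 u^3,\qquad x_2^* = 2e u^2,\qquad x_3^* = u,
\]
where $u = u(q)$ is the unique positive root of $3e^2 u^3 + 2e u^2 + u = q$.

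Next, I extract the asserted location estimates. From $3e^2 u^3 \leq q$ I get $u \leq c_0 q^{1/3}$ with $c_0 := (3e^2)^{-1/3}$, whence
\[
x_2^* + x_3^* = 2e u^2 + u \leq 2e c_0^2\, q^{2/3} + c_0\, q^{1/3}.
\]
For $q$ sufficiently large the right-hand side is at most $q^{2/3}\log q$, giving $0\leq x_2^* + x_3^* \leq q^{2/3}\log q$ and equivalently $q - q^{2/3}\log q \leq x_1^* \leq q$, which is the asserted location statement (reading ``maximizers'' as ``minimizers'').

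Finally, for the value of $f$ at the minimizer, observe that $u \to \infty$ as $q \to \infty$, so for $q$ large enough $u \geq 1$ and hence $x_i^*/a_i \geq 1$ for each $i$, forcing each summand $g_{a_i}(x_i^*) \geq 0$. In particular,
\[
f(x_1^*,x_2^*,x_3^*) \geq \tfrac13 x_1^*\log(x_1^*/3).
\]
The map $\phi(y) := (y/3)\log(y/3)$ has derivative $\tfrac13(\log(y/3)+1)$ and is therefore strictly increasing on $[3/e,\infty)$. Since $x_1^* \geq q - q^{2/3}\log q \geq 3/e$ for $q$ large, monotonicity gives
\[
\tfrac13 x_1^* \log(x_1^*/3) \geq \tfrac13(q - q^{2/3}\log q)\log\!\left(\tfrac13(q - q^{2/3}\log q)\right),
\]
which matches the claimed lower bound (with the natural reading of the argument of the outer logarithm in the lemma statement). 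The whole argument is essentially bookkeeping once the Lagrangian structure is in place; the only mildly delicate point is the verification that the Lagrangian critical point lies in the interior, which is automatic from $g_a'(0^+)=-\infty$ combined with $u > 0$, so no substantive obstacle is anticipated.
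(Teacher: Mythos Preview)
Your proof is correct and follows essentially the same Lagrange-multiplier approach as the paper: both parametrize the critical point via the multiplier (your $u=e^{\lambda-1}$ is the paper's $e^{\mu-1}$), substitute into the constraint, and read off the bounds on $x_1^*$ and $x_2^*+x_3^*$. Your version is slightly tidier in two places: you justify explicitly via strict convexity and $g_a'(0^+)=-\infty$ that the minimizer is unique and interior, and you spell out why the non-$x_1$ summands can be dropped (they are nonnegative once $u\geq 1$) before applying monotonicity of $\phi$; the paper leaves both of these implicit.
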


\begin{proof}
We use the Lagrange multiplier method. Define
	\begin{equation}
	L := \tfrac13 x_1 \log(\tfrac13 x_1) + \tfrac12 x_2 \log(\tfrac12 x_2)+ x_3\log x_3 + \mu(q-x_1-x_2-x_3).
	\end{equation}
Differentiating $L$ with respect to $x_1,x_2,x_3$, we get
	\begin{equation}
	\frac{\partial L}{\partial x_1} = \tfrac13 \log(\tfrac13 x_1) + \tfrac13 - \mu,\,\, 
	\frac{\partial L}{\partial x_2} = \tfrac12 \log(\tfrac12 x_2) + \tfrac12 - \mu,\,\, 
	\frac{\partial L}{\partial x_3} = \log x_3 + 1 -\mu,
	\end{equation}
with $\mu$ the Lagrange multiplier. Setting these derivatives equal to zero, and substituting them into $x_1+x_2+x_3=q$, we get
	\begin{equation}
	\label{eqn:627pm22jun21}
	3\mathrm{e}^{3\mu-1}\left(1+\tfrac23 \mathrm{e}^{-\mu}+\tfrac13 \mathrm{e}^{-2\mu}\right)=q.
	\end{equation}
Note that the left-hand side of \eqref{eqn:627pm22jun21} is strictly increasing in $\mu$, and therefore there is a unique solution to \eqref{eqn:627pm22jun21}. Moreover, the solution must be less than $\mu_{U}$, where $3\exp{\left(3\mu_U-1\right)} = q$. 

Next, we show that when $q$ is large enough, the solution is larger than $\mu_{L}$, where $3\exp{\left(3\mu_L-1\right)} = q-q^{2/3}\log{q}$. To see why, substitute $\mu_L$ into \eqref{eqn:627pm22jun21}, to get that the left-hand side is equal to
	\begin{equation}
	\left(q-q^{2/3}\log{q}\right)\left(1+\tfrac{2}{3}\left(\tfrac{3}{(q-q^{2/3}\log{q})\mathrm{e}}\right)^{1/3}
	+\tfrac{1}{3}\left(\tfrac{3}{(q-q^{2/3}\log{q})\mathrm{e}}\right)^{2/3}\right).
	\end{equation}
Now using the fact that $q-q^{2/3}\log{q}\geq 0.9q$ for $q$ sufficiently large, for some constant $C$, we see that the last display is bounded above by
	\begin{equation}
	\left(q-q^{2/3}\log{q}\right)\left(1+\frac{C}{q^{1/3}}+\frac{C}{q^{2/3}}\right)<q. 
	\end{equation} 
Hence, for $q$ sufficiently large, the solution of \eqref{eqn:627pm22jun21} lies in the interval 
	\begin{equation}
	(\mu_L,\mu_U) = \left(\tfrac13\log(\tfrac13(q-q^{2/3}\log{q}))+\tfrac13, 
	\tfrac13 \log\tfrac12 q + \tfrac13\right).
	\end{equation}
Therefore $q-q^{2/3}\log q \leq x_1 \leq q$, and consequently $0 \leq x_2+x_3 \leq q^{2/3} \log q$. Hence the minimum value of $f$ is at least $\tfrac13 (q-q^{2/3}\log q)\log(\tfrac13 (q-q^{2/3}\log q))$.
\end{proof}

The next lemma bounds the number of $q$-basic graphs in terms of the number of its edges.

\begin{lem}{\bf [The number of $q$-basic graphs in terms of edges]}
\label{lem:459pm24may21}
For $q$ sufficiently large, the number of $q$-basic subgraphs of $K_n$ with $m$ edges is at most 
	\begin{equation}
	C(3q)^3 \exp\left(m\log n -\tfrac13 q \log(\tfrac13 q)+16q\right)
	\end{equation}
for some constant $C>0$.
\end{lem}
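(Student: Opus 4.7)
The plan is to apply the per-configuration bound from Lemma~\ref{lem:1149pm24may21} to every admissible triple $(\ell_1,\ell_2,\ell_3)$ with $\ell_1+\ell_2+\ell_3=q$, and to convert the prefactor $n^q$ into the target $n^m$ by exploiting the fact that a $q$-basic graph in configuration $(\ell_1,\ell_2,\ell_3)$ carries at least $\tfrac12\ell_2+\ell_3$ ``excess'' edges beyond its $q$ triangle-vertices. Since the number of admissible triples is at most $(3q)^3$, it will suffice to bound each per-configuration count by $\exp\bigl(m\log n - \tfrac13 q \log(\tfrac13 q)+C'q\bigr)$ uniformly in the configuration, for an absolute constant $C'$.

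The key step is the edge-counting inequality $m \geq \ell_1+\tfrac32\ell_2+2\ell_3 = q+\tfrac12\ell_2+\ell_3$ for any $q$-basic graph in configuration $(\ell_1,\ell_2,\ell_3)$. I would derive this by unpacking Lemma~\ref{lem:219pm23may21}: the disjoint triangles in $V_1$ contribute $\ell_1$ edges, the matching in $V_2$ contributes $\ell_2/2$, each of the $\ell_2/2$ edges of $V_2$ requires a co-neighbour in $V_1$ forcing an additional $\ell_2$ edges between $V_1$ and $V_2$, and each of the $\ell_3$ vertices of $V_3$ is a co-neighbour of some edge and therefore has at least two edges into $V_1\cup V_2$. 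Using this together with $\ell_1,\ell_3\leq q\leq n$, the numerator of the estimate in Lemma~\ref{lem:1149pm24may21} can then be controlled by
\[
n^q\,\ell_1^{\ell_2/2}(3q)^{\ell_3} \;\leq\; n^q\cdot q^{\ell_2/2+\ell_3}\cdot 3^{\ell_3} \;\leq\; n^q\cdot n^{m-q}\cdot 3^q \;=\; 3^q\, n^m,
\]
which turns $n^q$ into $n^m$ at the cost of the harmless factor $3^q=\exp(q\log 3)$.

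For the denominator I would apply Stirling's inequality $k!\geq (k/\mathrm{e})^k$; after discarding the non-negative constants coming from $(3!)^{\ell_1/3}$ and $(2!)^{\ell_2/2}$, this yields
\[
\log\bigl[(3!)^{\ell_1/3}(\ell_1/3)!\,(2!)^{\ell_2/2}(\ell_2/2)!\,\ell_3!\bigr] \;\geq\; f(\ell_1,\ell_2,\ell_3) - q,
\]
where $f(\ell_1,\ell_2,\ell_3) = \tfrac{\ell_1}{3}\log(\tfrac{\ell_1}{3}) + \tfrac{\ell_2}{2}\log(\tfrac{\ell_2}{2}) + \ell_3\log\ell_3$. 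Lemma~\ref{lem:311pm23jun21} then provides the uniform lower bound $f \geq \tfrac13 q\log(\tfrac13 q) - O(q^{2/3}\log^2 q)$, and for $q$ sufficiently large the error term is dominated by $q$. Combining the two steps produces a per-configuration bound of the form $\exp\!\bigl(m\log n - \tfrac13 q\log(\tfrac13 q)+C'q\bigr)$; summing over the at most $(3q)^3$ configurations and enlarging the constant to $16q$ to absorb $\log 3$, the Stirling overhead, and the logarithmic correction $3\log(3q)$ completes the proof.

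The main obstacle is the first step, i.e.\ the edge-counting bound $m-q\geq \tfrac12\ell_2+\ell_3$. Without it, Lemma~\ref{lem:1149pm24may21} only supplies the prefactor $n^q$, and the factor $(3q)^{\ell_3}$ alone can overwhelm the target entropy saving $\exp(-\tfrac13 q\log(\tfrac13 q))$ in configurations where $\ell_3$ is of order $q$. The structural information from Lemma~\ref{lem:219pm23may21} --- the independence of $V_3$, the existence of co-neighbours in $V_1$ for every edge of the $V_2$-matching, and the fact that every $V_3$-vertex has degree at least two in $V_1\cup V_2$ --- is precisely what is needed to charge each $V_3$-vertex and each $V_2$-endpoint to one of the $m-q$ ``excess'' edges, thereby making room for the $n^{m-q}$ factor that absorbs $\ell_1^{\ell_2/2}(3q)^{\ell_3}$.
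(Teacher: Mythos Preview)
Your proposal is correct and follows essentially the same route as the paper: both arguments hinge on the edge-counting inequality $m\geq \ell_1+\tfrac32\ell_2+2\ell_3=q+\tfrac12\ell_2+\ell_3$ derived from the decomposition lemma, use it to upgrade $n^q$ to $n^m$ while absorbing $\ell_1^{\ell_2/2}(3q)^{\ell_3}$, apply Stirling to the factorials, invoke Lemma~\ref{lem:311pm23jun21} for the entropy minimum, and sum over at most $(3q)^3$ configurations. Your bookkeeping of the constants is in fact slightly cleaner than the paper's, but the substance is identical.
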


\begin{proof}
First note that the number of edges in a $q$-basic graph with configuration $(\ell_1,\ell_2,\ell_3)$ is at least 
	\begin{equation}
	\ell_1 +\tfrac32 \ell_2 + 2\ell_3.
	\end{equation}
To see why, observe that $\ell_1$ edges in $G[V_1]$ are contributed by $\ell_1/3$ disjoint triangles, and $\ell_2/2$ edges in $G[V_2]$. Since the endpoints of each edge in $V_2$ have a co-neighbour in $V_1$, they contribute another $\ell_2$ edges. Moreover, since each vertex in $V_3$ is either a co-neighbour of the endpoints of an edge in $V_1$ or a co-neighbour of the endpoints of an edge between $V_1$ and $V_2$, they contribute at least $2\ell_3$ edges. Therefore, via Lemma \ref{lem:1149pm24may21}, the number of $q$-basic graphs with $m$ edges is at most
	\begin{equation}
	\sum_{\ell_1+\frac{3}{2}\ell_2+2\ell_3\leq m}\tfrac{n^{q}} { (3!)^{\tfrac13 \ell_1} \times
	(\tfrac13 \ell_1)!\times (2!)^{{\tfrac12 \ell_2}}\times (\tfrac12 \ell_2)! \times \ell_1!} \times \ell_1^{\tfrac12 \ell_2} 
	\times \left(3q\right)^{\ell_3}.
	\end{equation}
Since $\ell_1+\frac{3}{2}\ell_2+2\ell_3 \leq m$ and $\ell_1+\ell_2+\ell_3=q$, we get that $q\leq m-\tfrac12 \ell_2-\ell_3$. Further, using $m\leq3q$, we get that there are at most $(3q)^3$ terms in the sum. Therefore we can bound the expression in the last display crudely by
	\begin{equation}
	(3q)^3\, \tfrac{\exp\left((m-\tfrac12 \ell_2-\ell_3)\log{n}+\tfrac12 \ell_2\log{\ell_1}+\ell_3\log{3q}\right)} 
	{(3!)^{\tfrac13 \ell_1} \times (\tfrac13 \ell_1)!\times (2!)^{{\tfrac12 \ell_2}}\times (\tfrac12 \ell_2)! \times \ell_3!}.
	\end{equation}
Using that $\ell_1, q\leq n$, we see that this can by bounded by
	\eqan{
	\label{eqn:419pm24may21}
	&(3q)^3 \tfrac{\exp\left(m\log{n}+\ell_3\log 3\right)} {(3!)^{\tfrac13 \ell_1} \times (\tfrac13 \ell_1)!
	\times (2!)^{{\tfrac12 \ell_2}}\times (\tfrac12 \ell_2)! \times \ell_3!} \\
	&\leq C(3q)^3\exp\left(m\log{n}-\left(\tfrac13 \ell_1+\tfrac12\right)\log{\tfrac13 \ell_1}
	+4\ell_1-\left(\tfrac12 \ell_2+\tfrac12\right)\log(\tfrac12 \ell_2)+{\ell_2}-\left(\ell_3+\tfrac12\right)\log{\ell_3}+3\ell_3\right)\nn
	}
for some constant $C$. We can simplify \eqref{eqn:419pm24may21} further by enlarging $C$, and using the bound $\ell_i\leq q$ for the smaller order terms. This gives the bound
	\begin{equation}
	\exp\left(m\log n-\tfrac13 \ell_1\log(\tfrac13 \ell_1)-\tfrac12 \ell_2\log(\tfrac12 \ell_2)-\ell_3\log{\ell_3}+15q\right),
	\end{equation}
where we use Stirling's approximation in \eqref{Stirling}. Now note that in \eqref{eqn:419pm24may21} the term $\frac13 \ell_1 \log(\tfrac13\ell_1)$ is asymptotically the smallest (see Lemma \ref{lem:311pm23jun21} for the formal proof), i.e., subject to  $\ell_1+\ell_2+\ell_3={q}$ the minimum value of $\tfrac13 \ell_1\log(\tfrac13 \ell_1)+\tfrac12 \ell_2\log(\tfrac12 \ell_2)+\ell_3\log{\ell_3}$ is bounded below by $\tfrac13 (q-q^{2/3}\log{q})\log(\tfrac13 (q-q^{2/3})\log q)$ for $q$ sufficiently large $q$. Therefore \eqref{eqn:419pm24may21} can be bounded from above by
	\begin{equation}
	C(3q)^3 \exp\left(m\log n - \tfrac13 q \log(\tfrac13 q)+16q\right),
	\end{equation}
as required.
\end{proof}

We next estimate the probability that $G_{n,p_n}$ contains a $q$-basic subgraph of $K_n$:

\begin{lem}{\bf [Upper bound on the probability to contain a $q$-basic subgraph]}
\label{lem:540pm24may21}
There exists a constant $C>0$ such that, for all sufficiently large integers $q$,
	\begin{equation}
	\mathbb{P}(G_{n,p_n} \text{\rm ~contains a }q\text{-basic graph of } K_n) 
	\leq \exp\left(-\tfrac13 q \log(\tfrac13 q)+Cq\right).
	\end{equation} 
\end{lem}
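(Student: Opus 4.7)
The plan is to estimate the probability via a union bound over all $q$-basic subgraphs of $K_n$, using the counting result in Lemma \ref{lem:459pm24may21} and the edge bounds from Lemmas \ref{lem:720pm22may21} and \ref{lem:603pm23may21}. For a fixed subgraph $G \subseteq K_n$ with $m$ edges, the probability that $G \subseteq G_{n,p_n}$ is exactly $p_n^m = (\lambda/n)^m$, so writing $N_{q,m}$ for the number of $q$-basic subgraphs of $K_n$ with exactly $m$ edges, I would first bound
\begin{equation}
\mathbb{P}(G_{n,p_n}\text{ contains a }q\text{-basic graph of }K_n)\leq \sum_{m} N_{q,m}\,(\lambda/n)^m.
\end{equation}

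By Lemmas \ref{lem:720pm22may21} and \ref{lem:603pm23may21}, the only nonzero terms in this sum correspond to $q\leq m\leq 3q$, so there are at most $2q+1$ contributing values of $m$. Next I would insert the bound from Lemma \ref{lem:459pm24may21}, namely $N_{q,m}\leq C(3q)^3 \exp(m\log n - \tfrac13 q\log(\tfrac13 q)+16q)$, to obtain
\begin{equation}
\sum_{m=q}^{3q} C(3q)^3 \exp\left(m\log n-\tfrac13 q\log(\tfrac13 q)+16q\right)(\lambda/n)^m
= \sum_{m=q}^{3q} C(3q)^3 \exp\left(-\tfrac13 q\log(\tfrac13 q)+16q+m\log\lambda\right).
\end{equation}
The crucial observation here is that the $n^m$ produced by the counting estimate cancels against the $n^{-m}$ arising from $p_n^m$, leaving an expression in which $n$ no longer appears.

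To finish, I would use that $q\leq m\leq 3q$ to bound $|m\log\lambda|\leq 3q|\log\lambda|$, so that the exponent is at most $-\tfrac13 q\log(\tfrac13 q)+(16+3|\log\lambda|)q$. Since the polynomial prefactor $(2q+1)\cdot C(3q)^3$ contributes only $O(\log q)$ to the exponent, it can be absorbed into a linear-in-$q$ correction, giving the desired
\begin{equation}
\mathbb{P}(G_{n,p_n}\text{ contains a }q\text{-basic graph of }K_n)\leq \exp\left(-\tfrac13 q\log(\tfrac13 q)+Cq\right)
\end{equation}
for a suitable constant $C=C(\lambda)$ and all sufficiently large $q$. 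There is no real obstacle here: all the combinatorial work has already been done in Lemma \ref{lem:459pm24may21}, and the only care needed is in tracking the $n^m$ cancellation and confirming that every remaining factor (the number of terms, the polynomial in $q$, and the $m\log\lambda$ term) is of order at most $q$ in the exponent.
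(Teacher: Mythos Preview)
Your proposal is correct and follows essentially the same approach as the paper: union bound over $q$-basic subgraphs stratified by edge count $m\in\{q,\ldots,3q\}$ (via Lemmas \ref{lem:720pm22may21} and \ref{lem:603pm23may21}), insertion of the counting bound from Lemma \ref{lem:459pm24may21}, cancellation of $n^m$ against $p_n^m$, and absorption of the remaining $m\log\lambda$ and polynomial-in-$q$ factors into the $Cq$ term. If anything, your write-up is slightly more careful than the paper's (which writes some equalities that are really inequalities).
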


\begin{proof}
Using Lemmas \ref{lem:720pm22may21}, \ref{lem:603pm23may21} and \ref{lem:459pm24may21}, we get
	\begin{equation}
	\begin{aligned}
	&\mathbb{P}(G_{n,p_n} \text{ contains a } q\text{-basic subgraph of } K_n)\\ 
	&= \sum_{m=q}^{3q}\mathbb{P}(G_{n,p_n} \text{ contains a }q\text{-basic subgraph of } K_n \text{ with } m\text{ edges}) \\
	&= \sum_{m= q}^{3q} p_n^{m}\, \#\{q\text{-basic subgraphs of } K_n \text{ with } m \text{ edges}\} \\
	&= C(3q)^2 \sum_{m= q}^{3q} \exp\left(m\log\lambda -m\log n +m\log n-\tfrac13 q\log(\tfrac13 q)+16q\right) \\
	&\leq \exp\left(-\tfrac13q \log(\tfrac13q)+Cq\right)
	\end{aligned}
	\end{equation}
for some constant $C>0$.
\end{proof}

We are now ready to estimate $\mathbb{P}(V_{\sss T}(G_{n,p_n})\geq k_n)$ as in Theorem \ref{thm:520pm10sep21}:

\begin{proof}[Proof of Theorem \ref{thm:520pm10sep21}] Without loss of generality, we may take $a=1$ in the proof below. Using Lemma \ref{lem:706pm24may21} and the union bound, we get
	\begin{equation}
	\label{eqn:539pm24may21}
	\begin{aligned}
	\mathbb{P}(V_{\sss T}(G_{n,p_n})\geq k_n) 
	&= \sum_{q= k_n}^{n} \mathbb{P}(V_{\sss T}(G_{n,p_n})= q) \\
	&\leq  \sum_{q= k_n}^{n}\mathbb{P}(G_{n,p_n} \text{ contains a } q\text{-basic subgraph of } K_n).
	\end{aligned}
	\end{equation}
Using Lemma \ref{lem:540pm24may21}, we bound \eqref{eqn:539pm24may21} from above by
	\begin{equation}
	\sum_{q= k_n}^{n} \exp\left(-\tfrac13 q\log(\tfrac13 q)+Cq\right)
	\leq n\exp\left(-\tfrac13k_n\log(\tfrac13k_n)+Ck_n\right),
	\end{equation}
as required.
\end{proof}


\section{Phase transition in exponential random graphs: Proof of Corollary \ref{cor-ERRG-vertices-in-triangles}}
\label{S8}

In this section, we apply Theorem \ref{thm-many-vertices-in-triangles} to analyze the exponential random graph model $\mathbb{P}_{\beta}$ defined in \eqref{ERRG-vertices} by proving Corollary \ref{cor-ERRG-vertices-in-triangles}.

\proof[Proof of Corollary \ref{cor-ERRG-vertices-in-triangles}] We rely on Theorem \ref{thm-many-vertices-in-triangles} and simple bounds. We first note that, by Jensen,
        \eqn{\label{eqn:127pm05dec21}
	\log Z_n(\beta\log n)=\log\mathbb{E}[\mathrm{e}^{\betanV V_{\sss T}}]\geq \mathbb{E}[\betanV V_{\sss T}]=o(n\log{n}),
	}
since $\mathbb{P}(1 \text{ is in a triangle})=o(1)$. This proves the lower bound for $\beta\leq \tfrac{1}{3}$. Further, by Theorem \ref{thm-many-vertices-in-triangles} we can lower bound
\eqn{
	\label{eqn:511pm03nov21}
	\log Z_n(\beta\log n)\geq \log\mathbb{E}[\mathrm{e}^{\betanV V_{\sss T}}\mathbbm{1}_{\{V_{\sss T}=n\}}]
	=\beta n\log{n} +\log{\mathbb{P}(V_{\sss T}\geq n)}\geq (1+o(1)) (\beta-\tfrac{1}{3})n\log{n}.
}
 This proves the lower bound $\beta>\tfrac{1}{3},$ and completes the proof of the lower bound.

For the upper bound, we estimate
\eqn{
	\label{eqn:1209pm01nov21}
	Z_n(\beta\log n)=\sum_{k=0}^n \mathbb{P}(V_{\sss T}=k)\mathrm{e}^{\betanV k}
	\leq n\max_{k\in [n]} \mathbb{P}(V_{\sss T}\geq k)\mathrm{e}^{\betanV k}.
}
Using Theorem \ref{thm-many-vertices-in-triangles}, we get that for $\beta\leq \tfrac{1}{3}$ the maximum is achieved for $k=k_n=o(n)$, while for $\beta>\tfrac{1}{3}$ the maximum is achieved for $k=n$. To see this in more detail, note that when $k=k_n=o(n)$, we can simply bound $\mathbb{P}(V_{\sss T}\geq k_n)\mathrm{e}^{\betanV k_n} \leq \mathrm{e}^{\beta k_n\log{n} }$, while when $k_n=cn$ for some constant $c\in(0,1]$ we can use Theorem \ref{thm-many-vertices-in-triangles}.

We next proceed to prove the second part of the corollary in two cases:

\medskip\noindent
\textbf{Case I:} $\beta<\tfrac13$. By \eqref{eqn:127pm05dec21} and using the definition of $\mathbb{P}_{\beta \log{n}}$, we get
\begin{equation}
	\label{eqn:354pm03nov21}
	\mathbb{P}_{\beta \log{n}}\left(\frac{V_{\sss T}}{n}\geq \varepsilon\right)
	=\frac{1}{Z_n(\beta \log n)} \mathbb{E}\left(\mathbbm{1}_{\{{V_{\sss T}}\geq \varepsilon n\}} 
	\mathrm{e}^{\beta \log{n}V_{\sss T}}\right)\leq  \frac{1}{Z_n(\beta \log n)} 
	\sum_{k\geq \varepsilon n} 
	\mathrm{e}^{\beta k\log{n}}\mathbb{P}(V_{\sss T}=k).
\end{equation}
Using Theorem \ref{thm-many-vertices-in-triangles}, we can bound
\begin{equation*}
\sum_{k\geq \varepsilon n} \mathrm{e}^{\beta k\log{n}}\mathbb{P}(V_{\sss T}=k) 
\leq n \max_{k\geq \varepsilon n}\exp{\left(\beta k\log{n}-\tfrac13 k\log(\tfrac13k)+Ck\right)}.
\end{equation*}
Since $\beta<\tfrac13$, the maximum is attained at $\varepsilon n$ and therefore, using  \eqref{eqn:127pm05dec21}, we get that \eqref{eqn:354pm03nov21} is bounded above by
\begin{equation}
	\mathbb{P}_{\beta \log{n}}\left(\frac{V_{\sss T}}{n}\geq \varepsilon\right) \leq  
	\mathrm{e}^{o(n\log{n})} 
	\mathrm{e}^{\beta \varepsilon n\log{n} - \frac{ \varepsilon n\log{n}}{3}+Cn},
\end{equation}
for some constant $C>0$. Since $\beta<\tfrac13$, the right-hand side tends to zero as $n \rightarrow \infty$.

\medskip\noindent
\textbf{Case II:} $\beta>\tfrac13$. We can write
\begin{equation}
	\label{eqn:523pm03nov21}
	\begin{aligned}
	\mathbb{P}_{\beta \log{n}}\left(\frac{V_{\sss T}}{n}<1- \varepsilon\right) &= \frac{1}{Z_n(\beta \log n)} 
	\mathbb{E}\left(\mathbbm{1}_{\{{V_{\sss T}}<(1- \varepsilon) n\}} \mathrm{e}^{\beta \log{n}V_{\sss T}}\right)\\ 
	&= \frac{1}{Z_n(\beta \log n)} \sum_{q< (1-\varepsilon) n} \mathrm{e}^{\beta k\log{n}}\mathbb{P}(V_{\sss T}=k).
	\end{aligned}
\end{equation}
Observe that
\begin{equation}
	  \sum_{q< (1-\varepsilon) n} \mathrm{e}^{\beta k\log{n}}\mathbb{P}(V_{\sss T}=k) 
	  \leq n \max_{k<(1-\varepsilon) n}\exp{\left(\beta k\log{n}-\tfrac13 k\log(\tfrac13k)+Ck\right)}. 
\end{equation}
Since $\beta>\tfrac13$, the maximum is attained at $k=(1-\varepsilon) n$. Using \eqref{eqn:511pm03nov21} we get 
	\eqn{
	Z_n(\beta \log n) \geq \exp((\beta-\tfrac13) n\log{n}(1+o(1))),
	}
and therefore we can upper bound \eqref{eqn:523pm03nov21} by
\begin{equation}
	\begin{aligned}
		\mathbb{P}_{\beta \log{n}}\left(\frac{V_{\sss T}}{n}<1- \varepsilon\right)
		& \leq \exp{\left(-(\beta-\tfrac13)n\log{n}(1+o(1))+\beta(1-\varepsilon)n\log{n}-(1-\varepsilon)\frac{n}{3}\log{n} +Cn\right)} \\
		&=\exp{\left(-(\beta-\tfrac13)o(1)n\log{n}-\beta\varepsilon n\log{n}+\varepsilon \frac{n}{3}\log{n}+Cn\right)}
	\end{aligned}
\end{equation}
for some constant $C>0$. Since $\beta>\tfrac13$, the right-hand side tends to zero as $n\rightarrow \infty$.
\qed

\medskip
\paragraph{\bf Acknowledgments}
The work in this paper was supported by the Netherlands Organisation for Scientific Research (NWO) through Gravitation-grant NETWORKS-024.002.003. The authors thank Twan Koperberg for comments on the polynomial representation of $V_{\sss T}$ in terms of the adjacency matrix, and Charles Bordenave for pointing us to reference \cite{backhausz2021typicality} and including Remark 4.12 in that reference. We would like to thank Jean-Pierre Eckmann, Svante Janson, and Yufei Zhao for their comments and pointing out several relevant references. We would also like to thank I-Hsun Chen for pointing out a minor oversight in the proof of Theorem 2.1 in the previous version of this article.


\bibliographystyle{plain}
\bibliography{references.bib}


\end{document}